\let\mathcal\mathscr
\def\sha{\mathcal{A}}
\def\shb{\mathcal{B}}
\def\shd{\mathcal{D}}
\def\shf{\mathcal{F}}
\def\shh{\mathcal{H}}
\def\shl{\mathcal{L}}
\def\shm{\mathcal{M}}
\def\sho{\mathcal{O}}
\def\shn{\mathcal{N}}
\def\shr{\mathcal{R}}
\def\sht{\mathcal{T}}
\def\shv{\mathcal{V}}
\def\shx{\mathcal{X}}
\newcommand{\C}{\mathbb{C}}
\newcommand{\R}{\mathbb{R}}
\newcommand{\Z}{\mathbb{Z}}
\newcommand{\bD}{\boldsymbol{D}}
\newcommand{\Rhom}{R\shh\!om}
\newcommand{\rb}{\mathrm{b}}
\newcommand{\lc}{\mathrm{lc}}
\newcommand{\coh}{\mathrm{coh}}
\newcommand{\hol}{\mathrm{hol}}
\newcommand{\lccoh}{\mathrm{lc\,coh}}
\newcommand{\opp}{\mathrm{opp}}
\newcommand{\cc}{{\C\textup{-c}}}
\newcommand{\rc}{{\R\textup{-c}}}
\newcommand{\wcc}{{\textup{w-}\C\textup{-c}}}
\newcommand{\wrc}{{\textup{w-}\R\textup{-c}}}
\DeclareMathOperator{\Char}{Char}
\DeclareMathOperator{\codim}{codim}
\DeclareMathOperator{\pD}{{}^\mathrm{p}\mathsf{D}}
\DeclareMathOperator{\rD}{\mathsf{D}}
\DeclareMathOperator{\DR}{DR}
\DeclareMathOperator{\pDR}{{}^\mathrm{p}DR}
\DeclareMathOperator{\Hom}{Hom}
\DeclareMathOperator{\id}{id}
\DeclareMathOperator{\Sol}{Sol}
\DeclareMathOperator{\pSol}{{}^\mathrm{p}Sol}
\DeclareMathOperator{\supp}{Supp}
\let\ov\overline
\let\epsilon\varepsilon
\let\setminus\smallsetminus
\let\leq\leqslant
\let\geq\geqslant
\newcommand{\sbullet}{{\scriptscriptstyle\bullet}}
\theoremstyle{plain}
\newtheorem{theorem}{Theorem}[section]
\newtheorem{proposition}[theorem]{Proposition}
\newtheorem{lemma}[theorem]{Lemma}
\newtheorem{corollary}[theorem]{Corollary}
\theoremstyle{definition}
\newtheorem{definition}[theorem]{Definition}
\newtheorem{notation}[theorem]{Notation}
\newtheorem{example}[theorem]{Example}
\newtheorem{remark}[theorem]{Remark}
\newtheorem{properties}[theorem]{Properties}
\newcommand{\RedefinitSymbole}[1]{%
\expandafter\let\csname old\string#1\endcsname=#1
\let#1=\relax
\newcommand{#1}{\csname old\string#1\endcsname\,}%
}
\begin{document}
\author{Teresa Monteiro Fernandes}
\address[T. Monteiro Fernandes]{Centro de Matem\'atica e Aplica\c c\~oes Fundamentais da Universidade
de Lisboa\\ Complexo 2\\ 2 Avenida Prof. Gama Pinto\\ 1699 Lisboa\\
Portugal}
\email{tmf@ptmat.fc.ul.pt}

\author{Claude Sabbah}
\address[C. Sabbah]{Centre de Math\'ematiques Laurent Schwartz \\
UMR CNRS 7640\\
\'Ecole Polytechnique\\91128 Palaiseau cedex\\ France}
\email{sabbah@math.polytechnique.fr}

\thanks{The research of T.M.F. was partially supported
by Supported by Funda\c c\~ao para a Ci\^encia e a Tecnologia, PEst OE/MAT/UI0209/2011.}
\thanks{The research of C.S. was supported by the grant ANR-08-BLAN-0317-01 of the Agence nationale de la recherche.}

\title{On the de Rham complex of mixed~twistor~$\mathcal{D}$-modules}
\date{11/02/2012}
\begin{abstract}
Given a complex manifold $S$, we introduce for each complex manifold $X$ a t-structure on the bounded derived category of $\mathbb{C}$\nobreakdash-constructible complexes of $\mathcal{O}_S$-modules on $X\times S$. We prove that the de~Rham complex of a holonomic $\mathcal{D}_{X\times S/S}$-module which is $\mathcal{O}_S$-flat as well as its dual object is perverse relatively to this t-structure. This result applies to mixed twistor $\mathcal{D}$-modules.
\end{abstract}

\subjclass[2010]{14F10, 32C38, 32S40, 32S60, 35Nxx, 58J10}

\keywords{Holonomic relative $D$-module, relative constructible sheaf, relative perverse sheaf, mixed twistor $D$-module}

\maketitle
\renewcommand{\baselinestretch}{1.1}\normalfont

\section{Introduction}
Given a vector bundle $V$ of rank $d\geq1$ with an integrable connection $\nabla:V\to \Omega^1_X\otimes V$ on a complex manifold $X$ of complex dimension $n$, the sheaf of horizontal sections $V^\nabla=\ker\nabla$ is a locally constant sheaf of $d$\nobreakdash-dimensional $\C$-vector spaces, and is the only nonzero cohomology sheaf of the de~Rham complex $\DR_X(V,\nabla)=(\Omega_X^\sbullet\otimes V,\nabla)$. Assume moreover that $(V,\nabla)$ is equipped with a harmonic metric in the sense of \cite[p.~16]{Simpson92}. The twistor construction of \cite{Simpson97} produces then a holomorphic bundle $\shv$ on the product space $\shx=X\times\C$, where the factor $\C$ has coordinate $z$, together with a holomorphic flat $z$-connection. By restricting to $\shx^*:=X\times\C^*$, giving such a $z$-connection on $\shv^*:=\shv_{|\shx^*}$ is equivalent to giving a flat relative connection~$\nabla$ with respect to the projection $p:\shx^*\to\C^*$. Similarly, the relative de~Rham complex $\DR_{\shx^*/\C^*}(\shv^*,\nabla)$ has cohomology in degree zero at most, and $(\shv^*)^\nabla:=\ker\nabla$ is a locally constant sheaf of locally free $p^{-1}\sho_{\C^*}$\nobreakdash-modules of rank $d$.

Holonomic $\shd_X$-modules generalize the notion of a holomorphic bundle with flat connection to objects having (possibly wild) singularities, and a well-known theorem of Kashiwara \cite{Kashiwara75} shows that the solution complex of such a holonomic $\shd_X$-module has $\C$-constructible cohomology, from which one can deduce that the de~Rham complex is of the same kind and more precisely that both are $\C$-perverse sheaves on $X$ up to a shift by $\dim X$.

The notion of a holonomic $\shd_X$-module with a harmonic metric has been formalized in \cite{Sabbah05} and \cite{Mochizuki07} under the name of pure twistor $\shd$-module (this generalizes holonomic $\shd_X$-modules with regular singularities), and then in \cite{Sabbah09} and \cite{Mochizuki08} under the name of wild twistor $\shd$-modules (this takes into account arbitrary irregular singularities). More recently, Mochizuki \cite{Mochizuki11} has fully developed the notion of a mixed (possibly wild) twistor $\shd$-module. When restricted to $\shx^*$, such an object contains in its definition two holonomic $\shd_{\shx^*/\C^*}$-modules, and we say that both underlie a mixed twistor $\shd$-module

The main result of this article concerns the de~Rham complex and the solution complex of such objects.

\begin{theorem}\label{th:main}
The de~Rham complex and the solution complex of a $\shd_{\shx^*/\C^*}$-module underlying a mixed twistor $\shd$-module are perverse sheaves of $p^{-1}\sho_{\C^*}$-modules (up to a shift by $\dim X$).
\end{theorem}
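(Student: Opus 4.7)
Following the roadmap of the abstract, my plan splits into three parts: first, setting up a relative perverse $t$-structure; second, proving a general perversity theorem for holonomic $\shd_{\shx^*/\C^*}$-modules satisfying a two-sided flatness condition; third, verifying this condition for any module underlying a mixed twistor $\shd$-module.

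For the first step, I would introduce, on the full subcategory $\rD^{\rb}_{\cc}(p^{-1}\sho_{\C^*})$ of complexes whose cohomology is $\C$-constructible along a stratification of $X$ and, on each stratum, locally $p^{-1}\sho_{\C^*}$-free of finite rank, a $t$-structure whose support and cosupport conditions are measured by the dimension of the $X$-projection of the support rather than by the total dimension (thus treating the $\C^*$-factor as a ``base'' and not contributing to the perversity count). The verification of the gluing axioms is a local computation parallel to the classical Beilinson--Bernstein--Deligne argument, with $p^{-1}\sho_{\C^*}$ playing the role of the base field; the key point is that every object should be shown to be locally of the form $\sho_{\C^*}\otimes_\C F^\sbullet$ on small enough open sets, reducing to the known case.

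For the second step, assume $\shm$ is a coherent holonomic $\shd_{\shx^*/\C^*}$-module such that both $\shm$ and its dual $\bD\shm$ are $\sho_{\C^*}$-flat. I would prove that $\DR_{\shx^*/\C^*}(\shm)[\dim X]$ lies in the heart of the $t$-structure above. The ingredients are: a relative Kashiwara constructibility theorem for $\DR_{\shx^*/\C^*}(\shm)$, which under $\sho_{\C^*}$-flatness of $\shm$ reduces essentially fibrewise to Kashiwara's classical result; the estimate of the relative characteristic variety, producing the support bound; and the duality identification $\pSol(\shm)\simeq \pDR(\bD\shm)$, which transports the support bound known for $\pDR(\bD\shm)$ into the cosupport bound needed for $\pDR\shm$. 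It is here that the flatness of $\bD\shm$ is essential, since without it the dual need not be concentrated in a single cohomological degree nor compatible with fibrewise reduction. The perversity of $\pSol\shm$ then follows by applying the same argument to $\bD\shm$.

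For the third step, I would invoke the strictness built into the definition of a mixed twistor $\shd$-module: $\shm$ has no $\sho_\C$-torsion, hence is $\sho_{\C^*}$-flat on $\shx^*$. The non-trivial input is that the dual of a mixed twistor $\shd$-module is again a mixed twistor $\shd$-module, hence strict: this stability under duality is a central result of \cite{Mochizuki11}, and it furnishes the required $\sho_{\C^*}$-flatness of $\bD\shm$. The claim then follows from step~two.

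The main obstacle will be the combined technical content of steps one and two: setting up a genuine relative duality on $\C$-constructible complexes of $p^{-1}\sho_{\C^*}$-modules that is compatible, via the de~Rham and solution functors, with the duality $\bD$ on holonomic relative $\shd$-modules, and proving the constructibility and support estimates over a sheaf of rings that is neither a field nor pointwise noetherian in a helpful way. Once these foundations are in place, the reduction to mixed twistor $\shd$-modules via Mochizuki's strictness and duality stability is straightforward.
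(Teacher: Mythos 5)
Your overall architecture matches the paper's: a relative perverse $t$-structure on $\rD^\rb_\cc(p^{-1}\sho_{\C^*})$ with perversity measured along $X$, a general perversity theorem for strict holonomic $\shd_{\shx^*/\C^*}$-modules with strict dual, and Mochizuki's strictness-and-duality results closing the loop. There is, however, a genuine gap at the heart of your step two. You treat the passage from the support bound for $\pDR\bD\shm$ to the cosupport bound for $\pDR\shm$ as an automatic transport via $\pSol\shm\simeq\pDR\bD\shm\simeq\bD\pDR\shm$. But the $t$-structure $(\pD^{\leq0}_\cc,\pD^{\geq0}_\cc)$ is \emph{not} interchanged by Verdier duality once $\dim S\geq1$: already over the base, the standard $t$-structure on $\rD^\rb_\coh(\sho_S)$ is not self-dual under $\Rhom_{\sho_S}(-,\sho_S)$, because coherent $\sho_S$-modules with torsion dualize into positive degrees. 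The paper gets around this with an asymmetric lemma: if $\bD G\in\rD^{\rb,\leq0}_\coh(\sho_S)$ then $G\in\rD^{\rb,\geq0}_\coh(\sho_S)$, the converse being false. It is precisely this one-directional statement---not duality compatibility of the $t$-structure---that yields ``$F,\bD F\in\pD^{\leq0}_\cc$ implies both perverse.'' Without it your step two does not close. Moreover the reason you offer for needing $\bD\shm$ strict (that the dual be concentrated in a single degree) is not the operative one: the strictness of $\bD\shm$ is used to run the Nakayama/strictness induction producing the \emph{support} bound for $\pDR\bD\shm$, which is then converted to the cosupport bound for $\pDR\shm$ by the lemma above.

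Two secondary points. Your building blocks on strata must be locally $p^{-1}\sho_{\C^*}$-\emph{coherent}, not locally free of finite rank: the paper's closing example produces on the deepest stratum a skyscraper sheaf supported on $\{\varphi(s)\in\Z\}$, and the possibility of torsion is exactly why the duality asymmetry above is unavoidable. Also, $\sho_{\C^*}$-coherence of the cohomology of $\DR\shm$ at a point of $X$ is not a fibrewise-over-$S$ condition, so ``reducing fibrewise to Kashiwara's theorem'' does not deliver it; the paper instead derives it from the Schapira--Schneiders coherence theorem for relative elliptic pairs.
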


In Section \ref{sec:constructibility}, we define the notion of relative constructibility and perversity. This applies to the more general setting where $p:\shx^*\to\C^*$ is replaced by a projection $p_X:\shx=X\times S\to S$, where $S$ is any complex manifold. We usually set $p=p_X$ when~$X$ is fixed. On the other hand, we call \emph{holonomic} any coherent $\shd_{X\times S/S}$-module whose relative characteristic variety in $T^*(X\times\nobreak S/S)=(T^*X)\times S$ is contained in a variety $\Lambda\times S$, where $\Lambda$ is a conic Lagrangian variety in $T^*X$. We say that a $\shd_{X\times S/S}$-module is \emph{strict} if it is $p^{-1}\sho_S$-flat.

\begin{theorem}\label{th:main2}
The de~Rham complex and the solution complex of a strict holonomic $\shd_{X\times S/S}$-module whose dual is also strict are perverse sheaves of $p^{-1}\sho_S$-modules (up to a shift by $\dim X$).
\end{theorem}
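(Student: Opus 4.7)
\medskip
\noindent\textbf{Proof plan for Theorem~\ref{th:main2}.}

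The strategy is to reduce the statement about $\pSol$ to the one about $\pDR$ and then verify perversity fiberwise over $S$. The $\shd_{\shx/S}$-module $\bD\shm$ is again holonomic in the relative sense, and one expects a relative Kashiwara-type isomorphism $\pSol(\shm)\simeq \pDR(\bD\shm)[-\dim X]$ (after the appropriate shift) obtained exactly as in the absolute case from $\pSol(\shm)=\Rhom_{\shd_{\shx/S}}(\shm,\sho_{\shx})$. Since by hypothesis $\bD\shm$ is also strict, the $\pSol$ statement for $\shm$ follows formally from the $\pDR$ statement applied to $\bD\shm$. Hence the whole theorem reduces to proving that $\pDR(\shm)[\dim X]$ lies in the heart of the relative $t$-structure constructed in Section~\ref{sec:constructibility}, under the two hypotheses that $\shm$ and $\bD\shm$ are strict and holonomic.

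The main tool is fiberwise restriction along $i_s:X\times\{s\}\hookrightarrow \shx$ for $s\in S$. Strictness of $\shm$ means $\shm$ is $p^{-1}\sho_S$\nobreakdash-flat, so the derived pullback $Li_s^*\shm$ reduces to $i_s^*\shm$ in degree zero; the assumption on the relative characteristic variety then forces $i_s^*\shm$ to be a holonomic $\shd_X$-module in the classical sense, with characteristic variety contained in $\Lambda$. The same holds for $\bD\shm$ by the strictness assumption on the dual. I would then establish the base-change compatibility $i_s^{-1}\DR_{\shx/S}(\shm)\simeq \DR_X(i_s^*\shm)$, and similarly for $\bD\shm$; this is the relative analogue of the fact that $\DR$ is computed by a Koszul-type complex of flat $\sho_{\shx}$-modules, so that flatness of $\shm$ over $p^{-1}\sho_S$ propagates to an exact fiberwise comparison. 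The classical theorem of Kashiwara~\cite{Kashiwara75} applied to $i_s^*\shm$ and to $i_s^*\bD\shm$ then yields that $\DR_X(i_s^*\shm)[\dim X]$ and $\DR_X(i_s^*\bD\shm)[\dim X]$ are $\C$-perverse sheaves on $X$ for every $s\in S$.

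It remains to translate this pointwise conclusion into membership in the heart of the relative $t$-structure of Section~\ref{sec:constructibility}. Here the design of that $t$-structure is critical: it should be characterized, in the presence of strictness, by the requirement that the cohomology sheaves satisfy support conditions slicewise, together with a symmetric condition dually controlled after applying relative Verdier duality. With this characterization in hand, the fiberwise Kashiwara theorem for $\shm$ delivers one set of support estimates, and the fiberwise Kashiwara theorem for $\bD\shm$, combined with the duality isomorphism exchanging $\pDR(\shm)$ and $\pDR(\bD\shm)$, delivers the dual estimates; together they place $\pDR(\shm)[\dim X]$ in the heart.

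The main obstacle I anticipate is precisely this last point, namely the two compatibilities that power the reduction to fibers: the commutation of $\DR_{\shx/S}$ with the restriction $Li_s^{-1}$, and the compatibility of the relative $t$-structure with Verdier-type duality. Both ultimately rest on using strictness to turn derived operations into underived ones, but they have to be proved in the framework of $p^{-1}\sho_S$-linear constructible sheaves rather than ordinary $\C$-constructible sheaves. Without the strictness of the dual $\bD\shm$, only one of the two support conditions would be accessible, which is precisely why the hypothesis is made symmetric; the non-trivial content of the theorem is that, under this symmetry, the classical Kashiwara argument can be carried across the base $S$ without losing any cohomological information.
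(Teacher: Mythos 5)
Your overall strategy — reduce $\pSol$ to $\pDR\bD$, use base change along $i_{s_o}$ and classical Kashiwara to control the support condition, and invoke duality to get the cosupport condition from the hypothesis on $\bD\shm$ — is the same strategy the paper follows. However, two steps that you flag as ``obstacles'' are in fact genuine gaps that cannot be filled with just the ingredients you list.

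First, fiberwise vanishing of $Li^*_{s_o}\pDR\shm$ in degrees $>-\dim X_\alpha$ does not by itself give the relative support condition. Since $Li^*_{s_o}$ does not commute with taking cohomology sheaves, you must pin down the top nonvanishing degree $k$ of $\shh^ji_x^{-1}\pDR\shm$ and argue via Nakayama that it cannot exceed $-\dim X_\alpha$; but Nakayama requires knowing that $\shh^ki_x^{-1}\pDR\shm$ is $\sho_S$-\emph{coherent}. This coherence is not formal: it comes from the Schapira--Schneiders relative elliptic pair theorem (the paper's Theorem~\ref{T:1}, proved via \cite[Prop.\,4.1]{Sch-Sch94}), applied after writing $i_x^{-1}\DR\shm$ as a proper direct image over a small closed ball (Lemma~\ref{L:2.7}). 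Without this input, the Nakayama step cannot close, and your ``fiberwise Kashiwara plus strictness'' reduction does not conclude the support condition. The paper also organizes the Nakayama argument as an induction on $\dim S$ using hyperplanes $S'\subset S$ rather than pointwise restrictions, which is what makes the maximal-degree trick run cleanly through the short exact sequence $0\to\shm\xrightarrow{s}\shm\to i_{S'}^*\shm\to0$ furnished by strictness.

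Second, and more seriously, your sketch treats the cosupport condition as ``dually controlled'' as if relative Verdier duality interchanged $\pD^{\leq0}_\cc$ and $\pD^{\geq0}_\cc$. It does not when $\dim S\geq1$: the standard $t$-structure on $\rD^\rb_\coh(\sho_S)$ is not exchanged by $\Rhom_{\sho_S}(-,\sho_S)$, as the paper explicitly warns before Lemma~\ref{L:221}. What actually saves the argument is the one-sided statement (Lemma~\ref{L:221}) that if $\bD G\in\rD^{\rb,\leq0}_\coh(\sho_S)$ then $G\in\rD^{\rb,\geq0}_\coh(\sho_S)$, proved using finite free resolutions over the regular local ring $\sho_{S,s}$; this feeds into Proposition~\ref{P:227}, which is precisely the mechanism converting the \emph{two} support conditions (for $\pDR\shm$ and for $\bD\pDR\shm\simeq\pDR\bD\shm$) into perversity. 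Without isolating something like Lemma~\ref{L:221}, the duality step in your outline is not available, because the naive self-duality of the relative $t$-structure on which it implicitly relies is false. You also need the functorial isomorphism $\bD\pDR\shm\simeq\pDR\bD\shm$ (Theorem~\ref{T:3.4}), which itself must be proved — your plan correctly suggests checking it after applying $Li^*_{s_o}$ and invoking Proposition~\ref{P:91}, so that part of your outline is sound.
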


A $\shd_{\shx^*/\C^*}$-module $\shm$ underlying a mixed twistor $\shd$-module is strict and holonomic (see~\cite{Mochizuki11}). Moreover, Mochizuki has defined a duality functor on the category of mixed twistor $\shd$-modules, proving in particular that the dual of $\shm$ as a $\shd_{\shx^*/\C^*}$-module is also strict holonomic. Therefore, these results together with Theorem \ref{th:main2} imply Theorem \ref{th:main}.

Note that, while our definition of perverse objects in the bounded derived category $\rD^\rb(p^{-1}\sho_S)$ intends to supply a notion of holomorphic family of perverse sheaves, we are not able, in the case of twistor $\shd$-modules, to extend this notion to the case when the parameter $z\in\C^*=S$ also achieves the value zero, and to define a perversity property in the Dolbeault setting of \cite{Simpson92} for the associated Higgs module.

\section{Relative constructibility in the case of a projection}\label{sec:constructibility}

We keep the setting as above, but $X$ is only assumed to be a real analytic manifold. Given a real analytic map $f:Y\to X$ between real analytic manifolds, we will denote by $f_S$ (or $f$ if the context is clear) the map $f\times\id_S:Y\times S\to X\times S$.

\subsection{Sheaves of $\C$-vector spaces and of $p^{-1}\sho_S$-modules}\label{S:0}

Let $f:Y\!\to\! X$ be such a map. There are functors $f^{-1},f^!,Rf_*,Rf_!$ between $\rD^\rb(\C_{X\times S})$ and $\rD^\rb(\C_{Y\times S})$, and functors $f_S^{-1},f_S^!,Rf_{S,*},Rf_{S,!}$ between $\rD^\rb(p_X^{-1}\sho_S)$ and $\rD^\rb(p_Y^{-1}\sho_S)$. These functors correspond pairwise through the forgetful functor $\rD^\rb(p_X^{-1}\sho_S)\to \rD^\rb(\C_{X\times S})$. Indeed, this is clear except for $f_S^!$ and $f^!$. To check it, one decomposes $f$ as a closed immersion and a projection. In the first case, the compatibility follows from the fact that both are equal to $f^{-1}R\Gamma_{f(X)}$ (see \cite[Prop.\,3.1.12]{K-S90}) and for the case of a projection one uses \cite[Prop.\,3.1.11 \& 3.3.2]{K-S90}. We note also that the Poincaré-Verdier duality theorem \cite[Prop.\,3.1.10]{K-S90} holds on $\rD^\rb(p^{-1}\sho_S)$ (see \cite[Rem.\,3.1.6(i)]{K-S90}). From now on, we will write $f^{-1}$, etc. instead of $f_S^{-1}$, etc.

The ring $p_X^{-1}\sho_S$ is Noetherian, hence coherent (see \cite[Prop.\,A.14]{Kashiwara03}). For each $s_o\in S$ let us denote by $\mathfrak{m}_{s_o}$ the ideal of sections of $\sho_S$ vanishing at~$s_o$ and by $i^\star_{s_o}$ the functor
\begin{align*}
\text{Mod}(p^{-1}_X \sho_{S})& \longmapsto \text{Mod}(\C_X)\\
F&\longmapsto F\otimes _{p_X^{-1}\sho_{S}} p_X^{-1}(\sho_{S}/\mathfrak{m}_{s_o}).
\end{align*}
This functor will be useful for getting properties of $\rD^\rb(p_X^{-1}\sho_S)$ from well-known properties of $\rD^\rb(\C_X)$.

\begin{proposition}\label{C:CC}
Let $F$ and $F'$ belong to $\rD^\rb(p_X^{-1}\sho_{S})$. Then, for each $s_o\in S$ there is a well-defined natural morphism
\[
Li^*_{s_o}(\Rhom_{p^{-1}(\sho_{S})}(F, F')) \to \Rhom_{\C_{X}}(Li^*_{s_o}(F), Li^*_{s_o}(F'))
\]
which is an isomorphism in $\rD^\rb(\C_X)$.
\end{proposition}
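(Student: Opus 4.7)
The plan is to produce the morphism from two standard adjunctions and to verify that it is an isomorphism via a local Koszul computation. Setting $A := p_X^{-1}\sho_S$ and $B := p_X^{-1}(\sho_S/\mathfrak{m}_{s_o})$ (so that $Li^*_{s_o}(G) = G \otimes^L_A B$, viewed after identification as a complex on $X \times \{s_o\} \cong X$), I would construct the morphism as the composition of the canonical evaluation map
\[
\Rhom_A(F, F') \otimes^L_A B \longrightarrow \Rhom_A\bigl(F,\, F' \otimes^L_A B\bigr) = \Rhom_A(F, Li^*_{s_o} F')
\]
with the restriction-of-scalars isomorphism $\Rhom_A(F, G) \simeq \Rhom_B(F \otimes^L_A B, G)$, which holds whenever $G$ carries a $B$-module structure. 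Both maps are defined at the level of complexes using flat, resp.\ injective, resolutions and pass to the derived category.

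To check that the composite is an isomorphism, the problem is local on $S$, so I would choose holomorphic coordinates $(s_1,\dots,s_d)$ centred at $s_o$; the sequence $(s_1,\dots,s_d)$ is regular in $\sho_{S,s_o}$ and its Koszul complex is a bounded resolution of $\sho_S/\mathfrak{m}_{s_o}$ by finite-rank free $\sho_S$-modules. Pulling back by the exact functor $p_X^{-1}$, one obtains a bounded flat resolution $K^\sbullet \to B$ by finite-rank free $A$-modules, computing both
\[
Li^*_{s_o}(G) = G \otimes_A K^\sbullet \quad\text{and}\quad Li^*_{s_o}\Rhom_A(F, F') = \Rhom_A(F, F') \otimes_A K^\sbullet.
\]
Since each $K^i$ is free of finite rank over $A$, one has $\Rhom_A(F, F' \otimes_A K^i) = \Rhom_A(F, F') \otimes_A K^i$ (the additive functor $\Rhom_A(F,-)$ commutes with finite direct sums), so the evaluation map is an isomorphism term-by-term and therefore on total complexes. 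The change-of-rings isomorphism $\Rhom_A(F, Li^*_{s_o} F') \simeq \Rhom_B(Li^*_{s_o} F, Li^*_{s_o} F')$ then follows from the standard derived adjunction, using that $A$ and $B$ are Noetherian \cite[Prop.\,A.14]{Kashiwara03}.

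The main obstacle is really bookkeeping: the target of $i^*_{s_o}$ is $\text{Mod}(\C_X)$, so one must carefully pass between complexes on $X \times S$ supported on $X \times \{s_o\}$ and their restrictions to $X$. This is handled by the identification $B \simeq i_{s_o,*}\C_X$ (with $i_{s_o}:X\hookrightarrow X\times S$ the closed inclusion) and the full faithfulness of $i_{s_o,*}$, which together reduce $\Rhom_B$ on $B$-modules supported on $X \times \{s_o\}$ to $\Rhom_{\C_X}$ on $X$, yielding the statement.
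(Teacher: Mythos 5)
Your argument is correct and follows essentially the same route as the paper's, which simply cites formula (A.10) of Kashiwara's appendix (the evaluation/adjunction morphism you construct) and the remark following it (which, just as you do, uses coherence to produce a bounded finite-rank free resolution of the second factor to check the isomorphism). You have merely made explicit, via the Koszul resolution of $\sho_S/\mathfrak{m}_{s_o}$, what the paper defers to loc.\ cit.
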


\begin{proof}
Let us fix $s_o\in S$. The existence of the morphism follows from \cite [(A.10)]{Kashiwara03}. Moreover, since $p^{-1}_X\sho_S$ is a coherent ring as remarked above and $p_X^{-1}(\sho_{S}/\mathfrak{m}_{s_o})$ is $p^{-1}_X\sho_S$-coherent, we can apply the argument given after (A.10) in loc.\,cit.\ to show that it is an isomorphism.
\end{proof}

\begin{proposition}\label{P:91}
Let $F$ and $F'$ belong to $\rD^\rb(p^{-1}_X\sho_S)$ and let $\phi: F\to F'$ be a morphism. Assume the following conditions:
\begin{enumerate}
\item
for all $j\in\Z$ and $(x,s)\in X\times S$, $\shh^j(F)_{(x,s)}$ and $\shh^j(F')_{(x,s)}$ are of finite type over $\sho_{S,s}$,

\item
for all $s_o\in S$, the natural morphism
\[
Li^*_{s_o}(\phi): Li^*_{s_o}(F)\to Li^*_{s_o}(F')
\]
is an isomorphism in $\rD^\rb(\C_X)$.
\end{enumerate}

Then $\phi$ is an isomorphism.
\end{proposition}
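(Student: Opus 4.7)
The plan is to reduce the statement to a vanishing result via the mapping cone. Setting $C:=\operatorname{cone}(\phi)$ in $\rD^\rb(p^{-1}_X\sho_S)$, the morphism $\phi$ is an isomorphism if and only if $C\simeq 0$, so everything amounts to proving $C\simeq 0$. My first step would be to verify that $C$ inherits condition~(1) and that $Li^*_{s_o}(C)\simeq 0$ for every $s_o\in S$. The distinguished triangle $F\xrightarrow{\phi}F'\to C\xrightarrow{+1}$ provides a long exact cohomology sequence in which each $\shh^j(C)$ is an extension of a cokernel by a kernel of maps between the $\shh^\bullet(F)$ and $\shh^\bullet(F')$; since $\sho_{S,s}$ is Noetherian (as recalled before Proposition~\ref{C:CC}) and finite generation is stable under kernels, cokernels and extensions over a Noetherian ring, condition~(1) propagates to the stalks of the $\shh^j(C)$. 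Applying $Li^*_{s_o}$ to the same triangle and invoking condition~(2) immediately gives $Li^*_{s_o}(C)\simeq 0$ in $\rD^\rb(\C_X)$ for every $s_o\in S$.

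Next I would argue by contradiction. Assuming $C\not\simeq 0$, let $k$ be the largest integer with $\shh^k(C)\neq 0$. The key technical point is the identification at the top cohomological degree
\[
\shh^k\bigl(Li^*_{s_o}(C)\bigr)\;\simeq\;i^*_{s_o}\bigl(\shh^k(C)\bigr),
\]
which I would obtain by representing $C$ by a complex concentrated in degrees $\leq k$, choosing a flat $p_X^{-1}\sho_S$-resolution in the same range of degrees, and noting that the top cohomology of such a resolution is simply the cokernel of the topmost differential, a cokernel that commutes with the ordinary tensor product $\otimes_{\sho_{S,s_o}}\C$ by right exactness. Combined with $Li^*_{s_o}(C)\simeq 0$, this forces $i^*_{s_o}(\shh^k(C))=0$ for every $s_o\in S$, which stalkwise amounts to $\shh^k(C)_{(x,s_o)}=\mathfrak{m}_{s_o}\cdot\shh^k(C)_{(x,s_o)}$.

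The argument then closes with Nakayama's lemma: each stalk $\shh^k(C)_{(x,s_o)}$ is a finitely generated module over the Noetherian local ring $\sho_{S,s_o}$ equal to its own maximal-ideal-multiple, and therefore vanishes. This forces $\shh^k(C)=0$, contradicting the choice of~$k$, so $C\simeq 0$ and $\phi$ is an isomorphism. I expect the main obstacle to be the right-exactness-type identification at the top degree, because $Li^*_{s_o}$ is only defined via flat resolutions and one must truncate the resolution carefully; once it is granted, the rest is the formal triangle-plus-Nakayama package that hypothesis~(1) was tailored to support.
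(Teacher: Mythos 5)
Your proof is correct, and it takes a cleaner route than the paper's. Both arguments reduce immediately to showing that the cone $C$ vanishes and both close with Nakayama's lemma applied to the stalks $\shh^k(C)_{(x,s_o)}$, which hypothesis~(1) makes finitely generated over $\sho_{S,s_o}$. Where you diverge is in how you extract $\shh^k(C)\otimes\sho_S/\mathfrak{m}_{s_o}=0$ from the vanishing of $Li^*_{s_o}(C)$: you use the general fact that the top cohomology of $Li^*_{s_o}$ of a bounded-above complex is the ordinary $i^*_{s_o}$ of the top cohomology (right-exactness of $\otimes$ read off from a flat resolution in degrees $\leq k$), whereas the paper works in local coordinates on $S$ and does an induction on $\dim S$, resolving $\sho_S/\mathfrak{m}_{s_o}$ by the Koszul complex one variable at a time so that for $\dim S=1$ the condition $Li^*_{s_o}F=0$ literally says $(s-s_o)$ acts invertibly on each $\shh^j(F)$. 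Your approach avoids the choice of coordinates and the induction entirely; the paper's approach is slightly more hands-on and also makes explicit the role of the Koszul resolution, which the authors use elsewhere. Both are complete; the only point you should be careful to spell out in a final write-up is the existence of a bounded-above flat resolution of $C$ over the sheaf of rings $p_X^{-1}\sho_S$ (routine, but it is the technical input behind the top-degree identification).
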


\begin{proof}
It is enough to prove that the mapping cone of $\phi$ is quasi-isomorphic to $0$. So we are led to proving that for $F\in \rD^\rb(p^{-1}\sho_S)$, if $\shh^j(F)_{(x,s)}$ are of finite type over $\sho_{S,s}$ for all $(x,s)\in X\times\nobreak S$, and $Li^*_{s_o}(F)$ is quasi-isomorphic to $0$ for each $s_o\in S$, then $F$ is quasi-isomorphic to $0$.

We may assume that $S$ is an open subset of $\C^n$ with coordinates $s^1,\dots,s^n$ and we will argue by induction on $n$. Assume $n=1$. For such an $F$, for each $s_o\in S$ and any $j\in\Z$ the morphism $(s^1-s^1_o):\shh^j(F)\to \shh^j(F)$ is an isomorphism, hence $\shh^j(F)/(s^1-s^1_o)\shh^j(F)=0$ and by Nakayama's Lemma, for any $x\in X$, $\shh^j(F)_{(x,s^1_o)}=0$ and the result follows. For $n\geq2$, the cone~$F'$ of the morphism $(s^n-s^n_o):F\to F$ also satisfies $Li^*_{s'_o}F'=0$ for any $s'_o=(s^1_o,\dots,s^{n-1}_o)$, hence is zero by induction, so we can argue as in the case $n=1$.
\end{proof}

\subsection{$S$-locally constant sheaves}
We say that a sheaf $F$ of $\C$-vector spaces (resp.~$p_X^{-1}\sho_S$-modules) on $X\times S$ is \emph{$S$-locally constant} if, for each point $(x,s)\in X\times S$, there exists a neighbourhood $U=V_x\times T_s$ of $(x,s)$ and a sheaf $G^{(x,s)}$ of $\C$-vector spaces (resp.~$\sho_S$-modules) on $T_s$, such that $F_{|U}\simeq p_U^{-1}G^{(x,s)}$. The category of $S$-locally constant sheaves is an abelian full subcategory of that of sheaves of $\C_{X\times S}$-vector spaces (resp.~$p^{-1}\sho_S$-modules), which is stable by extensions in the respective categories, by $\shh\!om$ and tensor products. Moreover, if $\pi:Y\times\nobreak X\times\nobreak S\to Y\times S$ is the projection, with $X$ contractible, then, if $F'$ is $S$-locally constant on $Y\times X\times S$,
\begin{itemize}
\item
$\pi_*F'$ is $S$-locally constant on $Y\times S$,
\item
$R^k\pi_*F'=0$ if $k>0$,
\item
$F'\simeq\pi^{-1}\pi_*F'$.
\end{itemize}
Applying this to $Y=\{\textup{pt}\}$, we find that, if $F$ is $S$-locally constant, then for each $x\in X$ there exists a connected neighbourhood $V_x$ of $x$ and a $\C_S$-module (resp.~$\sho_S$-module) $G^{(x)}$ such that $F=p_{V_x}^{-1}G^{(x)}$, and one has $G^{(x)}=p_{V_x,*}F_{|V_x\times S}=F_{|\{x\}\times S}$. We shall also denote by $\rD^\rb_\lc(p_X^{-1}\C_{S})$ (resp.~$\rD^\rb_\lc(p_X^{-1}\sho_{S})$) the bounded triangulated category whose objects are the complexes having $S$-locally constant cohomology sheaves. Similarly, for such a complex $F$ we have $F_{|V_x\times S}\simeq p_{V_x}^{-1}Rp_{V_x,*}F_{|V_x\times S}\simeq p_{V_x}^{-1}F_{|\{x\}\times S}$.

We conclude from the previous remarks, by using the natural forgetful functor $\rD^\rb(p_X^{-1}\sho_{S})\to \rD^\rb(\C_{X\times S})$:

\begin{lemma}\label{L:0}\mbox{}
\begin{enumerate}
\item\label{L:0a}
An object $F$ of $\rD^\rb(p_X^{-1}\sho_{S})$ belongs to $\rD^\rb_\lc(p_X^{-1}\sho_{S})$ if and only if, when regarded as an object of $\rD^\rb(\C_{X\times S})$, it belongs to $\rD^\rb_\lc(p_X^{-1}\C_{S})$.
\item\label{L:0b}
For any object $F$ of $\rD^\rb_\lc(p_X^{-1}\sho_{S})$ and for any $s_o\in S$, $Li^*_{s_o}F$ belongs to $\rD^\rb_\lc(\C_X)$.
\end{enumerate}
\end{lemma}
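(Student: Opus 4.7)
My plan is to reduce both assertions to the explicit local description of $S$-locally constant complexes established at the end of the preceding subsection: on a small contractible neighbourhood $V_x$ of a point $x\in X$, any complex $F\in \rD^\rb_\lc$ satisfies $F_{|V_x\times S}\simeq p_{V_x}^{-1}F_{|\{x\}\times S}$, and this isomorphism is realised by the adjunction morphism $p^{-1}Rp_*\to\id$, which is $p^{-1}\sho_S$-linear by construction.

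For \ref{L:0a}, the ``only if'' direction is tautological: a local trivialisation $F_{|U}\simeq p_U^{-1}G^{(x,s)}$ of $p^{-1}\sho_S$-modules is automatically one of $\C$-sheaves after applying the forgetful functor. For the converse, I would argue cohomology sheaf by cohomology sheaf. If each $\shh^j(F)$ is $S$-locally constant as a $\C$-sheaf, then on a small connected neighbourhood $V_x$ of $x$ the adjunction morphism $p_{V_x}^{-1}p_{V_x,*}\shh^j(F)_{|V_x\times S}\to \shh^j(F)_{|V_x\times S}$ is an isomorphism by the bullet points recalled above. Since both source and target carry natural $p^{-1}\sho_S$\nobreakdash-structures and the map is $p^{-1}\sho_S$-linear, this exhibits $\shh^j(F)$ as $S$-locally constant in the $p^{-1}\sho_S$-sense.

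For \ref{L:0b}, I would combine the local identification $F_{|V_x\times S}\simeq p_{V_x}^{-1}F_{|\{x\}\times S}$ in $\rD^\rb(p^{-1}\sho_S)$ with the natural commutation $Li^*_{s_o}\circ p_{V_x}^{-1}\simeq p_{V_x}^{-1}\circ Li^*_{s_o}$, obtained by pulling back an $\sho_S$-flat resolution of $\sho_S/\mathfrak{m}_{s_o}$ (for instance the Koszul resolution on local coordinates at $s_o$). This yields $Li^*_{s_o}F_{|V_x\times S}\simeq p_{V_x}^{-1}(Li^*_{s_o}F_{|\{x\}\times S})$, whose cohomology sheaves are constant on $V_x$ because they are the $p^{-1}$ of mere $\C$-vector spaces.

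The main point requiring care is the commutation used in \ref{L:0b}: one must verify that $p_{V_x}^{-1}$ of an $\sho_S$-flat resolution is again flat over $p_{V_x}^{-1}\sho_S$, which follows stalkwise since the stalk of $p_{V_x}^{-1}\sho_S$ at $(x,s)$ is $\sho_{S,s}$ and $p_{V_x}^{-1}$ is exact. Beyond this, the argument is essentially formal bookkeeping of $p^{-1}\sho_S$-structures and I do not foresee a serious obstacle.
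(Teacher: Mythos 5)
Your proposal is correct and follows essentially the same route the paper intends: it reduces to the explicit local trivialisation $F_{|V_x\times S}\simeq p_{V_x}^{-1}F_{|\{x\}\times S}$ given in the remarks preceding the lemma, observes that the trivialising map is the adjunction unit and hence $p^{-1}\sho_S$-linear (giving \ref{L:0a}), and then commutes $Li^*_{s_o}$ past $p_{V_x}^{-1}$ via the exactness and stalkwise flatness-preservation of inverse image (giving \ref{L:0b}). The only addition is that you supply the details the paper leaves implicit under ``we conclude from the previous remarks''; no gap.
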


\subsection{$S$-weakly $\R$-constructible sheaves}
As long as the manifold $X$ is fixed, we shall write $p$ instead of $p_X$.

\begin{definition}\label{D:3}
Let $F\in \rD^\rb(\C_{X\times S})$ (resp.~$F\in \rD^\rb(p^{-1}\sho_{S})$). We shall say that $F$ is $S$-weakly $\R$-constructible if there exists a subanalytic $\mu$\nobreakdash-strati\-fi\-cation $(X_\alpha)$ of $X$ (see \cite[Def.~8.3.19]{K-S90}) such that, for all $j\in \Z$, $\shh^j(F)|_{X_\alpha\times S}$ is $S$-locally constant.
\end{definition}

This condition is independent of the choice of the $\mu$-stratification and characterizes a full triangulated subcategory $\rD^\rb_\wrc(p^{-1}\C_S)$ (resp.\ $\rD^\rb_\wrc(p^{-1}\sho_{S})$) of $\rD^\rb(\C_{X\times S})$ (resp.~$\rD^\rb(p^{-1}\sho_{S})$). Due to Lemma~\ref{L:0}, an object~$F$ of $\rD^\rb(p^{-1}\sho_{S})$ is in $\rD^\rb_\wrc(p^{-1}\sho_{S})$ if and only if it belongs to $\rD^\rb_\wrc(p^{-1}\C_S)$ when considered as an object of $\rD^\rb(\C_{X\times S})$. By mimicking for $\rD^\rb_\wrc(p^{-1}\C_S)$ the proof of \cite[Prop.\,8.4.1]{K-S90} and according to the previous remark for $\rD^\rb_\wrc(p^{-1}\sho_{S})$, we obtain:

\begin{proposition}\label{P:6}
Let $F$ be $S$-weakly $\R$-constructible on $X$ and let $X=\bigsqcup_\alpha X_\alpha$ be a $\mu$-stratification of $X$ adapted to $F$. Then the following conditions are equivalent:
\begin{enumerate}
\item\label{P:6-1}
for all $j\in\Z$ and for all $\alpha$, $\shh^j(F)|_{X_\alpha\times S}$ is $S$-locally constant.
\item\label{P:6-2}
$SS(F)\subset (\bigsqcup_\alpha T^*_{X_\alpha} X)\times T^* S$.
\item\label{P:6-3}
There exists a closed conic subanalytic Lagrangian subset $\Lambda$ of $T^*X$ such that $SS(F)\subset \Lambda\times T^*S$.
\end{enumerate}
\end{proposition}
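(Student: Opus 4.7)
The plan is to follow the proof of \cite[Prop.\,8.4.1]{K-S90}, adapted to $X\times S$: the cotangent bundle splits as $T^*(X\times S)=T^*X\times T^*S$, and each of the three conditions constrains only the $T^*X$-projection of $SS(F)$. Thanks to the remark preceding the proposition, the question for $\rD^\rb(p^{-1}\sho_S)$ reduces to the analogous one for $\rD^\rb(\C_{X\times S})$ via the forgetful functor, both the micro-support and the $S$-local constancy of cohomology being tested on the underlying $\C$-sheaf (Lemma~\ref{L:0}\eqref{L:0a}); we therefore work throughout with $F\in\rD^\rb(\C_{X\times S})$.

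For \eqref{P:6-1}$\Rightarrow$\eqref{P:6-2}, $S$-local constancy of $\shh^j(F)|_{X_\alpha\times S}$ provides, around each point of $X_\alpha\times S$, an isomorphism with the pullback of a sheaf on $S$ by the second projection. Such a pullback has micro-support in $0_{T^*X_\alpha}\times T^*S$, and the $\mu$-stratification calculation of \cite[Prop.\,8.4.1]{K-S90}, applied with an inert $S$-parameter, then yields $SS(F)\subset\bigl(\bigsqcup_\alpha T^*_{X_\alpha}X\bigr)\times T^*S$. The implication \eqref{P:6-2}$\Rightarrow$\eqref{P:6-3} is immediate on setting $\Lambda=\bigsqcup_\alpha T^*_{X_\alpha}X$, which is closed, $\R^+$-conic, subanalytic and Lagrangian in $T^*X$ by the defining property of a $\mu$-stratification.

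The essential step is \eqref{P:6-3}$\Rightarrow$\eqref{P:6-1}. Possibly refining $(X_\alpha)$, one may assume $\Lambda\subset\bigsqcup_\alpha T^*_{X_\alpha}X$. At a point $(x_0,s_0)\in X_\alpha\times S$, choose adapted local coordinates so that a neighbourhood of $x_0$ in $X$ identifies with a product $U\times U'$ of convex opens, with $U\subset\R^k$ and $X_\alpha$ corresponding to $U\times\{0\}$, and let $V$ be a small convex open around $s_0$ in $S$. In these coordinates the hypothesis reads $SS(F)\subset\Lambda\times T^*V$, and the microlocal/Morse-theoretic propagation argument of \cite[Prop.\,8.4.1]{K-S90} that establishes local constancy of $F|_{X_\alpha}$ goes through verbatim, the $V$-direction and the unconstrained $T^*V$-component being preserved throughout. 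One obtains the required local isomorphism $F|_{U\times\{0\}\times V}\simeq q^{-1}\bigl(F|_{\{x_0\}\times V}\bigr)$, where $q\colon U\times\{0\}\times V\to V$ is the second projection.

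The main obstacle is this last step: one must check that each microlocal construction of \cite{K-S90} used in the proof of Prop.\,8.4.1 remains valid in the presence of a free $T^*S$-parameter. The cornerstone is the parametrized analogue of \cite[Cor.\,5.4.19]{K-S90}: if $F'\in\rD^\rb(\C_{U\times V})$ satisfies $SS(F')\subset 0_U\times T^*V$ with $U$ a convex open in $\R^k$, then the canonical morphism $q^{-1}Rq_*F'\to F'$ is an isomorphism. Its proof is identical to the non-parametrized case, the propagation being carried out in the $U$-direction while the $V$-parameter remains untouched; this gives the $S$-local constancy along $X_\alpha$ and the passage to $\rD^\rb(p^{-1}\sho_S)$ then follows from Lemma~\ref{L:0}\eqref{L:0a}.
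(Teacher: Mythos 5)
Your proposal is correct and matches the paper's approach exactly: the paper also reduces to $\rD^\rb(\C_{X\times S})$ via Lemma~\ref{L:0}\eqref{L:0a} and then ``mimics'' the proof of \cite[Prop.\,8.4.1]{K-S90} with the $T^*S$-direction carried along as an inert parameter. You have usefully identified the parametrized analogue of \cite[Cor.\,5.4.19]{K-S90} as the key propagation ingredient making the mimicry go through, which the paper leaves implicit.
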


\begin{proposition}\label{C:CCC}
Let $F\in \rD^\rb_\wrc(p_{X}^{-1}\sho_{S})$ and let $s_o\in S$. Then $Li_{s_o}^*(F)\in \rD^\rb_\wrc(\C_X)$.
\end{proposition}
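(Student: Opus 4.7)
The plan is to show that any $\mu$-stratification $(X_\alpha)$ of $X$ adapted to $F$ in the sense of Definition \ref{D:3} remains adapted to $Li^*_{s_o}(F)$ when the latter is viewed as an object of $\rD^\rb(\C_X)$. Concretely, I need to check that for each stratum $X_\alpha$ and every $j\in\Z$ the sheaf $\shh^j(Li^*_{s_o}F)|_{X_\alpha}$ is locally constant on $X_\alpha$, and that $Li^*_{s_o}F$ has bounded cohomology.

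The key step is a base-change compatibility: for the locally closed inclusion $j_\alpha\colon X_\alpha\hookrightarrow X$ (denoting also $j_\alpha\times\id_S$ in the relative setting), there is a canonical isomorphism
\[
j_\alpha^{-1}\bigl(Li^*_{s_o}F\bigr)\simeq Li^*_{s_o}\bigl(j_\alpha^{-1}F\bigr)
\]
in $\rD^\rb(\C_{X_\alpha})$. This is immediate from the definition $Li^*_{s_o}(-)=(-)\otimes^L_{p_X^{-1}\sho_S}p_X^{-1}(\sho_S/\mathfrak{m}_{s_o})$: the functor $j_\alpha^{-1}$ is exact, commutes with derived tensor products, and satisfies $j_\alpha^{-1}p_X^{-1}=p_{X_\alpha}^{-1}$. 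By hypothesis $j_\alpha^{-1}F$ lies in $\rD^\rb_\lc(p_{X_\alpha}^{-1}\sho_S)$; invoking Lemma \ref{L:0}\eqref{L:0b} on the manifold $X_\alpha$ then gives $Li^*_{s_o}(j_\alpha^{-1}F)\in \rD^\rb_\lc(\C_{X_\alpha})$, so $(Li^*_{s_o}F)|_{X_\alpha}$ has locally constant cohomology on each stratum, as required.

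Boundedness of $Li^*_{s_o}F$ follows from regularity of the local ring $\sho_{S,s_o}$, whose global dimension equals $\dim_\C S<\infty$; hence $Li^*_{s_o}$ has bounded amplitude and sends $\rD^\rb(p_X^{-1}\sho_S)$ to $\rD^\rb(\C_X)$. Combined with the stratum-wise local constancy above, this gives $Li^*_{s_o}F\in\rD^\rb_\wrc(\C_X)$, witnessed by the same $\mu$-stratification. The only mildly delicate point is to justify the interchange $j_\alpha^{-1}\circ Li^*_{s_o}\simeq Li^*_{s_o}\circ j_\alpha^{-1}$ rigorously; one does this by computing $Li^*_{s_o}F$ using a $p_X^{-1}\sho_S$-flat resolution of $F$, noting that the restriction of such a resolution to $X_\alpha\times S$ remains $p_{X_\alpha}^{-1}\sho_S$-flat since flatness is a stalk-wise property preserved by $j_\alpha^{-1}$.
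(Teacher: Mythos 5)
Your proposal is correct and follows essentially the same route as the paper: both rest on the commutation $i_\alpha^{-1}\circ Li^*_{s_o}\simeq Li^*_{s_o}\circ i_\alpha^{-1}$ for the stratum inclusions of an adapted $\mu$-stratification, followed by an application of Lemma~\ref{L:0}\eqref{L:0b} on each stratum. The paper states this more tersely, while you supply the (correct) extra justifications for boundedness via the finite global dimension of $\sho_{S,s_o}$ and for the interchange via a flat resolution.
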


\begin{proof}
Let $i_\alpha:X_\alpha\hookrightarrow X$ denote the locally closed inclusion of a stratum of an adapted stratification $(X_\alpha)$. It is enough to observe that, for each $\alpha$, we have $i^{-1}_\alpha Li^*_{s_o}(F)\simeq Li^*_{s_o}(i_\alpha^{-1}F)$, and to apply Lemma \ref{L:0}\eqref{L:0b}.
\end{proof}

Let now $Y$ be another real analytic manifold and consider a real analytic map $f: Y\to X$. The following statements for objects of $\rD^\rb_\wrc(p^{-1}\C_S)$ are easily deduced from Proposition \ref{P:6} similarly to the absolute case treated in~\cite{K-S90}, as consequences of Theorem $8.3.17$, Proposition $8.3.11$, Corollary $6.4.4$ and Proposition $5.4.4$ of loc.\,cit.\ In order to get the same statements for objects of $\rD^\rb_\wrc(p^{-1}\sho_S)$, one uses Lemma \ref{L:0}\eqref{L:0a} together with \S\ref{S:0}. We will not distinguish between $f$ and $f_S$.

\begin{proposition}\label{P:01}\mbox{}
\begin{enumerate}
\item
If $F$ is $S$-weakly $\R$-constructible on $X$, then so are $f^{-1}(F)$ and $f^!(F)$.

\item
Assume that $F'$ is $S$-weakly $\R$-constructible on $Y$ and that $f$ is proper on $\supp(F')$. Then $Rf_*(F')$ is $S$-weakly $\R$-constructible on $X$.
\end{enumerate}
\end{proposition}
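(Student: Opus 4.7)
The plan is to reduce both parts to the absolute case already handled in \cite{K-S90}, exploiting the special structure $f_S=f\times\id_S$. First, by Lemma~\ref{L:0}\eqref{L:0a} and the compatibility of the forgetful functor $\rD^\rb(p^{-1}\sho_S)\to\rD^\rb(\C_{X\times S})$ with the functors $f^{-1},f^!,Rf_*$ (recalled in~\S\ref{S:0}), it suffices to prove the statements for objects of $\rD^\rb_\wrc(p^{-1}\C_S)$.

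Next, I would invoke the microlocal characterization of Proposition~\ref{P:6}\eqref{P:6-3}: an object is $S$-weakly $\R$-constructible precisely when its microsupport is contained in $\Lambda\times T^*S$ for some closed conic subanalytic Lagrangian $\Lambda\subset T^*X$ (resp.\ $T^*Y$). For~(1), I would apply the classical microsupport estimates for inverse image \cite[Prop.~5.4.4 and~5.4.13]{K-S90} to $f_S$. Since $f_S=f\times\id_S$, the correspondence $T^*Y\xleftarrow{f_d} Y\times_X T^*X\xrightarrow{f_\pi} T^*X$ associated to~$f$ becomes, for $f_S$, simply its product with the identity correspondence on~$T^*S$. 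Hence, starting from $SS(F)\subset\Lambda\times T^*S$, both $SS(f^{-1}F)$ and $SS(f^!F)$ end up inside $\Lambda'\times T^*S$ with $\Lambda'=f_\pi f_d^{-1}(\Lambda)$; that $\Lambda'$ is a closed conic subanalytic Lagrangian in~$T^*Y$ is the content of \cite[Thm.~8.3.17]{K-S90} combined with Proposition~8.3.11 of loc.\,cit.

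For~(2) the procedure is parallel: since $f$ is proper on $\supp(F')$, so is $f_S$, and the proper direct image microsupport estimate \cite[Prop.~5.4.4]{K-S90}, together with Corollary~6.4.4 of loc.\,cit., shows that $SS(Rf_*F')\subset f_\pi f_d^{-1}(\Lambda')\times T^*S$ with $f_\pi f_d^{-1}(\Lambda')$ again a closed conic subanalytic Lagrangian in~$T^*X$. In both parts one concludes by reapplying Proposition~\ref{P:6}\eqref{P:6-3}.

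I do not expect a substantial obstacle; the only point requiring genuine verification is that, thanks to the product structure of $f_S$, the factor $T^*S$ in the microsupport is transported identically by the inverse-image and proper-direct-image correspondences, so that the constraint of lying inside $\Lambda\times T^*S$ (resp.\ $\Lambda'\times T^*S$) is preserved. Once this is made explicit, the proof is a straight transcription of the absolute statements of \cite{K-S90}, as the paper hints.
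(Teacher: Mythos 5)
Your proposal follows essentially the same route as the paper: reduce to $\rD^\rb_\wrc(p^{-1}\C_S)$ via Lemma~\ref{L:0}\eqref{L:0a} and the compatibility of the forgetful functor with the six operations (recalled in \S\ref{S:0}), then apply the microlocal characterization of Proposition~\ref{P:6}\eqref{P:6-3} together with the standard estimates of \cite{K-S90}, noting that the product structure $f_S=f\times\id_S$ transports the $T^*S$ factor identically. The paper's proof is precisely this, citing Theorem~8.3.17, Proposition~8.3.11, Corollary~6.4.4 and Proposition~5.4.4 of loc.\,cit., so your reconstruction is on target.

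One small slip to fix in part~(1): for inverse images the microlocal correspondence goes from $T^*X$ to $T^*Y$, so the Lagrangian containing $SS(f^{-1}F)$ and $SS(f^!F)$ should be $\Lambda'={}^tf'\bigl(f_\pi^{-1}(\Lambda)\bigr)$ (what you denote $f_d f_\pi^{-1}(\Lambda)$), not $f_\pi f_d^{-1}(\Lambda)$; you do have the direction right in part~(2), where $Rf_*$ pushes from $T^*Y$ to $T^*X$ and the relevant set is $f_\pi\bigl({}^tf'^{-1}(\Lambda')\bigr)$. Once this is corrected, the argument is exactly the one the paper intends.
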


Given a closed subanalytic subset $Y\subset X$, we will denote by $i:Y\times S\hookrightarrow X\times S$ the closed inclusion and by $j$ the complementary open inclusion.

\begin{corollary}\label{P:01b}
Assume that $F^*$ is $S$-weakly $\R$-constructible on $X\setminus Y$. Then the objects $Rj_!F^*$ and $Rj_*F^*$ are also $S$-weakly $\R$-constructible on $X$.
\end{corollary}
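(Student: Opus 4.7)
My plan is to reduce to the $\C$-coefficient case via the forgetful functor and then to handle $Rj_!F^*$ and $Rj_*F^*$ separately: the first by a direct stratification argument, and the second by feeding the already-produced $Rj_!F^*$ into the standard open--closed distinguished triangle, so that Proposition~\ref{P:01} can be applied.

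First I would invoke Lemma~\ref{L:0}\eqref{L:0a} to reduce both assertions to the case of $\rD^\rb_\wrc(p^{-1}\C_S)$. For $Rj_!F^*$, which equals $j_!F^*$ because $j$ is an open immersion, I would take a subanalytic $\mu$-stratification of $X\setminus Y$ adapted to $F^*$ and, using that $Y\subset X$ is closed subanalytic, refine it to a subanalytic $\mu$-stratification $(X_\alpha)$ of $X$ in which $Y$ is a union of strata. On strata contained in $X\setminus Y$ the restriction of $j_!F^*$ coincides with that of $F^*$, hence has $S$-locally constant cohomology, while it is zero on strata contained in $Y$. By Definition~\ref{D:3} this proves $Rj_!F^*\in\rD^\rb_\wrc(p^{-1}\C_S)$.

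For $Rj_*F^*$ I would set $G:=Rj_!F^*$ and apply the open--closed distinguished triangle
\[
i_*i^!G\longrightarrow G\longrightarrow Rj_*j^{-1}G\xrightarrow{\;+1\;}.
\]
Since $j^{-1}Rj_!F^*=F^*$, the third term is exactly $Rj_*F^*$. The first part of Proposition~\ref{P:01} gives $i^!G\in\rD^\rb_\wrc(p^{-1}\C_S)$, and because the closed inclusion $i$ is proper on any support, the second part of the same proposition yields $i_*i^!G=Ri_*i^!G\in\rD^\rb_\wrc(p^{-1}\C_S)$ as well. The triangle, combined with stability of $\rD^\rb_\wrc(p^{-1}\C_S)$ under cones, then delivers $Rj_*F^*\in\rD^\rb_\wrc(p^{-1}\C_S)$. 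The only nontrivial ingredient is the initial refinement of the subanalytic $\mu$-stratification so that it contains the closed subanalytic set $Y$ as a union of strata; this is a standard fact about subanalytic $\mu$-stratifications and I do not foresee it to be a real obstacle.
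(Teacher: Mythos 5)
Your proposal is correct and follows essentially the same route as the paper: the paper also declares the $Rj_!F^*$ case immediate, applies Proposition~\ref{P:01} to obtain $S$-weak $\R$-constructibility of $i^!Rj_!F^*$ (and hence of $Ri_*i^!Rj_!F^*$), and concludes for $Rj_*F^*$ via the same open--closed distinguished triangle $Ri_*i^!Rj_!F^*\to Rj_!F^*\to Rj_*F^*\xrightarrow{+1}$. You merely spell out the stratification refinement behind the ``obvious'' step and make the passage to $\C$-coefficients via Lemma~\ref{L:0}\eqref{L:0a} explicit, which the paper leaves implicit.
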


\begin{proof}
The statement for $Rj_!F^*$ is obvious. Then Proposition \ref{P:01} implies that $i^!Rj_!F^*$ is $S$-weakly $\R$-constructible. Conclude by using the distinguished triangle
\[
Ri_*i^!Rj_!F^*\to Rj_!F^*\to Rj_*F^*\xrightarrow{~+1}
\]
and the $S$-weak $\R$-constructibility of the first two terms.
\end{proof}

\begin{proposition}\label{C:2.6}
An object $F\in \rD^\rb(\C_{X\times S})$ (resp.~$F\in \rD^\rb(p^{-1}(\sho_S))$) is $S$-weakly $\R$-constructible with respect to a $\mu$-stratification $(X_\alpha)$ if and only if, for each $\alpha$, $i_\alpha^!F$ has $S$-locally constant cohomology on $X_\alpha$.
\end{proposition}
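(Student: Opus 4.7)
My plan is to follow the proof of \cite[Prop.~8.4.1]{K-S90} verbatim, with the microsupport condition relative to $T^*S$ replacing the absolute one, and to treat the $\C_{X\times S}$-linear and $p^{-1}\sho_S$-linear cases uniformly via Lemma~\ref{L:0}\eqref{L:0a}. I freely invoke Proposition~\ref{P:6} to rephrase ``$F$ is $S$-weakly $\R$-constructible with respect to $(X_\alpha)$'' as the microsupport condition $SS(F)\subset\Lambda\times T^*S$ with $\Lambda=\bigsqcup_\beta T^*_{X_\beta}X$.

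For the direct implication, fix a stratum $X_\alpha$ and the closed inclusion $i_\alpha:X_\alpha\hookrightarrow X$. The standard microsupport estimate for a $!$-pullback (see \cite[Prop.~5.4.4 and Cor.~6.4.4]{K-S90}), combined with the Whitney-type axiom built into the notion of a $\mu$-stratification --- namely that $(T^*_{X_\beta}X)|_{X_\alpha}\subset T^*_{X_\alpha}X$ for every $\beta$ with $X_\alpha\subset\overline{X_\beta}$ --- shows that $SS(i_\alpha^!F)\subset T^*_{X_\alpha}X_\alpha\times T^*S$. The $T^*S$-factor plays a passive role since the maps at issue are of the form $(i_\alpha)_S$. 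Applying Proposition~\ref{P:6} on $X_\alpha$ with its trivial stratification then yields that $i_\alpha^!F$ is $S$-locally constant.

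For the converse, I argue by induction on the number of strata. Choose $X_1$ of minimal dimension, hence closed in $X$, and write $i:X_1\hookrightarrow X$ and $j:U\mathrel{:=}X\setminus X_1\hookrightarrow X$. Base change along $j$ gives $(i_\alpha|_U)^!(F|_U)\simeq i_\alpha^!F$ for $\alpha\ne1$, so the hypothesis is preserved under restriction to $U$; by induction $F|_U$ is $S$-weakly $\R$-constructible on $U$ relative to $(X_\alpha)_{\alpha\ne1}$. Corollary~\ref{P:01b} then makes $Rj_*(F|_U)$ $S$-weakly $\R$-constructible on $X$, while Proposition~\ref{P:01}(2) does the same for $i_*i^!F$, the hypothesis being that $i^!F=i_1^!F$ is $S$-locally constant. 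The distinguished triangle
\[
i_*i^!F\to F\to Rj_*j^{-1}F\xrightarrow{~+1}
\]
then exhibits $F$ as an extension of two $S$-weakly $\R$-constructible complexes, hence $S$-weakly $\R$-constructible on $X$ with respect to $(X_\alpha)$.

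The crux is the direct implication, whose content is the geometric fact that the $\mu$-stratification axiom forces the image of $SS(F)$ in $T^*X_\alpha$ under the natural projection $(T^*X)|_{X_\alpha}\to T^*X_\alpha$ to reduce to the zero section. Once this containment of conormals is in hand (it is contained in the discussion of $\mu$-stratifications in \cite[\S 8.3]{K-S90}), the rest is formal and extends to the relative setting without further obstacle, since the $T^*S$-factor never interacts with the micro-local estimates in the $X$-direction.
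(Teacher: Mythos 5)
Your proof is correct and follows essentially the same route as the paper: the direct implication relies on the microsupport estimate of \cite[Cor.~6.4.4]{K-S90} together with Proposition~\ref{P:6}, and the converse combines restriction, Corollary~\ref{P:01b}, and a distinguished triangle with the fact that $\rD^\rb_\wrc$ is triangulated. The only (minor) divergence is the inductive setup in the converse: the paper inducts on the codimension $k$, using the triangle $i_k^!F\to i_k^{-1}F\to i_k^{-1}Rj_{k,*}j_k^{-1}F$ on $X_k$ to deduce that $i_k^{-1}F$ is locally constant, whereas you remove one closed stratum of minimal dimension and apply $i_*i^!F\to F\to Rj_*j^{-1}F$ on all of $X$; both inductions are valid and essentially equivalent. (One notational slip: you wrote $(T^*_{X_\beta}X)|_{X_\alpha}\subset T^*_{X_\alpha}X$, which is vacuous since $X_\beta\cap X_\alpha=\emptyset$; the $\mu$-condition concerns the closure $\overline{T^*_{X_\beta}X}$.)
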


\begin{proof}
Assume that $F$ is $S$-weakly $\R$-constructible with respect to a $\mu$\nobreakdash-strati\-fi\-cation $(X_\alpha)$ of $X$. Then $i_\alpha^!F$ has $S$-locally constant cohomology on $X_\alpha$. Indeed the estimation of the micro-support of \cite[Cor.\,6.4.4(ii)]{K-S90} implies that $SS(i_\alpha^!F)$ (like $SS(i_\alpha^*F)$) is contained in $T^*_{X_\alpha}{X_\alpha}\times T^*S$, so $i_\alpha^!F$ has locally constant cohomology on $X_\alpha$ for each $\alpha$, according to Proposition \ref{P:6}.

Conversely, if $i_\alpha^!F$ is locally constant for each $\alpha$, then $F$ is $S$-weakly $\R$-constructible. Indeed, we argue by induction and we denote by $X_k$ the union of strata of codimension $\leq k$ in $X$. Assume we have proved that $F_{|X_{k-1}\times S}$ is $S$-weakly $\R$-constructible with respect to the stratification $(X_\alpha)$ with $\codim X_\alpha\leq k-1$. We denote by $j_k:X_{k-1}\hookrightarrow X_k$ the open inclusion and by $i_k$ the complementary closed inclusion. According to Corollary \ref{P:01b}, $Rj_{k,*}j_k^{-1}F$ is $S$-weakly $\R$-constructible with respect to $(X_\alpha)_{|X_k}$. Now, by using the exact triangle $i_k^!F\to i_k^{-1}F\to i_k^{-1}Rj_{k,*}j_k^{-1}F\xrightarrow{+1}$, we conclude that $i_k^{-1}F$ is locally constant, hence $F_{|X_k\times S}$ is $S$-weakly $\R$-constructible.
\end{proof}

\begin{corollary}\label{C:2.6.1}
Let $F,F'\in \rD^\rb_\wrc(p_X^{-1}\sho_S)$. Then $\Rhom_{p_X^{-1}\sho_S}(F,F')$ also belongs to $\rD^\rb_\wrc(p_X^{-1}\sho_S)$.
\end{corollary}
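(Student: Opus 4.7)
The plan is to verify the criterion of Proposition~\ref{C:2.6}: to choose a $\mu$-stratification $(X_\alpha)$ of $X$ such that $i_\alpha^!\Rhom_{p_X^{-1}\sho_S}(F,F')$ has $S$-locally constant cohomology on each $X_\alpha\times S$. Refining a pair of $\mu$-stratifications adapted respectively to $F$ and $F'$, one finds a common $\mu$-stratification $(X_\alpha)$ adapted to both; let $i_\alpha:X_\alpha\times S\hookrightarrow X\times S$ denote the associated locally closed inclusions.

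The standard adjunction for locally closed inclusions, valid on $\rD^\rb(p^{-1}\sho_S)$ by the considerations of Section~\ref{S:0}, yields
\[
i_\alpha^!\Rhom_{p_X^{-1}\sho_S}(F,F')\simeq \Rhom_{p_{X_\alpha}^{-1}\sho_S}(i_\alpha^{-1}F,\,i_\alpha^!F').
\]
By definition of an adapted stratification, $i_\alpha^{-1}F$ has $S$-locally constant cohomology on $X_\alpha\times S$, and by Proposition~\ref{C:2.6} applied to $F'$, so does $i_\alpha^!F'$. The corollary therefore reduces to the following local claim: the $\Rhom_{p^{-1}\sho_S}$ of two objects of $\rD^\rb_\lc(p^{-1}\sho_S)$ on $X_\alpha$ again belongs to $\rD^\rb_\lc(p^{-1}\sho_S)$.

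This local claim may be checked on products $V\times T$ with $V\subset X_\alpha$ contractible and $T\subset S$ small enough that the two objects are of the form $p_V^{-1}H$ and $p_V^{-1}H'$ for some $H,H'\in\rD^\rb(\sho_T)$. The heart of the argument, and what I expect to be the main obstacle, is the identification
\[
\Rhom_{p_V^{-1}\sho_T}(p_V^{-1}H,\,p_V^{-1}H')\simeq p_V^{-1}\Rhom_{\sho_T}(H,H'),
\]
which exhibits the left-hand side as $S$-locally constant. I would construct the natural morphism from right to left, then verify that it is a quasi-isomorphism by computing global sections on the basis of opens $V'\times T'$ with $V'\subset V$ contractible: the adjunction $(p_{V'}^{-1},Rp_{V',*})$ together with $Rp_{V',*}p_{V'}^{-1}\simeq\id$ on $\rD^\rb(\sho_{T'})$ (which holds since $V'$ is contractible) identifies the global sections on both sides with $R\Hom_{\sho_{T'}}(H|_{T'},H'|_{T'})$.
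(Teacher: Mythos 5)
Your proof is correct and follows essentially the same route as the paper: reduce via Proposition~\ref{C:2.6} to showing that each $i_\alpha^!\Rhom_{p_X^{-1}\sho_S}(F,F')$ has $S$-locally constant cohomology, apply the adjunction isomorphism to rewrite this as $\Rhom_{p_{X_\alpha}^{-1}\sho_S}(i_\alpha^{-1}F,i_\alpha^!F')$, and then identify the $\Rhom$ of two $S$-locally constant objects as $S$-locally constant. The only difference is in how that last identification is carried out: you construct $p_V^{-1}\Rhom_{\sho_T}(H,H')\to\Rhom_{p_V^{-1}\sho_T}(p_V^{-1}H,p_V^{-1}H')$ and check it directly on sections over a basis using the adjunction $(p_{V'}^{-1},Rp_{V',*})$ and $Rp_{V',*}p_{V'}^{-1}\simeq\id$ for contractible $V'$, whereas the paper writes $p_\alpha^{-1}G'_\alpha=p_\alpha^!G'_\alpha[-\dim_\R X_\alpha]$ and invokes the standard isomorphism $\Rhom(p^{-1}G,p^!G')\simeq p^!\Rhom(G,G')$ (the analogue of \cite[Prop.~3.1.11]{K-S90} available by \S\ref{S:0}); the two arguments are interchangeable.
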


\begin{proof}
In view of Proposition \ref{C:2.6}, it is sufficient to prove that for each $\alpha$, $i_\alpha^!\Rhom_{p_X^{-1}\sho_S}(F,F')$ belongs to $\rD^\rb_\lc(p_X^{-1}\sho_S)$. We have:
\[
i_\alpha^!\Rhom_{p^{-1}\sho_S}(F,F')\simeq
\Rhom_{p_\alpha^{-1}\sho_S}(i^{-1}_\alpha F,i_\alpha^!F').
\]
Since both $i^{-1}_\alpha F$ and $i_\alpha^!F'$ belong to $\rD^\rb_\lc(p_X^{-1}\sho_S)$, according to Proposition \ref{C:2.6}, we have locally on $X_\alpha$ isomorphisms $i^{-1}_\alpha F=p_\alpha^{-1}G_\alpha$ and $i_\alpha^!F'=p_\alpha^{-1}G'_\alpha=p_\alpha^!G'_\alpha[-\dim_\R X_\alpha]$ for some $\sho_S$-modules $G_\alpha$ and $G'_\alpha$. Then
\begin{align*}
\Rhom_{p_\alpha^{-1}\sho_S}(i^{-1}_\alpha F,i_\alpha^!F')&=\Rhom_{p_\alpha^{-1}\sho_S}(p^{-1}_\alpha G_\alpha,p_\alpha^!G'_\alpha[-\dim_\R X_\alpha])\\
&\simeq p_\alpha^!\Rhom_{\sho_S}(G_\alpha,G'_\alpha)[-\dim_\R X_\alpha]\\
&=p_\alpha^{-1}\Rhom_{\sho_S}(G_\alpha,G'_\alpha).\qedhere
\end{align*}
\end{proof}

The following lemma will be useful in the next section. Assume that $X=Y\times Z$ and that the $\mu$-stratification $(X_\alpha)$ of $X$ takes the form $X_\alpha=Y\times Z_\alpha$, where $(Z_\alpha)$ is a $\mu$-stratification of $Z$. We denote by $q:X\to Y$ the projection. Let $z_o\in Z$, let $U\ni z_o$ be a coordinate neighbourhood of $z_o$ in~$Z$ and, for each $\epsilon>0$ small enough, let $B_\epsilon\subset U$ be the open ball of radius~$\epsilon$ centered at $z_o$ and let $\ov B_\epsilon$ be the closed ball and $S_\epsilon$ its boundary. For the sake of simplicity, we denote by $q_\epsilon,q_{\ov\epsilon},q_{\partial\epsilon}$ the corresponding projections.

We set $Z^*=Z\setminus\{z_o\}$ and $X^*=Y\times Z^*$. We denote by $i:Y\times\{z_o\}\hookrightarrow Y\times Z$ and by $j:Y\times Z^*\hookrightarrow Y\times Z$ the complementary closed and open inclusions.

\begin{lemma}\label{L:2.7}
Let $F^*\in \rD^\rb_\wrc(p_{X^*}^{-1}\C_S)$ (resp.~$F^*\in \rD^\rb_\wrc(p_{X^*}^{-1}\sho_{S})$) be adapted to the previous stratification. Then there exists $\epsilon_o>0$ such that, for each $\epsilon\in(0,\epsilon_o)$, the natural morphisms
\[
Rq_{\partial\epsilon,*}F^*_{|Y\times S_\epsilon\times S}\longleftarrow Rq_{\ov\epsilon,*}Rj_*F^*\longrightarrow Rq_{\epsilon,*}Rj_*F^*\longrightarrow i^{-1}Rj_*F^*
\]
are isomorphisms.
\end{lemma}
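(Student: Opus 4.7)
The plan is to translate the hypothesis on $F^*$ into a non-characteristic condition for the radial distance function to $z_o$ via Proposition~\ref{P:6}, and then to invoke the Kashiwara--Schapira non-characteristic deformation lemma in order to conclude that the four relevant stalks at a point of $Y\times S$ coincide for every $\epsilon\in(0,\epsilon_o)$. By Lemma~\ref{L:0}\eqref{L:0a}, it suffices to treat the case $F^*\in \rD^\rb_\wrc(p^{-1}_{X^*}\C_S)$.

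First I would choose $\epsilon_o>0$ so that $\ov B_{\epsilon_o}\subset U$ and, for every $\epsilon\in(0,\epsilon_o)$, the sphere $S_\epsilon$ meets every stratum $Z_\alpha\cap U$ other than $\{z_o\}$ transversally; the set of $\epsilon$ for which this fails is a subanalytic subset of $\R_{>0}$ and, by the curve selection lemma, cannot accumulate at $0$. Setting $\rho(y,z,s)=|z-z_o|^2$, so that $d\rho_{(y,z,s)}=(0,d_z\rho,0)$, Proposition~\ref{P:6} together with the hypothesis that $F^*$ is adapted to the product stratification gives
\[
SS(F^*)\subset \bigsqcup_\alpha (T^*_Y Y\times T^*_{Z_\alpha}Z^*)\times T^*S,
\]
and the transversality forces $(0,d_z\rho,0)\notin SS(F^*)$ whenever $0<|z-z_o|<\epsilon_o$. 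Thus $F^*$ is non-characteristic along every level hypersurface $\{\rho=t\}$ with $0<t<\epsilon_o^2$.

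Next, fixing $(y_o,s_o)\in Y\times S$, I would use proper base change applied to the proper maps $q_{\ov\epsilon,S}$ and $q_{\partial\epsilon,S}$, together with the formula $R\Gamma(K,G|_K)\simeq\varinjlim_{W\supset K} R\Gamma(W,G)$ for a closed set $K$ (\cite[Prop.~2.6.9]{K-S90}), to express each of the four stalks at $(y_o,s_o)$ as a filtered colimit of sections of $F^*$ over open sets of the form $V_{y_o}\times(B_{\epsilon'}\setminus\{z_o\})\times T_{s_o}$ or $V_{y_o}\times(B_{\epsilon+\delta}\setminus\ov B_{\epsilon-\delta})\times T_{s_o}$. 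The non-characteristic deformation lemma (\cite[Prop.~2.7.2 \& 5.2.1]{K-S90}), fed with the micro-support condition obtained in the previous step, then yields that for $0<\epsilon_1<\epsilon_2<\epsilon_o$ and sufficiently small $V_{y_o},T_{s_o}$, the restriction morphisms
\[
R\Gamma\bigl(V_{y_o}\times(B_{\epsilon_2}\setminus\{z_o\})\times T_{s_o},F^*\bigr)\longrightarrow R\Gamma\bigl(V_{y_o}\times(B_{\epsilon_1}\setminus\{z_o\})\times T_{s_o},F^*\bigr)
\]
as well as the restriction from such a punctured ball to a thin spherical shell $V_{y_o}\times(B_{\epsilon+\delta}\setminus\ov B_{\epsilon-\delta})\times T_{s_o}$ are isomorphisms. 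Recognising the three natural maps in the statement as precisely these restriction maps on stalks finishes the argument.

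The step that will require the most care is the bookkeeping needed to identify the three direct-image stalks and $(i^{-1}Rj_*F^*)_{(y_o,s_o)}$ with colimits of the same sections of $F^*$, and to match the natural morphisms of the statement with the corresponding restriction maps; the subanalytic choice of $\epsilon_o$ and the translation of transversality into a condition on $SS(F^*)$ are comparatively routine applications of Proposition~\ref{P:6} and standard subanalytic geometry.
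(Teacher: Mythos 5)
Your proposal is correct and follows essentially the same route as the paper: both rest on the observation that adaptedness to the product stratification places $SS(F^*)$ away from the radial conormal direction, and both then invoke a microlocal Morse-type argument along $|z-z_o|$ to compare the four objects. The paper works directly with $q$-soft representatives of the direct-image complexes and cites \cite[Prop.~8.3.12 \& 5.4.17]{K-S90}, while you check stalks via proper base change and the non-characteristic deformation lemma, but this is a presentational difference rather than a different method.
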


\begin{proof}
We note that, according to Corollary \ref{P:01b}, $F:=Rj_*F^*$ is $S$\nobreakdash-weakly $\R$-constructible, and is adapted to the stratification $(Y\times Z_\alpha)$. On the other hand, according to \S\ref{S:0}, it is enough to consider the case where $F^*$ is an object of $\rD^\rb_\wrc(p_{X^*}^{-1}\C_S)$.

Let us start with the right morphisms. We can argue with any object $F\in \rD^\rb_\wrc(p_X^{-1}\C_S)$, not necessarily of the form $Rj_*F^*$. Recall that we have an adjunction morphism $q_{\epsilon}^{-1}Rq_{\epsilon,*}\to\id$ and thus $i^{-1}q_{\epsilon}^{-1}Rq_{\epsilon,*}\to i^{-1}$. Since $q_{\epsilon}\circ i=\id_{Y\times S}$, we get the second right morphism. The first one is the restriction morphism.

According to \cite[Prop.\,8.3.12 and 5.4.17]{K-S90}, there exists $\epsilon_o>0$ such that, for $\epsilon'<\epsilon$ in $(0,\epsilon_o)$, the restriction morphisms $Rq_{\ov\epsilon,*}F\to Rq_{\epsilon,*}F\to Rq_{\ov\epsilon',*}F\to Rq_{\epsilon',*}F$ are isomorphisms. In particular, the first right morphism is an isomorphism.

Let us take a $q$-soft representative of $F$, that we still denote by $F$. The inductive system $q_{\epsilon,*}F$ ($\epsilon\to0$) has limit $i^{-1}F$ and all morphisms of this system are quasi-isomorphisms. Hence the second right morphism is a quasi-isomorphism.

\begin{remark}\label{R:2.8}
A similar argument gives an isomorphism $i^!F\xrightarrow{\sim}Rq_{\epsilon,!}F$, by using \cite[Prop.\,5.4.17(c)]{K-S90}.
\end{remark}

For the left morphism, we take a $q$-soft representative of $F^*$ that we still denote by $F^*$. For $\epsilon_-<\epsilon<\epsilon_+<\epsilon_o$, we denote by $B_{\epsilon_-,\epsilon_+}$ the open set $B_{\epsilon_+}\setminus\ov B_{\epsilon_-}$ and by $q_{\epsilon_-,\epsilon_+}$ the corresponding projection. We have $q_{\partial\epsilon,*}F^*=\varinjlim_{|\epsilon_+-\epsilon_-|\to0}q_{\epsilon_-,\epsilon_+,*}F^*$. On the other hand, the morphisms of this inductive system are all quasi-isomorphisms, according to \cite[Prop.\,5.4.17]{K-S90}. Fixing $\epsilon'\in(\epsilon,\epsilon_o)$ we find a quasi-isomorphism $q_{\epsilon',*}F^*\to q_{\partial\epsilon,*}F^*$. On the other hand, from the first part we have $q_{\epsilon',*}F^*\xrightarrow{\sim}q_{\ov\epsilon,*}F^*$, hence the result.
\end{proof}

\subsection{$S$-coherent local systems and $S$-$\R$-constructible sheaves}

\begin{notation}\label{D:09}
We shall denote by $\rD^\rb_{\lccoh}(p_X^{-1}\sho_S)$ the full triangulated subcategory of $\rD^\rb_\lc(p_X^{-1}\sho_S)$ whose objects satisfy, locally on $X$, $F\simeq p_X^{-1}G$ with $G\in \rD^\rb_\coh(\sho_S))$. Equivalently, for each $x\in X$, $F_{|\{x\}\times S}\in \rD^\rb_\coh(\sho_S)$ (see~the remarks before Lemma \ref{L:0}).
\end{notation}

\begin{definition}\label{D:31}
Given $F\in \rD^\rb_\wrc(p_X^{-1}\sho_S)$, we say that $F$ is $\R$\nobreakdash-construc\-ti\-ble if, for some $\mu$-stratification of $X$, $X=\bigsqcup_\alpha X_\alpha$, for all $j\in \Z$, $\shh^j(F)|_{X_\alpha\times S}\in \rD^\rb_\lccoh(p_{X_\alpha}^{-1}\sho_S)$. This condition characterizes a full triangulated subcategory of $\rD^\rb_\wrc(p_X^{-1}\sho_{S})$ which we denote by $\rD^\rb_\rc(p_X^{-1}\sho_{S})$.
\end{definition}

Similarly to Proposition \ref{C:CCC} we have:

\begin{proposition}\label{C:CCCC}
Let $F\in \rD^\rb_\rc(p_{X}^{-1}\sho_{S})$ and let $s_o\in S$. Then $Li_{s_o}^*(F)\in \rD^\rb_\rc(\C_X)$.
\end{proposition}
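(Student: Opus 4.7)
My plan is to follow the scheme of the preceding Proposition~\ref{C:CCC} while upgrading its locally constant conclusion to an $\R$-constructible one. I fix a $\mu$-stratification $(X_\alpha)$ of $X$ adapted to $F$ in the sense of Definition~\ref{D:31}. The commutation $i_\alpha^{-1}Li^*_{s_o}F\simeq Li^*_{s_o}(i_\alpha^{-1}F)$ used in the proof of Proposition~\ref{C:CCC}, together with that proposition itself, already shows that the cohomology sheaves of $Li^*_{s_o}F$ are locally constant on each $X_\alpha$. The remaining task is to prove that their stalks are finite-dimensional.

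For this, the key step is to promote the restriction $i_\alpha^{-1}F$ from having each cohomology sheaf separately in $\rD^\rb_\lccoh$ (the hypothesis of Definition~\ref{D:31}) to being itself an object of $\rD^\rb_\lccoh(p_\alpha^{-1}\sho_S)$. By hypothesis, for each $x\in X_\alpha$ and each $j\in\Z$ one has a local identification $\shh^j(i_\alpha^{-1}F)|_{V_x\times S}\simeq p^{-1}G_j$ with $G_j$ coherent on $S$. In particular $i_\alpha^{-1}F\in \rD^\rb_\lc(p_\alpha^{-1}\sho_S)$ by Lemma~\ref{L:0}\eqref{L:0a}, so the structural discussion preceding that lemma yields a local isomorphism $(i_\alpha^{-1}F)|_{V_x\times S}\simeq p_{V_x}^{-1}\bigl((i_\alpha^{-1}F)|_{\{x\}\times S}\bigr)$. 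Restricting each $\shh^j$ to the slice $\{x\}\times S$ recovers the coherent models $G_j$, so the slice $(i_\alpha^{-1}F)|_{\{x\}\times S}$ lies in $\rD^\rb_\coh(\sho_S)$. The slice-wise characterization in Notation~\ref{D:09} then places $i_\alpha^{-1}F$ in $\rD^\rb_\lccoh(p_\alpha^{-1}\sho_S)$, so locally $i_\alpha^{-1}F\simeq p_\alpha^{-1}G$ for some $G\in \rD^\rb_\coh(\sho_S)$.

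Applying $Li^*_{s_o}$ to such a local model identifies $Li^*_{s_o}(p_\alpha^{-1}G)$ with the constant complex on $X_\alpha$ of value $Li^*_{s_o}G=G_{s_o}\otimes^L_{\sho_{S,s_o}}\C$. Since $\sho_{S,s_o}$ is a regular local ring of finite Krull dimension, every coherent $\sho_{S,s_o}$-module admits a finite free resolution, hence $Li^*_{s_o}G$ is a bounded complex of finite-dimensional $\C$-vector spaces. Combined with the $S$-weak $\R$-constructibility furnished by Proposition~\ref{C:CCC}, this yields $Li^*_{s_o}F\in \rD^\rb_\rc(\C_X)$ as desired.

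The main obstacle is the upgrade step of the second paragraph: passing from a termwise statement about cohomology sheaves (Definition~\ref{D:31}) to the complex-level statement $i_\alpha^{-1}F\in \rD^\rb_\lccoh(p_\alpha^{-1}\sho_S)$ which is what allows the clean computation of $Li^*_{s_o}$ on a local model. The bridge is precisely the slice-wise characterization of $\rD^\rb_\lccoh$ recorded in Notation~\ref{D:09}, together with the $S$-locally constant structural description given before Lemma~\ref{L:0}.
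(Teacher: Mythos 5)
Your proof is correct and follows essentially the same route the paper intends: the paper states this result with the single remark ``Similarly to Proposition~\ref{C:CCC}'', and your argument is exactly the stratum-by-stratum reduction plus the observation that a coherent slice at $s_o$ becomes a bounded complex of finite-dimensional vector spaces under $Li^*_{s_o}$, using the slice characterization of Notation~\ref{D:09}. One small slip: the fact that $i_\alpha^{-1}F$ has all cohomology sheaves $S$-locally constant already places it in $\rD^\rb_\lc(p_\alpha^{-1}\sho_S)$ by the definition of that category (given just before Lemma~\ref{L:0}), not by Lemma~\ref{L:0}\eqref{L:0a}, which concerns the comparison between the $\sho_S$- and $\C_S$-linear settings.
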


\begin{remark}\label{R:0}
An object of $\rD^\rb_\wrc(p_X^{-1}\sho_S)$ is in $\rD^\rb_\rc(p_X^{-1}\sho_S)$ if and only if, for any $x\in X$, $F_{|\{x\}\times S}$ belongs to $\rD^\rb_\coh(\sho_S)$.
\end{remark}

A straightforward adaptation of \cite[Prop.\,8.4.8]{K-S90} gives:

\begin{proposition}\label{P:07}
Let $f: Y\to X$ be a a morphism of manifolds and let $F\in \rD^\rb_\rc(p_Y^{-1}\sho_{S})$. Assume that $f_S$ is proper on $\supp(F)$. Then
\[
Rf_{S,*}F\in \rD^\rb_\rc(p_X^{-1}\sho_{S}).
\]
\end{proposition}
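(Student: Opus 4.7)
The plan is to verify the two conditions that characterize $\rD^\rb_\rc(p_X^{-1}\sho_S)$: first, $S$-weak $\R$-constructibility; second, coherence of the fibers over $S$, via Remark~\ref{R:0}. The first part is free: since $\rD^\rb_\rc \subset \rD^\rb_\wrc$ and $f_S$ is proper on $\supp(F)$, Proposition~\ref{P:01}(2) immediately gives that $Rf_{S,*}F \in \rD^\rb_\wrc(p_X^{-1}\sho_S)$. So the real content is to show that for each $x \in X$, the restriction $(Rf_{S,*}F)|_{\{x\}\times S}$ lies in $\rD^\rb_\coh(\sho_S)$.

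Fix $x\in X$. By proper base change applied to the cartesian square with horizontal maps $f^{-1}(x)\times S \hookrightarrow Y\times S$ and $\{x\}\times S \hookrightarrow X\times S$, one has
\[
(Rf_{S,*}F)|_{\{x\}\times S} \simeq R\pi_{*}\bigl(F|_{f^{-1}(x)\times S}\bigr),
\]
where $\pi\colon f^{-1}(x)\times S \to S$ is the projection. Since $f_S$ is proper on $\supp(F)$, the set $Z := f^{-1}(x)\cap \supp(F)$ is a compact subanalytic subset of $Y$, and we are reduced to showing that $R\pi_{Z,*}(F|_{Z\times S})$ is $\sho_S$-coherent.

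Next, pick a $\mu$-stratification $(Z_\alpha)$ of $Z$ adapted to $F|_{Z\times S}$; by compactness it is finite. I argue by induction on the number of strata using the recollement triangles. Choose an open stratum $Z_0$ with closed complement $Z'$, denoting by $j\colon Z_0 \hookrightarrow Z$ and $i\colon Z' \hookrightarrow Z$ the inclusions. Applying $R\pi_*$ to the distinguished triangle $j_!j^{-1}F\to F|_{Z\times S} \to i_*i^{-1}F \xrightarrow{+1}$, the term $R\pi_*(i_*i^{-1}F)$ is coherent by the induction hypothesis applied to $Z'$ (fewer strata), so it remains to handle the single-stratum case, namely to show that $R\pi_{0,!}(F|_{Z_0\times S})$ is $\sho_S$-coherent, where $\pi_0$ is the projection from $Z_0\times S$.

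For the single-stratum step, $F|_{Z_0 \times S}$ is $S$-locally constant of $\sho_S$-coherent stalk type. Choose a finite good covering $(U_\beta)$ of $Z_0$ by subanalytic open subsets on each of which $F|_{U_\beta\times S}\simeq p_{U_\beta}^{-1}H_\beta$ with $H_\beta\in \rD^\rb_\coh(\sho_S)$; this is possible because $\ov{Z_0}$ is compact and $Z_0$ is semianalytic. By a \v{C}ech / Mayer--Vietoris spectral sequence, the computation of $R\Gamma_c(Z_0, F|_{Z_0\times S})$ reduces to terms of the form $R\Gamma_c(W, p_W^{-1} H)\simeq R\Gamma_c(W, \C_W)\otimes^L_\C H$ with $W$ a finite intersection of $U_\beta$'s and $H\in \rD^\rb_\coh(\sho_S)$. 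Since $W$ is a relatively compact semianalytic set, $R\Gamma_c(W,\C_W)$ is a bounded complex of finite-dimensional $\C$-vector spaces, and therefore the tensor product lies in $\rD^\rb_\coh(\sho_S)$. Stability of coherence under the finitely many extensions provided by the spectral sequence and the recollement triangles concludes the induction. The main obstacle is the careful handling of this single-stratum case, in particular ensuring that one has a genuinely finite presentation over $\sho_S$ after pushing forward; this is why compactness of $Z$ (from the properness assumption) and finiteness of the adapted $\mu$-stratification are both essential.
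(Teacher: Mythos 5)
Your argument is a correct and reasonably complete adaptation of \cite[Prop.\,8.4.8]{K-S90}, which is exactly what the paper invokes (it offers no further detail, calling it a ``straightforward adaptation''): reduce via proper base change to the coherence of $R\pi_{Z,*}(F|_{Z\times S})$ over $S$ for $Z=f^{-1}(x)\cap\supp F$ compact subanalytic, then induct over a finite adapted stratification. The one step that deserves more justification than ``$\ov{Z_0}$ is compact and $Z_0$ is semianalytic'' is the existence of a \emph{finite} subanalytic trivializing open cover of the noncompact stratum $Z_0$ --- trivializing neighborhoods in $Z_0$ shrink as one approaches $\ov{Z_0}\setminus Z_0$, so compactness of $\ov{Z_0}$ alone does not give it; one should invoke a finite subanalytic triangulation of $Z$ compatible with $(Z_\alpha)$ and take open stars in a barycentric subdivision, or, more in the spirit of the K-S finiteness theorems, replace the \v{C}ech cover by the skeletal filtration of $Z_0$ by open simplices $\sigma$, on each of which $R\pi_{\sigma,!}(p_\sigma^{-1}H)\simeq H[-\dim\sigma]$ is manifestly coherent.
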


We can also characterize $\rD^\rb_\rc(p_X^{-1}\sho_S)$ as in Corollary \ref{C:2.6}.

\begin{corollary}\label{C:2.12}
An object $F\in \rD^\rb(p_X^{-1}\sho_S)$ is in $\rD^\rb_\rc(p_X^{-1}\sho_S)$ if and only if, for some subanalytic Whitney stratification $(X_\alpha)$ of $X$, the complexes $i_\alpha^!F$ belong to $\rD^\rb_{\lccoh}(p_\alpha^{-1}\sho_S)$.
\end{corollary}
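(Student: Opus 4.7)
\emph{The plan} is to mimic the proof of Proposition~\ref{C:2.6}, upgrading the $S$-local constancy conditions there into their $\lccoh$ counterparts. Since any subanalytic Whitney stratification is in particular a $\mu$-stratification, Proposition~\ref{C:2.6} already handles the underlying $S$-weak $\R$-constructibility, so in both directions the only new content is the coherence over $\sho_S$ of the stalks of the restrictions of $F$ and of $i_\alpha^!F$ along $\{x\}\times S$.

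For the direction $\Rightarrow$, I would assume $F\in\rD^\rb_\rc(p_X^{-1}\sho_S)$, fix a stratum $X_\alpha$ and a point $x\in X_\alpha$, and choose a local product chart $X=X_\alpha\times Z$ with $x=(x_0,z_o)$ in which the $\mu$-stratification takes the product form required by Lemma~\ref{L:2.7}. Remark~\ref{R:2.8} then supplies a local isomorphism $i_\alpha^!F\simeq Rq_{\epsilon,!}F$ for $\epsilon$ small enough. Choosing a slightly larger ball $B_{\epsilon'}\supset\ov B_\epsilon$ and writing $Rq_{\epsilon,!}F=Rq_{\epsilon',*}(\tilde\jmath_!F)$ with $\tilde\jmath:X_\alpha\times B_\epsilon\hookrightarrow X_\alpha\times B_{\epsilon'}$, the functor $\tilde\jmath_!$ trivially preserves $\rD^\rb_\rc$, while $q_{\epsilon'}$ is proper on $\supp(\tilde\jmath_!F)\subset X_\alpha\times\ov B_\epsilon$, so Proposition~\ref{P:07} gives $Rq_{\epsilon,!}F\in\rD^\rb_\rc(p_{X_\alpha}^{-1}\sho_S)$. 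Combined with the $S$-local constancy furnished by Proposition~\ref{C:2.6}, this places $i_\alpha^!F$ in $\rD^\rb_\lccoh(p_\alpha^{-1}\sho_S)$.

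For $\Leftarrow$, I would run exactly the same induction on the codimension $k$ of strata as in the converse part of Proposition~\ref{C:2.6}. The base case $k=0$ is immediate, since $i_\alpha^!=i_\alpha^{-1}$ on open strata and the hypothesis then reads $F_{|X_\alpha\times S}\in\rD^\rb_\lccoh$. For the inductive step, with $j_k:X_{k-1}\hookrightarrow X_k$ and $i_k$ its complementary closed inclusion, I use the distinguished triangle $i_k^!F\to i_k^{-1}F\to i_k^{-1}Rj_{k,*}j_k^{-1}F\xrightarrow{+1}$. Its first vertex lies in $\rD^\rb_\lccoh$ by hypothesis, so it is enough to show the third does. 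Locally around $y\in X_k\setminus X_{k-1}$, a product chart together with Lemma~\ref{L:2.7} identifies the link term with $Rq_{\partial\epsilon,*}(j_k^{-1}F_{|V_y\times S_\epsilon\times S})$; the inner restriction sits in $\rD^\rb_\rc$ by the induction hypothesis and Proposition~\ref{P:01}(1), and $q_{\partial\epsilon}$ is proper since $S_\epsilon$ is a compact sphere, so Proposition~\ref{P:07} returns an object of $\rD^\rb_\rc$. Combined once more with the $S$-local constancy from Proposition~\ref{C:2.6}, this gives an object of $\rD^\rb_\lccoh$, completing the induction.

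\emph{The main obstacle} is to match the local product structures of Lemma~\ref{L:2.7} and Remark~\ref{R:2.8} with the proper direct image preservation of Proposition~\ref{P:07}, so that $\R$-constructibility genuinely propagates both through $i_\alpha^!$ (used for $\Rightarrow$) and through the link functor $Rq_{\partial\epsilon,*}$ (used for $\Leftarrow$). Once both $\rD^\rb_\rc$ and $\rD^\rb_\lc$ information is available, the conjunction lands in $\rD^\rb_\lccoh$ essentially by definition.
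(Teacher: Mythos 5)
Your argument is correct, and the $\Leftarrow$ direction is essentially the paper's: codimension induction, the attachment triangle $i_k^!F\to i_k^{-1}F\to i_k^{-1}Rj_{k,*}j_k^{-1}F$, and Lemma~\ref{L:2.7} together with Proposition~\ref{P:07} to establish coherence of the link term, with Proposition~\ref{C:2.6} supplying the underlying weak $\R$-constructibility in advance.

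For the $\Rightarrow$ direction you take a genuinely different, slightly shorter route. The paper reuses the same attachment triangle: since $i_k^{-1}F$ is visibly in $\rD^\rb_\lccoh$ (it is a restriction of $F\in\rD^\rb_\rc$), coherence of $i_k^!F$ reduces through the triangle to coherence of $i_k^{-1}Rj_{k,*}j_k^{-1}F\simeq Rq_{\partial\epsilon,*}j_k^{-1}F$ (Lemma~\ref{L:2.7}), handled by Proposition~\ref{P:07} applied to the proper map $q_{\partial\epsilon}$. You instead apply Remark~\ref{R:2.8} directly, giving $i_\alpha^!F\simeq Rq_{\epsilon,!}F$, and then factor $Rq_{\epsilon,!}=Rq_{\epsilon',*}\circ\tilde\jmath_!$ to place yourself in the situation of Proposition~\ref{P:07}. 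Both arguments exploit properness on a compact fibre (sphere $S_\epsilon$ vs.\ closed ball $\ov B_\epsilon$); yours bypasses the triangle and $Rj_{k,*}$ in this direction, at the small cost of the intermediate claim that $\tilde\jmath_!$ preserves $\rD^\rb_\rc$ -- which you rightly call trivial, and which does indeed follow from Corollary~\ref{P:01b} together with Remark~\ref{R:0}, since the stalks of $\tilde\jmath_!F$ along $\{x\}\times S$ are either those of $F$ or zero. Both variants are sound; the paper's has the minor stylistic advantage of being symmetric between the two implications, while yours makes the Verdier-dual nature of the computation ($i_\alpha^!$ vs.\ $Rq_{\epsilon,!}$) more transparent.
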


\begin{proof}
Assume $F$ is in $\rD^\rb_\rc(p_X^{-1}\sho_S)$. We need to prove the coherence of $i_\alpha^!F$. We argue by induction as in Corollary \ref{C:2.6}, with the same notation. Since the question is local on $X_k$, by the Whitney property of the stratification $(X_\alpha)$ we can assume that $X_{k-1}=Z\times Y_k$ and there exists a Whitney stratification $(Z_\alpha)$ of $Z$ such that $X_\alpha=Z_\alpha\times Y_k$ for each $\alpha$ such that $X_\alpha\subset X_{k-1}$ (see e.g. \cite[\S1.4]{G-M88}). Proving that $i_k^!F$ is $p^{-1}\sho_S$-coherent is equivalent to proving that $i_k^{-1}Rj_{k,*}j_k^{-1}F$ is so, since we already know that $i_k^{-1}F$ is so. According to Lemma \ref{L:2.7}, $i_k^{-1}Rj_{k,*}j_k^{-1}F$ is computed as $Rq_{\partial\epsilon,*}j_k^{-1}F$, and since $q_{\partial\epsilon}$ is proper, we can apply Proposition \ref{P:07} to get the coherence.

Conversely, Corollary \ref{C:2.6} already implies that $F$ is an object of $\rD^\rb_\wrc(p_X^{-1}\sho_S)$. We argue then as above: since we know by assumption that $i_k^!F$ is coherent, it suffices to prove that $i_k^{-1}Rj_{k,*}j_k^{-1}F$ is so, and the previous argument applies.
\end{proof}

\subsection{$S$-weakly $\C$-constructible sheaves and $S$-$\C$-constructible sheaves}
Let now assume that $X$ is a complex analytic manifold.

\begin{definition}\label{D:001}\mbox{}
\begin{enumerate}
\item
Let $F\in \rD^\rb_\wrc(p_X^{-1}\C_{S})$ (resp.~$F\in \rD^\rb_\wrc(p_X^{-1}\sho_{S})$). We shall say that $F$ is $S$-weakly $\C$-constructible if $SS(F)$ is $\C^*$-conic. The corresponding categories are denoted by $\rD^\rb_\wcc(p_X^{-1}\C_{S})$ (resp.~$F\in \rD^\rb_\wcc(p_X^{-1}\sho_{S})$).

\item
If $F$ belongs to $\rD^\rb_\wcc(p_X^{-1}\sho_{S})$, we say that $F$ is $S$-$\C$-constructible if $F\in \rD^\rb_\rc(p_X^{-1}\sho_{S})$, and we denote by $\rD^\rb_\cc(p_X^{-1}\sho_{S})$ the corresponding category, which is full triangulated sub-category of $\rD^\rb(p_X^{-1}\sho_S)$.
\end{enumerate}
\end{definition}

The following properties are obtained in a straightforward way, by using \cite[Th.~8.5.5]{K-S90} in a way similar to \cite[Prop.\,8.5.7]{K-S90}.

\begin{properties}\label{Properties}\mbox{}
\begin{enumerate}
\item\label{Properties1}
An object $F$ of $\rD^\rb(p_X^{-1}\sho_{S})$ belongs to $\rD^\rb_\wcc(p_X^{-1}\sho_{S})$ if and only if it belongs to $\rD^\rb_\wcc(p_X^{-1}\C_{S})$.

\item\label{Properties2}
Remark \ref{R:0} applies to $\rD^\rb_\wcc(p_X^{-1}\sho_S)$ and $\rD^\rb_\cc(p_X^{-1}\sho_S)$.

\item\label{Properties3}
Proposition \ref{P:01} applies to $\rD^\rb_\wcc$.

\item\label{Properties4}
Propositions \ref{C:CCCC}, \ref{P:07}, and Corollary \ref{C:2.12} apply to $\rD^\rb_\cc(p_X^{-1}\sho_S)$.

\item\label{Properties5}
Corollary \ref{C:2.6.1} applies to $\rD^\rb_\wcc$, $\rD^\rb_\rc$ and $\rD^\rb_\cc$.
\end{enumerate}
\end{properties}

\subsection{Duality}
According to the syzygy theorem for the regular local ring $\sho_{S,s}$ (for any $s\in S$) and e.g. \cite[Prop.\,13.2.2(ii)]{K-S06} (for the opposite category), any object of $\rD^\rb_\coh(\sho_S)$ is locally quasi-isomorphic to a bounded complex of locally free $\sho_S$-modules of finite rank $L^\sbullet$. As a consequence, the local duality functor
\[
\bD: \rD^\rb_\coh(\sho_S)\to \rD^\rb_\coh(\sho_S),\quad \bD(\shf):=\Rhom_{\sho_S}(\shf,\sho_S)
\]
is seen to be an involution, i.e., the natural morphism $\id\to\bD\circ\bD$ is an isomorphism. However, the standard t-structure
\[
\big(\rD^{\rb,\leq0}_\coh(\sho_S),\rD^{\rb,\geq0}_\coh(\sho_S)\big)
\]
defined by $\shh^jG=0$ for $j>0$ (resp.~for $j<0$) is not interchanged by duality when $\dim S\geq1$ (see e.g., \cite[Prop.\,4.3]{Kashiwara04} in the algebraic setting). Nevertheless, we have:

\begin{lemma}\label{L:221}
Let $G$ be an object of $\rD^\rb_\coh(\sho_S)$. Assume that $\bD G$ belongs to $\rD^{\rb,\leq0}_\coh(\sho_S)$. Then $G$ belongs to $\rD^{\rb,\geq0}_\coh(\sho_S)$.
\end{lemma}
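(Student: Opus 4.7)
The plan is to reduce the statement to the following implication: if $H \in \rD^{\rb,\leq0}_\coh(\sho_S)$, then $\bD H \in \rD^{\rb,\geq0}_\coh(\sho_S)$. Granting this and setting $H := \bD G$, the involutivity of $\bD$ established just above the statement immediately yields
\[
G \simeq \bD\bD G = \bD H \in \rD^{\rb,\geq0}_\coh(\sho_S),
\]
which is the desired conclusion.

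The core input for the reduced implication is Auslander--Buchsbaum (or equivalently the syzygy theorem, already invoked in the paragraph preceding the lemma) applied stalkwise: each $\sho_{S,s}$ is a regular local ring of dimension $\leq \dim S$, so every coherent $\sho_S$-module $\shf$ has projective dimension at most $\dim S$ locally. Consequently $\bD \shf = \Rhom_{\sho_S}(\shf,\sho_S)$ has cohomology concentrated in degrees $[0,\dim S]$, and in particular $\bD(\shf[0]) \in \rD^{\rb,\geq0}_\coh(\sho_S)$. With this base case in hand, I would argue by induction on the amplitude $b \geq 0$ with $H \in \rD^{\rb,[-b,0]}_\coh(\sho_S)$. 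For $b \geq 1$, I apply $\bD$ to the canonical truncation triangle $\tau_{\leq -1} H \to H \to \shh^0(H)[0] \xrightarrow{+1}$, obtaining
\[
\bD(\shh^0(H)) \longrightarrow \bD H \longrightarrow \bD(\tau_{\leq -1} H) \xrightarrow{~+1~}.
\]
The leftmost term lies in $\rD^{\rb,\geq0}_\coh$ by the base case. Setting $H' := (\tau_{\leq -1} H)[-1]$, one checks that $H' \in \rD^{\rb,[-b+1,0]}_\coh$, so the induction hypothesis gives $\bD H' \in \rD^{\rb,\geq0}_\coh$ and hence $\bD(\tau_{\leq -1} H) = (\bD H')[-1] \in \rD^{\rb,\geq1}_\coh$. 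The long exact cohomology sequence of the triangle then forces $\shh^k(\bD H) = 0$ for all $k<0$.

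The only non-formal step is the base case, which uses the regularity of $\sho_S$ through Auslander--Buchsbaum exactly once; the rest is bookkeeping with truncations and shifts under the contravariant functor $\bD$, and presents no serious obstacle. The one subtlety worth watching is the direction of shifts under $\bD$, but the cohomological accounting above is unambiguous.
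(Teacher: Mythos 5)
Your proof is correct, but it takes a genuinely different route from the paper's. After the common reduction via biduality (set $H=\bD G$ and show $H\in\rD^{\rb,\leq0}_\coh(\sho_S)\Rightarrow\bD H\in\rD^{\rb,\geq0}_\coh(\sho_S)$), the paper argues by working locally and replacing $H$ by a quasi-isomorphic bounded complex $L^\sbullet$ of locally free $\sho_S$-modules with $L^k=0$ for $k>0$; the key point is that the kernel of the surjection $L^{k-1}\to L^k$ is again locally free (the germs being projective over the local ring $\sho_{S,s}$), so one can truncate the free resolution downwards, and then $\bD L^\sbullet$ is by inspection a complex concentrated in degrees $\geq0$. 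You instead run an induction on the amplitude of $H$ using the canonical truncation triangle $\tau_{\leq -1}H\to H\to\shh^0(H)[0]\to{+1}$, whose contravariant image under $\bD$ together with the long exact sequence gives the vanishing of $\shh^k(\bD H)$ for $k<0$. One remark on your base case: you invoke Auslander--Buchsbaum to place $\bD\shf$ in degrees $[0,\dim S]$, but for the lemma only the lower bound $\geq0$ is needed, and that is the purely formal fact that $\mathrm{Ext}^j(\shf,\sho_S)=0$ for $j<0$; regularity of $\sho_{S,s}$ is only needed to ensure $\bD$ preserves $\rD^\rb_\coh$, which is established before the lemma. Your devissage thus isolates the t-structure bookkeeping from the commutative-algebra input more sharply than the paper's resolution argument, at the cost of an explicit induction; the paper's approach is marginally shorter once the free-resolution replacement is granted.
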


\begin{proof}
Setting $G'=\bD G$, the biduality isomorphism makes it equivalent to proving that $\bD G'$ belongs to $\rD^{\rb,\geq0}_\coh(\sho_S)$. The question is local on $S$ and we may therefore replace $G'$ with a bounded complex $L^\sbullet$ as above. Moreover, $L^\sbullet$ is quasi-isomorphic to such a bounded complex, still denoted by $L^\sbullet$, such that $L^k=0$ for $k>0$. Indeed, note first that the kernel $K$ of a surjective morphism of locally free $\sho_S$-modules of finite rank is also locally free of finite rank (being $\sho_S$-coherent and having all its germs $K_s$ free over $\sho_{S,s}$, because they are projective and $\sho_{S,s}$ is a regular local ring). By assumption, we have $\shh^j(L^\sbullet)=0$ for $j>0$. Let $k>0$ be such that $L^k\neq0$ and $L^\ell=0$ for $\ell>k$, and let $L^{\prime k-1}=\ker[L^{k-1}\to L^k]$. Then $L^\sbullet$ is quasi-isomorphic to $L^{\prime\sbullet}$ defined by $L^{\prime j}=L^j$ for $j<k-1$ and $L^{\prime j}=0$ for $j\geq k$. We conclude by induction on~$k$.

Now it is clear that $\bD G'\simeq \bD L^\sbullet$ is a bounded complex having terms in nonnegative degrees at most, and thus is an object of $\rD^{\rb,\geq0}_\coh(\sho_S)$.
\end{proof}

\begin{remark}
Let $G$ be an object of $\rD^\rb_\coh(\sho_S)$. Assume that $G$ and $\bD G$ belong to $\rD^{\rb,\leq0}_\coh(\sho_S)$. Then $G$ and $\bD G$ are $\sho_S$-coherent sheaves, hence $G$ and $\bD G$ are $\sho_S$-locally free.
\end{remark}

We now set $\omega_{X,S}=p_X^{-1}\sho_S[2\dim X]=p_X^!\sho_S$.

\begin{proposition}\label{P:12}
The functor $\bD:\rD^\rb(p_X^{-1}\sho_S)\to \rD^+(p_X^{-1}\sho_S)$ defined by $\bD F=\Rhom_{p_X^{-1}\sho_S}(F,\omega_{X,S})$ induces an involution $\rD^\rb_\rc(p_X^{-1}\sho_S)\to \rD^\rb_\rc(p_X^{-1}\sho_S)$ and $\rD^\rb_\cc(p_X^{-1}\sho_S)\to \rD^\rb_\cc(p_X^{-1}\sho_S)$.
\end{proposition}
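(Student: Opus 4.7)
The plan is to prove the two assertions together by reducing to classical Verdier duality on $\rD^\rb(\C_X)$ via the restriction functors $Li^*_{s_o}$ and Proposition~\ref{P:91}.

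First I would show that $\bD F\in\rD^\rb_\rc(p_X^{-1}\sho_S)$ whenever $F$ is. Since $\omega_{X,S}=p_X^{-1}\sho_S[2\dim X]$ is $S$-locally constant, Corollary~\ref{C:2.6.1} already places $\bD F$ in $\rD^\rb_\wrc(p_X^{-1}\sho_S)$. Fix a $\mu$-stratification $(X_\alpha)$ adapted to $F$. By Corollary~\ref{C:2.12} it suffices to check that $i_\alpha^!\bD F\in\rD^\rb_\lccoh(p_\alpha^{-1}\sho_S)$ for every $\alpha$. The standard $\Rhom$/$i^!$ adjunction gives
\[
i_\alpha^!\bD F\simeq\Rhom_{p_\alpha^{-1}\sho_S}\bigl(i_\alpha^{-1}F,\,i_\alpha^!\omega_{X,S}\bigr),
\]
and from $p_X\circ i_\alpha=p_\alpha$ one obtains $i_\alpha^!\omega_{X,S}=p_\alpha^!\sho_S=p_\alpha^{-1}\sho_S[\dim_\R X_\alpha]$. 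Writing locally $i_\alpha^{-1}F\simeq p_\alpha^{-1}G_\alpha$ with $G_\alpha\in\rD^\rb_\coh(\sho_S)$ (Definition~\ref{D:31}), the right-hand side becomes $p_\alpha^{-1}\Rhom_{\sho_S}(G_\alpha,\sho_S)[\dim_\R X_\alpha]$, which lies in $\rD^\rb_\lccoh$ because $\Rhom_{\sho_S}(G_\alpha,\sho_S)$ is coherent by the syzygy discussion preceding Lemma~\ref{L:221}.

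Next I would establish biduality by applying Proposition~\ref{P:91} to the canonical evaluation morphism $\eta_F:F\to\bD\bD F$. Iterating the first step gives $\bD F,\bD\bD F\in\rD^\rb_\rc$, and Remark~\ref{R:0} then supplies the finite type of cohomology stalks over $\sho_{S,s}$ that condition~(1) of Proposition~\ref{P:91} demands. For condition~(2), Proposition~\ref{C:CC} yields, using $Li^*_{s_o}\omega_{X,S}=\C_X[2\dim X]$,
\[
Li^*_{s_o}\bD F\simeq\Rhom_{\C_X}\bigl(Li^*_{s_o}F,\C_X[2\dim X]\bigr)=\rD\bigl(Li^*_{s_o}F\bigr),
\]
where $\rD$ is the classical Verdier duality on $\rD^\rb(\C_X)$. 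Iterating and invoking the naturality of Proposition~\ref{C:CC}, the morphism $Li^*_{s_o}\eta_F$ is identified with the classical biduality morphism on $Li^*_{s_o}F\in\rD^\rb_\rc(\C_X)$ (Proposition~\ref{C:CCCC}), which is an isomorphism.

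For the $\C$-constructible case I would start from an adapted $\mu$-stratification with complex analytic strata, available since $F\in\rD^\rb_\cc$. The formula of the first step shows that the same stratification is adapted to $\bD F$, and Proposition~\ref{P:6} then forces $SS(\bD F)\subset\bigsqcup_\alpha T^*_{X_\alpha}X\times T^*S$, which is $\C^*$-conic, so $\bD F\in\rD^\rb_\cc$; biduality on $\rD^\rb_\cc$ is inherited from the $\rD^\rb_\rc$ case. The main technical point will be the clean identification $i_\alpha^!\omega_{X,S}=p_\alpha^!\sho_S$ and the dimension-shift bookkeeping on each stratum; once this is in place the remainder is formal manipulation of adjunctions together with the machinery of the previous subsections and absolute Verdier duality.
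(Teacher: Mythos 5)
Your proposal is correct and essentially coincides with the paper's proof. Both arguments reduce membership in $\rD^\rb_\rc$ (resp.\ $\rD^\rb_\cc$) to the local computation $i_\alpha^!\bD F\simeq\Rhom_{p_\alpha^{-1}\sho_S}(i_\alpha^{-1}F,p_\alpha^!\sho_S)\simeq p_\alpha^{-1}\bD G_\alpha[\dim_\R X_\alpha]$ via Corollary~\ref{C:2.12}, and both prove biduality by applying Proposition~\ref{P:91} together with the commutation of $Li^*_{s_o}$ with $\bD$ from Proposition~\ref{C:CC}, thereby reducing to absolute Verdier biduality on $\rD^\rb_\rc(\C_X)$; the only cosmetic difference is that you invoke Corollary~\ref{C:2.6.1} to get weak constructibility up front, whereas the paper goes directly through Corollary~\ref{C:2.6}, and you make the $\C^*$-conicity check explicit where the paper appeals to Properties~\ref{Properties}\eqref{Properties4}.
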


We will also set $\bD'F=\Rhom_{p_X^{-1}\sho_S}(F,p_X^{-1}\sho_S)$.

\begin{proof}
Let us first show that, for $F$ in $\rD^\rb_\wrc(p_X^{-1}\sho_{S})$, the dual $\bD F$ also belongs to $\rD^\rb_\wrc(p_X^{-1}\sho_{S})$. let $(X_\alpha)$ be a $\mu$-stratification adapted to $F$. According to Corollary \ref{C:2.6}, it is enough to show that $i_\alpha^!\bD F$ has locally constant cohomology for each $\alpha$. One can use \cite[Prop.\,3.1.13]{K-S90} in our setting and get
\[
i_\alpha^!\bD F=\Rhom_{p_\alpha^{-1}\sho_S}(i_\alpha^{-1}F,\omega_{X_\alpha,S}).
\]
Locally on $X_\alpha$, $i_\alpha^{-1}F=p_\alpha^{-1}G$ for some $G$ in $\rD^\rb(\C_S)$ or $\rD^\rb(\sho_S)$. Then, locally on $X_\alpha$,
\begin{align*}
i_\alpha^!\bD F\simeq\Rhom_{p_\alpha^{-1}\sho_S}(p_\alpha^{-1}G,p_\alpha^!\sho_S)&=p_\alpha^!\Rhom_{\sho_S}(G,\sho_S)\\
&=p_\alpha^{-1}(\bD G)[2\dim X_\alpha].
\end{align*}
The proof for $F$ in $\rD^\rb_\wcc(p_X^{-1}\sho_{S})$ is similar. Moreover, by using Corollary \ref{C:2.12} instead of Corollary \ref{C:2.6} one shows that $\bD$ sends $\rD^\rb_\rc(p_X^{-1}\sho_{S})$ to itself and, according to Properties \ref{Properties}\eqref{Properties4}, $\rD^\rb_\cc(p_X^{-1}\sho_{S})$ to itself.

Let us prove the involution property. We have a natural morphism of functors $\id\to\bD\bD$. It is enough to prove the isomorphism property after applying $Li^*_{s_o}$ for each $s_o\in S$, according to Proposition \ref{P:91}. On the other hand, Proposition \ref{C:CC} implies that $Li^*_{s_o}$ commutes with $\bD$, so we are reduced to applying the involution property on $\rD^\rb_{\cc}(\C_X)$, according to the $\cc$-analogue of Proposition \ref{C:CCCC}, which is known to be true (see e.g.~\cite{K-S90}).
\end{proof}

\begin{remark}\label{R:222}
By using the biduality isomorphism and the isomorphism $i_x^!\bD F\simeq\bD i_x^{-1}F$ for $F$ in $\rD^\rb_\rc(p_X^{-1}\sho_S)$ or $\rD^\rb_\cc(p_X^{-1}\sho_S)$, where $i_x:\{x\}\times\nobreak S\hookrightarrow X\times S$ denotes the inclusion, we find a functorial isomorphism $i_x^{-1}\bD F\simeq\bD i_x^!F$.
\end{remark}

\subsection{Perversity}
We will now restrict to the case of $S$-$\C$-constructible complexes, which is the only case which will be of interest for us, although one could consider the case of $S$-$\R$-constructible complexes as in \cite[\S10.2]{K-S90}.

We define the category $\pD^{\leq0}_\cc(p_X^{-1}\sho_S)$ as the full subcategory of $\rD^\rb_\cc(p_X^{-1}\sho_S)$ whose objects are the $S$-$\C$-constructible bounded complexes~$F$ such that, for some adapted $\mu$-stratification $(X_\alpha)$ ($i_x$ is as above),
\begin{equation}\tag{Supp}\label{eq:support}
\forall\alpha,\;\forall x\in X_\alpha,\;\forall j>-\dim X_\alpha,\quad \shh^ji_x^{-1}F=0.
\end{equation}
Similarly, $\pD^{\geq0}_\cc(p_X^{-1}\sho_S)$ consists of objects $F$ such that
\begin{equation}\tag{Cosupp}\label{eq:cosupport}
\forall\alpha,\;\forall x\in X_\alpha,\;\forall j<\dim X_\alpha,\quad \shh^ji_x^!F=0.
\end{equation}

In the preceding situation in view of Corollary \ref{C:2.12} we have, similarly to \cite[Prop.10.2.4]{K-S90}:

\begin{lemma}\label{L:ana}\mbox{}
\begin{enumerate}
\item
$F\in \pD^{\leq0}_\cc(p_X^{-1}\sho_S)$ if and only if for any $\alpha$ and $j>-\dim(X_\alpha)$,
\[
\shh^j(i^{-1}_\alpha F)=0.
\]

\item
$F\in \pD^{\geq0}_\cc(p_X^{-1}\sho_S)$ if and only if for any $\alpha$ and $j<-\dim(X_\alpha)$,
\[
\shh^j(i^{!}_\alpha F)=0.
\]
\end{enumerate}
\end{lemma}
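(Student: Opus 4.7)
The plan is to convert the pointwise conditions \eqref{eq:support} and \eqref{eq:cosupport} into vanishing conditions on the restrictions $i_\alpha^{-1}F$ and $i_\alpha^!F$ over the whole stratum $X_\alpha\times S$, using $S$-local constancy as the bridge. This mirrors \cite[Prop.\,10.2.4]{K-S90}, the only new feature being that $S$-local constancy plays the role of ordinary local constancy.

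First I would observe that, when $F$ is adapted to the $\mu$-stratification $(X_\alpha)$, both $i_\alpha^{-1}F$ and $i_\alpha^!F$ have $S$-locally constant cohomology sheaves on $X_\alpha\times S$: for $i_\alpha^{-1}F$ this holds by definition, and for $i_\alpha^!F$ this is Corollary~\ref{C:2.6} (in the $\cc$ setting via Properties~\ref{Properties}\eqref{Properties4}). The elementary observation that drives the proof --- recalled right after the definition of $S$-locally constant sheaves --- is that an $S$-locally constant sheaf $\mathcal{G}$ on $X_\alpha\times S$ vanishes if and only if its restriction $i_{x,\alpha}^{-1}\mathcal{G}$ to $\{x\}\times S$ vanishes for every $x\in X_\alpha$, since locally near $(x,s)$ one has $\mathcal{G}\simeq p_\alpha^{-1}G$ with $G=\mathcal{G}|_{\{x\}\times S}$.

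For part~(1), using the factorisation $i_x=i_\alpha\circ i_{x,\alpha}$ one has $\shh^j(i_x^{-1}F)\simeq i_{x,\alpha}^{-1}\shh^j(i_\alpha^{-1}F)$, and the equivalence follows at once from the vanishing criterion above applied to the $S$-locally constant sheaves $\shh^j(i_\alpha^{-1}F)$; no degree shift enters.

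Part~(2) is analogous but contains the one technical point of the proof. From $i_x^!F=i_{x,\alpha}^!i_\alpha^!F$ and the fact that $i_{x,\alpha}$ is a smooth closed inclusion of real codimension $2\dim X_\alpha$, applied to the $S$-locally constant complex $i_\alpha^!F$, I would deduce (by the argument of \cite[Prop.\,3.1.13]{K-S90} transported to the relative setting of \S\ref{S:0}) an isomorphism
\[
i_{x,\alpha}^!i_\alpha^!F\simeq i_{x,\alpha}^{-1}(i_\alpha^!F)[-2\dim X_\alpha],
\]
the orientation sheaf being irrelevant for vanishing questions. Hence $\shh^j(i_x^!F)\simeq\shh^{j-2\dim X_\alpha}(i_{x,\alpha}^{-1}i_\alpha^!F)$, and the vanishing criterion converts the condition ``$\shh^j(i_x^!F)=0$ for $j<\dim X_\alpha$'' of \eqref{eq:cosupport} into ``$\shh^k(i_\alpha^!F)=0$ for $k<-\dim X_\alpha$'', the stated condition. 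The step deserving real care is precisely this identification $i_{x,\alpha}^!\simeq i_{x,\alpha}^{-1}[-2\dim X_\alpha]$ on $S$-locally constant complexes, since the resulting degree shift is what produces the sign flip between the pointwise formulation \eqref{eq:cosupport} and the stratum-wise formulation of the Lemma.
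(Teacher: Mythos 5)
Your proposal is correct and follows essentially the same route as the paper, which simply adapts \cite[Prop.\,10.2.4]{K-S90} via Corollary~\ref{C:2.12}: you factor $i_x=i_\alpha\circ i_{x,\alpha}$, use $S$-local constancy of $\shh^j(i_\alpha^{-1}F)$ and $\shh^j(i_\alpha^!F)$ on each stratum to convert pointwise vanishing into stratum-wise vanishing, and the identification $i_{x,\alpha}^!\simeq i_{x,\alpha}^{-1}[-2\dim X_\alpha]$ on $S$-locally constant complexes (already used by the paper in the proof of Corollary~\ref{C:2.6.1} in the form $p_\alpha^{-1}=p_\alpha^![-\dim_\R X_\alpha]$) accounts for the degree shift between \eqref{eq:cosupport} and part~(2). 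The bookkeeping of shifts is exactly right.
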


Namely, if $F\in \pD^{\leq0}_\cc(p_X^{-1}\sho_S)$ and $Z$ is a closed analytic subset of $X$ such that $\dim Z=k$, then $i_{Z\times S}^{-1} F$ is concentrated in degrees $\leq-k$,
and if $F'\in \pD^{\geq0}_\cc(p_X^{-1}\sho_S)$, then $i^!_{Z\times S} F'$ is concentrated in degrees $\geq-k$.
We have the following variant of \cite[Prop.10.2.7]{K-S90}:

\begin{proposition}\label{P:t29}
Let $F$ be an object of $\pD^{\leq0}_\wrc(p_X^{-1}\sho_S)$ and $F'$ an object of $\pD^{\geq0}_\wrc(p_X^{-1}\sho_S)$. Then
\[
\shh^j\Rhom_{p^{-1}_X\sho_S}(F,F')=0,\quad \text{for } j<0.
\]
\end{proposition}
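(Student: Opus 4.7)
The plan is a dévissage along a common adapted $\mu$-stratification $(X_\alpha)_{\alpha\in A}$, reducing to the case of a single stratum (where $F,F'$ become $S$-locally constant and the problem becomes one in $\rD^\rb(\sho_S)$). Since the statement is local on $X\times S$ one may shrink $X$ so that $A$ is finite; I then induct on $\#A$. The induction step uses the standard triangle
\[
j_!j^{-1}F\longrightarrow F\longrightarrow i_*i^{-1}F\xrightarrow{+1},
\]
where $i\colon Z\hookrightarrow X$ is the inclusion of a stratum $Z=X_{\alpha_0}$ that is closed in $X$ (for instance a minimal one for the closure order) and $j\colon U=X\setminus Z\hookrightarrow X$ is its open complement, so that $U$ inherits a $\mu$-stratification with $\#A-1$ strata.

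For the base case, $X$ is a single smooth stratum of complex dimension $n$, and locally $F\simeq p^{-1}G$, $F'\simeq p^{-1}G'$ for some $G,G'\in\rD^\rb(\sho_S)$. The support condition~\eqref{eq:support} forces $G\in\rD^{\leq -n}(\sho_S)$. Combining $p^!\simeq p^{-1}[2n]$ with the identity $i_x^!\circ p^!=\mathrm{id}$ (coming from $p\circ i_x=\mathrm{id}_S$), one obtains $i_x^!p^{-1}G'\simeq G'[-2n]$, so condition~\eqref{eq:cosupport} forces $G'\in\rD^{\geq n}(\sho_S)$. The projection-type identification already invoked in the proof of Corollary~\ref{C:2.6.1} gives, locally,
\[
\Rhom_{p^{-1}\sho_S}(p^{-1}G,p^{-1}G')\simeq p^{-1}\Rhom_{\sho_S}(G,G'),
\]
and elementary Hom-complex bookkeeping shows that the right-hand side is concentrated in cohomological degrees $\geq 2n\geq 0$, which proves the base case.

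In the inductive step, applying $\Rhom_{p^{-1}\sho_S}(-,F')$ to the triangle, together with the adjunctions $j^!=j^{-1}$ and the standard closed/open adjunction, yields
\begin{align*}
\Rhom_{p^{-1}\sho_S}(j_!j^{-1}F,F')&\simeq Rj_*\Rhom_{p_U^{-1}\sho_S}(j^{-1}F,j^{-1}F'),\\
\Rhom_{p^{-1}\sho_S}(i_*i^{-1}F,F')&\simeq i_*\Rhom_{p_Z^{-1}\sho_S}(i^{-1}F,i^!F').
\end{align*}
The conditions \eqref{eq:support}--\eqref{eq:cosupport} are inherited on $U$ by $j^{-1}F,j^{-1}F'$, so the induction hypothesis together with the left-exactness of $Rj_*$ puts the first displayed complex in degrees $\geq 0$. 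On $Z$, using $i_{x,X}^{-1}=i_{x,Z}^{-1}\circ i^{-1}$ and $i_{x,X}^!=i_{x,Z}^!\circ i^!$ for any $x\in X_\alpha\subset Z$, the perversity conditions transfer to $i^{-1}F$ and $i^!F'$; since $Z$ is a single smooth stratum, the base case applied on $Z$, followed by the exact functor $i_*$, puts the second complex in degrees $\geq 0$. The triangle then gives $\shh^j\Rhom_{p^{-1}_X\sho_S}(F,F')=0$ for $j<0$.

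The main obstacle I foresee is the base-case identification $\Rhom_{p^{-1}\sho_S}(p^{-1}G,p^{-1}G')\simeq p^{-1}\Rhom_{\sho_S}(G,G')$: it is the same projection-type formula invoked in the proof of Corollary~\ref{C:2.6.1}, and the cleanest route seems to be to shrink $X$ to a contractible open, resolve $G'$ by a bounded complex of locally free $\sho_S$-modules of finite rank (available by the regularity of $\sho_S$, as in Lemma~\ref{L:221}), and use the exactness of $p^{-1}$. Once granted, the degree estimate on the right-hand side follows directly from the respective cohomological ranges of $G$ and $G'$.
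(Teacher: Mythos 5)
Your argument is correct in overall structure but takes a genuinely different dévissage from the paper's: you peel off a closed stratum from the bottom and induct on the number of strata, using the triangle $j_!j^{-1}F\to F\to i_*i^{-1}F$ together with the adjunctions $\Rhom(j_!A,F')\simeq Rj_*\Rhom(A,j^{-1}F')$ and $\Rhom(i_*B,F')\simeq i_*\Rhom(B,i^!F')$; the paper instead argues top-down by contradiction, choosing a stratum $X_\alpha$ of \emph{maximal} dimension on which $\shh^j\Rhom(F,F')$ is nonzero for some $j<0$ and comparing $i_\alpha^!$ with $i_\alpha^{-1}$ via the triangle $i_\alpha^!\to i_\alpha^{-1}\to i_\alpha^{-1}Rj_{\alpha,*}j_\alpha^{-1}$. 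Both routes reduce in the end to the same single-stratum computation $i_\alpha^!\Rhom(F,F')\simeq\Rhom(i_\alpha^{-1}F,i_\alpha^!F')$ plus the degree bounds of Lemma~\ref{L:ana}. Your induction is slightly more bookkeeping-heavy (one must shrink $X$ to make the stratum count finite and check that the perversity conditions restrict correctly to $U$ and to $Z$), while the paper's contradiction argument avoids that; but yours is also perfectly usable and arguably more transparent about where each degree bound enters.

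Two local slips, neither fatal. First, a sign error in the base case: from $i_x^!p^{-1}G'\simeq G'[-2n]$ and (Cosupp) ($\shh^j i_x^!F'=0$ for $j<n$) you should get $G'\in\rD^{\geq-n}(\sho_S)$, not $\rD^{\geq n}(\sho_S)$; together with $G\in\rD^{\leq-n}(\sho_S)$ this places $\Rhom_{\sho_S}(G,G')$ in degrees $\geq 0$, not $\geq 2n$ — still exactly what you need. Second, your proposed justification of the base-case identification $\Rhom_{p^{-1}\sho_S}(p^{-1}G,p^{-1}G')\simeq p^{-1}\Rhom_{\sho_S}(G,G')$ via bounded locally-free resolutions of finite rank does not apply here: the proposition is stated for weakly $\R$-constructible objects, so the local models $G,G'$ are arbitrary $\sho_S$-modules, not coherent ones, and the syzygy/regularity argument of Lemma~\ref{L:221} is unavailable. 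The cleaner route is the one the paper uses in Corollary~\ref{C:2.6.1}: the $p^{-1}\sho_S$-module analogue of \cite[Prop.\,3.1.13]{K-S90}, $\Rhom_{p^{-1}\sho_S}(p^{-1}G,p^!G')\simeq p^!\Rhom_{\sho_S}(G,G')$, combined with $p^!=p^{-1}[2n]$. With those two corrections your proof is sound.
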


\begin{proof}
Let $(X_\alpha)$ be a $\mu$-stratification of $X$ adapted to $F$ and $F'$. By assumption, for each $\alpha$, $i_\alpha^{-1}\shh^jF=\shh^ji_\alpha^{-1}F=0$ for $j>-\dim X_\alpha$. Similarly, $\shh^ji_\alpha^!F'=0$ for $j<-\dim X_\alpha$.

Let $X_\alpha$ be a stratum of maximal dimension such that
\[
i_\alpha^{-1}\shh^j\Rhom_{p^{-1}_X\sho_S}(F,F')\neq0\quad\text{for some $j<0$}.
\]
Let $V$ be an open neighbourhood of $X_\alpha$ in $X$ such that $V\setminus X_\alpha$ intersects only strata of dimension $>\dim X_\alpha$, and let $j_\alpha:(V\setminus X_\alpha)\times S\hookrightarrow V\times S$ be the inclusion. Then the complex $i_\alpha^{-1}Rj_{\alpha,*}j_\alpha^{-1}\Rhom_{p^{-1}_X\sho_S}(F,F')$ has nonzero cohomology in nonnegative degrees only: indeed, by the definition of $X_\alpha$, this property holds for $j_\alpha^{-1}\Rhom_{p^{-1}_X\sho_S}(F,F')$, hence it holds for $Rj_{\alpha,*}j_\alpha^{-1}\Rhom_{p^{-1}_X\sho_S}(F,F')$, and then clearly for the complex $i_\alpha^{-1}Rj_{\alpha,*}j_\alpha^{-1}\Rhom_{p^{-1}_X\sho_S}(F,F')$. From the distinguished triangle
\begin{multline*}
i_\alpha^!\Rhom_{p^{-1}_X\sho_S}(F,F')\to i_\alpha^{-1}\Rhom_{p^{-1}_X\sho_S}(F,F')\\
\to i_\alpha^{-1}Rj_{\alpha,*}j_\alpha^{-1}\Rhom_{p^{-1}_X\sho_S}(F,F')\xrightarrow{+1}
\end{multline*}
we conclude that $\shh^ji_\alpha^!\Rhom_{p^{-1}_X\sho_S}(F,F')\to\shh^j i_\alpha^{-1}\Rhom_{p^{-1}_X\sho_S}(F,F')= i_\alpha^{-1}\shh^j\Rhom_{p^{-1}_X\sho_S}(F,F')$ is an isomorphism for all $j<0$. Therefore, we obtain, for this stratum $X_\alpha$ and for any $j<0$,
\begin{align*}
i_\alpha^{-1}\shh^j\Rhom_{p^{-1}_X\sho_S}(F,F')&\simeq\shh^ji_\alpha^!\Rhom_{p^{-1}_X\sho_S}(F,F')\\
&\simeq\shh^j\Rhom_{p^{-1}_X\sho_S}(i_\alpha^{-1}F,i_\alpha^!F').
\end{align*}
Since $i_\alpha^{-1}F$ has nonzero cohomology in degrees $\leq-\dim X_\alpha$ at most and $i_\alpha^!F'$ in degrees $\geq-\dim X_\alpha$ at most, $\shh^j\Rhom_{p^{-1}_X\sho_S}(i_\alpha^{-1}F,i_\alpha^!F')=0$ for $j<0$, a contradiction with the definition of $X_\alpha$.
\end{proof}

\begin{theorem}\label{T:2.18}
$\pD^{\leq0}_\cc(p_X^{-1}\sho_S)$ and $\pD^{\geq0}_\cc(p_X^{-1}\sho_S)$ form a t-structure of $\rD^\rb_\cc(p_X^{-1}\sho_S)$, whose heart is denoted by $\mathrm{Perv}(p_X^{-1}\sho_S)$.
\end{theorem}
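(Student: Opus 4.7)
The plan is to verify the three axioms of a t-structure on $\rD^\rb_\cc(p_X^{-1}\sho_S)$. Axiom (i), that $\pD^{\leq 0}_\cc[1]\subset \pD^{\leq 0}_\cc$ and $\pD^{\geq 0}_\cc \subset \pD^{\geq 0}_\cc[1]$, is immediate from the definitions \eqref{eq:support} and \eqref{eq:cosupport} by unwinding the action of the shift on the conditions (Supp) and (Cosupp). For the orthogonality axiom (ii), take $F\in\pD^{\leq 0}_\cc$ and $F'\in\pD^{\geq 1}_\cc$: since $F'[1]\in\pD^{\geq 0}_\cc\subset\pD^{\geq 0}_\wrc$, Proposition \ref{P:t29} gives $\shh^j R\shh om_{p^{-1}_X\sho_S}(F,F'[1])=0$ for $j<0$, hence $R\shh om_{p^{-1}_X\sho_S}(F,F')$ is concentrated in degrees $\geq 1$ for the standard t-structure on $\rD^\rb(p_X^{-1}\sho_S)$; the left t-exactness of $R\Gamma(X\times S,-)$ then yields $\Hom(F,F')=\shh^0 R\Hom(F,F')=0$.

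The substantial axiom is (iii), the existence, for every $F\in\rD^\rb_\cc(p_X^{-1}\sho_S)$, of a distinguished triangle $F^{\leq 0}\to F\to F^{\geq 1}\xrightarrow{+1}$ with $F^{\leq 0}\in\pD^{\leq 0}_\cc$ and $F^{\geq 1}\in\pD^{\geq 1}_\cc$. I would follow the pattern of the proof of \cite[Thm.~10.2.8]{K-S90}, constructing the truncation functors by induction along a Whitney stratification $(X_\alpha)$ adapted to $F$, obtained by refining an adapted $\mu$-stratification via Corollary \ref{C:2.12}. The base case, when $F$ is supported in a single closed smooth stratum $Y$ of complex dimension $d$ with $F\simeq i_*p_Y^{-1}G$ locally for some $G\in\rD^\rb_\coh(\sho_S)$, is handled by applying the standard truncation functors $\tau^{\leq -d}_{\mathrm{st}}$ and $\tau^{\geq -d+1}_{\mathrm{st}}$ on $\rD^\rb_\coh(\sho_S)$ to $G$ and pushing forward by the closed inclusion. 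The inductive step picks a maximal-dimensional stratum $X_\alpha$ in an open $U\subset X$ in which it is closed, with open complement $j:U\setminus X_\alpha\hookrightarrow U$ and closed inclusion $i:X_\alpha\hookrightarrow U$, and applies the Beilinson--Bernstein--Deligne gluing of t-structures: the truncation on $U$ is assembled from the one on $U\setminus X_\alpha$ (provided by induction) and the one on $X_\alpha$ (base case), using the distinguished triangles $Rj_!j^{-1}F\to F\to Ri_*i^{-1}F\xrightarrow{+1}$ and $Ri_*i^!F\to F\to Rj_*j^{-1}F\xrightarrow{+1}$. At every step the constructibility of the candidate truncations is preserved by Properties \ref{Properties}, Proposition \ref{P:07} and Corollary \ref{C:2.12}.

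The principal obstacle lies in the absence of a self-dual t-structure on $\rD^\rb_\coh(\sho_S)$ when $\dim S\geq 1$, as discussed before Lemma \ref{L:221}. Unlike the classical $\C$-perverse theory, Verdier duality (Proposition \ref{P:12}) does not interchange $\pD^{\leq 0}_\cc$ and $\pD^{\geq 0}_\cc$, so the cosupport axiom cannot be reduced to the support one by applying $\bD$, and both halves of the induction must be handled independently. A concrete reflection of this asymmetry on a single smooth stratum $Y$ of complex dimension $d$ is that $i_x^{-1}(i_*p_Y^{-1}G)\simeq G$ whereas $i_x^!(i_*p_Y^{-1}G)\simeq G[-2d]$, so the perverse heart on such a stratum consists of complexes of $\sho_S$-modules whose cohomology lies in the interval $[-3d,-d]$ rather than in a single degree. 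Once the truncation triangles are constructed, their uniqueness and the functoriality of the truncation operation follow from axiom (ii) by the standard t-structure formalism, and $\mathrm{Perv}(p_X^{-1}\sho_S)$ is then automatically an abelian category.
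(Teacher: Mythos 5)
Your overall structure matches the paper's sketch exactly: axiom (i) by inspection, axiom (ii) via Proposition \ref{P:t29}, and axiom (iii) by running the inductive gluing argument of \cite[Th.~10.2.8(c)]{K-S90} along a stratification, so the approach is the same.

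There is, however, a computational error in your ``concrete reflection of the asymmetry.'' On a smooth stratum $Y$ of complex dimension $d$, with $F=i_*p_Y^{-1}G$ and $x\in Y$, you correctly have $i_x^{-1}F\simeq G$ and $i_x^!F\simeq G[-2d]$. The support condition \eqref{eq:support} gives $\shh^jG=0$ for $j>-d$, and the cosupport condition \eqref{eq:cosupport} reads $\shh^j(G[-2d])=0$ for $j<d$, i.e., $\shh^k G=0$ for $k<-d$. So $G$ is forced into the single degree $-d$, exactly as in the classical case, not the range $[-3d,-d]$. (You seem to have applied the shift as if the cosupport condition were $j<-\dim X_\alpha$ for $i_x^!$, which is the form it takes for $i_\alpha^!$ in Lemma \ref{L:ana}, not for $i_x^!$.) In fact this error contradicts your own base-case recipe, which correctly uses $\tau^{\leq-d}_{\mathrm{st}}$ and $\tau^{\geq-d+1}_{\mathrm{st}}$ on $\rD^\rb_\coh(\sho_S)$. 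The genuine asymmetry when $\dim S\geq1$ is not in the support/cosupport degree ranges on a stratum; it is that $\bD$ on $\rD^\rb_\coh(\sho_S)$ does not preserve the standard t-structure, so $\bD(i_*p_Y^{-1}G[d])=i_*p_Y^{-1}(\bD G)[d]$ need not be perverse. This does not obstruct the existence of the t-structure, since the gluing of \cite[Th.~10.2.8(c)]{K-S90} never invokes duality to produce the cosupport truncation; your remark that ``both halves must be handled independently'' is therefore true but not an obstacle beyond what the cited proof already does.
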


\begin{proof}[Sketch of proof]
We have to prove:
\begin{enumerate}
\item\label{T:2.181}
$\pD^{\leq0}_\cc\subset \pD^{\leq1}_\cc$ and
$\pD^{\geq0}_\cc\supset \pD^{\geq1}_\cc$.

\item\label{T:2.182}
For $F\in \pD^{\leq0}_\cc(p_X^{-1}\sho_S)$ and $F'\in \pD^{\geq1}_\cc(p_X^{-1}\sho_S)$,
\[
\Hom_{\rD^\rb(p_X^{-1}\sho_S)}(F,F')=0.
\]

\item\label{T:2.183}
For any $F\in \rD^\rb_\cc(p_X^{-1}\sho_S)$ there exist $F'\in \pD^{\leq0}_\cc(p_X^{-1}\sho_S)$ and $F''\in \pD^{\geq1}_\cc(p_X^{-1}\sho_S)$, giving rise to a distinguished triangle
$F'\to F\to F''\overset{+1}{\to}$.
\end{enumerate}

Then, following the line of the proof of \cite[Theorem 10.2.8]{K-S90}, we observe that \eqref{T:2.181} is obvious and \eqref{T:2.182} follows from Proposition \ref{P:t29}. Now, \eqref{T:2.183} is deduced by mimicking stepwise the proof of (c) in \cite[Theorem 10.2.8]{K-S90}.\end{proof}

According to the preliminary remarks before Lemma \ref{L:221}, one cannot expect that the previous t-structure is interchanged by duality when $\dim S\geq\nobreak1$. However we have:

\begin{proposition}\label{P:227}
Let $F$ be an object of $\pD^{\leq0}_\cc(p_X^{-1}\sho_S)$ such that $\bD F$ also belongs to $\pD^{\leq0}_\cc(p_X^{-1}\sho_S)$. Then $F$ and $\bD F$ are objects of $\mathrm{Perv}(p_X^{-1}\sho_S)$.
\end{proposition}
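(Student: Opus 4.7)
By Proposition \ref{P:12}, $\bD$ preserves $\rD^\rb_\cc(p_X^{-1}\sho_S)$, so $F$ and $\bD F$ admit a common adapted $\mu$-stratification $(X_\alpha)$. Since $F\in\pD^{\leq0}_\cc$ by hypothesis, to prove that $F$ is perverse it suffices, by Lemma \ref{L:ana}, to establish the cosupport condition $\shh^j i_x^! F=0$ for every $x\in X_\alpha$ and every $j<-d_\alpha$, where $d_\alpha:=\dim X_\alpha$. The statement for $\bD F$ will then follow by symmetry, using the biduality isomorphism of Proposition \ref{P:12} which makes the two hypotheses interchangeable.

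The plan is to transport the support information on $\bD F$ into cosupport information on $F$ via Remark \ref{R:222}, which supplies the functorial isomorphism
\[
i_x^{-1}\bD F \simeq \bD\bigl(i_x^! F\bigr)
\]
in $\rD^\rb_\coh(\sho_S)$. Setting $H:=i_x^! F$, the hypothesis $\bD F\in\pD^{\leq0}_\cc$ combined with this isomorphism yields $\shh^j(\bD H)=0$ for all $j>-d_\alpha$, i.e.\ $\bD H\in\rD^{\rb,\leq-d_\alpha}_\coh(\sho_S)$. Since $d_\alpha\geq0$, the shift $\bD(H[-d_\alpha])=(\bD H)[d_\alpha]$ still lies in $\rD^{\rb,\leq0}_\coh(\sho_S)$, and so Lemma \ref{L:221} applies to $H[-d_\alpha]$ and produces $H[-d_\alpha]\in\rD^{\rb,\geq0}_\coh(\sho_S)$. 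Unshifting gives the required vanishing $\shh^j H=0$ for $j<-d_\alpha$, and hence $F\in\mathrm{Perv}(p_X^{-1}\sho_S)$. The statement for $\bD F$ is obtained by running exactly the same argument with the roles of $F$ and $\bD F$ exchanged.

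The only delicate point is that, when $\dim S\geq1$, duality does \emph{not} directly interchange $\pD^{\leq0}_\cc$ and $\pD^{\geq0}_\cc$; this is why the naive argument (apply $\bD$ to the support condition on $\bD F$ and conclude a cosupport condition on $F$) fails. The role of Lemma \ref{L:221} is precisely to recover the one-directional implication we need: a bound $\bD H\in\rD^{\rb,\leq0}_\coh$ still forces $H\in\rD^{\rb,\geq0}_\coh$. The compatibility of this statement with the shift by the non-negative integer $d_\alpha$ is what makes the whole argument work, and constitutes the main (mild) subtlety of the proof.
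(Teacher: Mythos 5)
Your strategy matches the paper's exactly: use Remark~\ref{R:222} to write $i_x^!F\simeq\bD(i_x^{-1}\bD F)$, then feed the support bound on $\bD F$ into Lemma~\ref{L:221} after a shift. However, you have the \emph{sign of the shift wrong}, and this traces back to a confusion between $i_x^!$ and $i_\alpha^!$.

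You assert that the cosupport condition to be established is $\shh^ji_x^!F=0$ for $j<-d_\alpha$, citing Lemma~\ref{L:ana}. But Lemma~\ref{L:ana}(2) concerns $i_\alpha^!$ (restriction to the stratum $X_\alpha\times S$, followed by locally constant cohomology), not $i_x^!$ (restriction to the point $\{x\}\times S$); the two conditions differ by a shift of $\dim_\R X_\alpha=2d_\alpha$. The definition~\eqref{eq:cosupport} that actually involves $i_x^!$ reads $\shh^ji_x^!F=0$ for $j<+d_\alpha$, i.e.\ you must show $H:=i_x^!F\in\rD^{\rb,\geq d_\alpha}_\coh(\sho_S)$. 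Your computation only delivers $H\in\rD^{\rb,\geq -d_\alpha}_\coh(\sho_S)$, which is strictly weaker whenever $d_\alpha>0$.

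Concretely: you apply Lemma~\ref{L:221} to $H[-d_\alpha]$, observing that $\bD(H[-d_\alpha])=(\bD H)[d_\alpha]$ lies in $\rD^{\rb,\leq-2d_\alpha}_\coh\subset\rD^{\rb,\leq0}_\coh$. The extra slack of $2d_\alpha$ in that inclusion (which is why you needed $d_\alpha\geq0$) is the symptom of the wrong shift. The correct move, as in the paper, is to apply Lemma~\ref{L:221} to $H[+d_\alpha]$: then $\bD(H[d_\alpha])=(\bD H)[-d_\alpha]$, and since $\bD H=i_x^{-1}\bD F\in\rD^{\rb,\leq-d_\alpha}_\coh(\sho_S)$ (from the support condition~\eqref{eq:support} for $\bD F$), this lies exactly in $\rD^{\rb,\leq0}_\coh(\sho_S)$, with no slack. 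Lemma~\ref{L:221} then gives $H[d_\alpha]\in\rD^{\rb,\geq0}_\coh(\sho_S)$, i.e.\ $H\in\rD^{\rb,\geq d_\alpha}_\coh(\sho_S)$, which is~\eqref{eq:cosupport}. With that sign corrected, your argument coincides with the paper's.
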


\begin{proof}
Let us fix $x\!\in X_\alpha$. We have $i_x^!F\simeq \bD(i_x^{-1}\bD F)$, as already observed in Remark \ref{R:222}. By assumption $G:=i_x^{-1}\bD F$ belongs to $\rD^{\rb,\leq-\dim X_\alpha}_\coh(\sho_S)$, and Lemma \ref{L:221} suitably shifted and applied to $\bD G$ implies that $\bD G$ belongs to $\rD^{\rb,\geq\dim X_\alpha}_\coh(\sho_S)$, which is the cosupport condition (Cosupp) for~$F$.
\end{proof}

Assume $F\in \mathrm{Perv}(p_X^{-1}\sho_S)$. The description of the dual standard t\nobreakdash-structure on $\rD^\rb_\coh(\sho_S)$ given in \cite[\S4]{Kashiwara04} supplies the following refinement to \eqref{eq:support} and \eqref{eq:cosupport} when $\bD F$ is also perverse.

\begin{corollary}\label{C:tmf2}
Let $F\in \mathrm{Perv}(p_X^{-1}\sho_S)$ and assume that $\bD F\in \mathrm{Perv}(p_X^{-1}\sho_S)$. Let $(X_\alpha)$ be a stratification adapted to $F$. Then for each $\alpha$, each $x\in X_\alpha$ and each closed analytic subset $Z\subset S$, we have
\begin{equation}\tag{Cosupp$+$}\label{eq:cosupportplus}
\shh^k(i^!_{Z\times\{x\}}F)=0,\quad\forall k<\codim_SZ+\dim X_\alpha.
\end{equation}
\end{corollary}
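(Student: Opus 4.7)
The plan is to reduce the refined cosupport \eqref{eq:cosupportplus} to the description of the dual standard t-structure on $\rD^\rb_\coh(\sho_S)$ from \cite[\S4]{Kashiwara04} that the sentence preceding the corollary explicitly invokes; the other ingredients are formal consequences of biduality and of Remark~\ref{R:222}.

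First I would factor the inclusion $i_{Z\times\{x\}}:Z\times\{x\}\hookrightarrow X\times S$ as the composition of $i_x:\{x\}\times S\hookrightarrow X\times S$ with the inclusion $\iota_Z:Z\hookrightarrow\{x\}\times S\simeq S$ induced by $Z\hookrightarrow S$. This gives $i^!_{Z\times\{x\}}F\simeq\iota_Z^!\,i_x^!F$, so that upon setting $H:=i_x^!F$ the assertion \eqref{eq:cosupportplus} becomes the vanishing $\shh^k(\iota_Z^!H)=0$ for $k<\codim_SZ+\dim X_\alpha$. Note that $H$ belongs to $\rD^\rb_\coh(\sho_S)$, since $F\in\rD^\rb_\cc(p_X^{-1}\sho_S)$ (Corollary~\ref{C:2.12} together with Remark~\ref{R:0}).

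Next I would translate the perversity of $F$ and of $\bD F$ into a two-sided bound on $H$. The cosupport \eqref{eq:cosupport} for~$F$ gives $H\in\rD^{\rb,\geq\dim X_\alpha}_\coh(\sho_S)$. By Remark~\ref{R:222} one has $\bD H\simeq i_x^{-1}\bD F$, and the support condition \eqref{eq:support} applied to the perverse complex $\bD F$ then produces $\bD H\in\rD^{\rb,\leq-\dim X_\alpha}_\coh(\sho_S)$. Shifting by $\dim X_\alpha$, the complex $H':=H[\dim X_\alpha]$ simultaneously satisfies $H'\in\rD^{\rb,\geq0}_\coh(\sho_S)$ and $\bD H'\in\rD^{\rb,\leq0}_\coh(\sho_S)$.

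Finally I would invoke the description of the dual standard t-structure on $\rD^\rb_\coh(\sho_S)$ given in \cite[\S4]{Kashiwara04}: a coherent complex $H'$ satisfying the preceding two-sided bound enjoys the codimension-weighted cosupport estimate $\iota_Z^!H'\in\rD^{\rb,\geq\codim_SZ}_\coh$ for every closed analytic $Z\subset S$. Unshifting by $\dim X_\alpha$ yields exactly \eqref{eq:cosupportplus}. All the substance of the argument is concentrated in this last step: it is precisely the structural theorem of \cite[\S4]{Kashiwara04} on a regular base that upgrades the bi-boundedness of the pair $(H',\bD H')$ with respect to the standard t-structure into the quantitative cosupport shift by $\codim_SZ$. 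The earlier steps rely only on the biduality of Proposition~\ref{P:12} and the commutation isomorphism of Remark~\ref{R:222}.
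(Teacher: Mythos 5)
Your proposal is correct, and it is in fact the argument the authors intend: the paper itself offers no written proof, only the pointer to \cite[\S4]{Kashiwara04} in the sentence preceding the statement, so your job was precisely to unwind that reference. You do so in the natural way: factoring $i_{Z\times\{x\}}$ through $i_x$, setting $H=i_x^!F$, using \eqref{eq:cosupport} for $F$ and then Remark~\ref{R:222} together with \eqref{eq:support} for $\bD F$ to place the pair $(H,\bD H)$ in $\rD^{\rb,\geq\dim X_\alpha}_\coh\times\rD^{\rb,\leq-\dim X_\alpha}_\coh$, and finally invoking Kashiwara's characterization of $\bD\big(\rD^{\rb,\leq0}_\coh(\sho_S)\big)$. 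This is exactly the mechanism behind the authors' remark, and it extends the special case $Z=S$ already established in Proposition~\ref{P:227} via Lemma~\ref{L:221} (which, concretely, says that locally $H$ is represented by a bounded complex of free $\sho_S$-modules sitting in degrees $\geq\dim X_\alpha$, whence $R\Gamma_Z$ of it sits in degrees $\geq\codim_SZ+\dim X_\alpha$ because $\sho_S$ has depth $\codim_SZ$ along $Z$). Two small remarks: the hypothesis $H'\in\rD^{\rb,\geq0}_\coh$ in your step~5 is redundant, since by Lemma~\ref{L:221} it already follows from $\bD H'\in\rD^{\rb,\leq0}_\coh$; and the cohomology sheaves of $\iota_Z^!H'$ are in general not $\sho_S$-coherent, so the subscript $\coh$ in ``$\iota_Z^!H'\in\rD^{\rb,\geq\codim_SZ}_\coh$'' should be dropped — but only the vanishing of $\shh^k$ for $k<\codim_SZ$ is needed, so this does not affect the argument.
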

(The perversity of $F$ only gives the previous property when $Z=S$.) 

\section{The de~Rham complex of a holonomic $\shd_{X\times S/S}$-module}

In what follows $X$ and $S$ denote complex manifolds and we set $n=\dim X$, $\ell=\dim S$. We shall keep the notation of the preceding section. Let $\pi:T^*(X\times S)\to T^*X\times S$ denote the projection and let $\shd_{X\times S/S}$ denote the subsheaf of $\shd_{X\times S}$ of relative differential operators with respect to $p_X$ (see \cite[\S2.1 \& 2.2]{Sch-Sch94}).

Recall that $p^{-1}_X\sho_{S}$ is contained in the center of $\shd_{X\times S/S}$. With the same proof as for Proposition \ref{C:CC} we obtain:

\begin{proposition}\label{P:5}
Let $s_o\in S$ be given. Let $\shm$ and $\shn$ be objects of $\rD^\rb(\shd_{X\times S/S})$. Then, there is a well-defined natural morphism
\[
Li^*_{s_o}(\Rhom_{\shd_{X\times S/S}}(\shm, \shn))\to \Rhom_{i^*_{s_o}(\shd_{X\times S/S})}(Li^*_{s_o}(\shm), Li^*_{s_o}(\shn))
\]
which is an isomorphism in $\rD^\rb(\C_X)$.
\end{proposition}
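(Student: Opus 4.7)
The plan is to mimic the proof of Proposition~\ref{C:CC}, now in the non-commutative setting of $\shd_{X\times S/S}$. The crucial observation is that $p_X^{-1}\sho_S$ lies in the center of $\shd_{X\times S/S}$, so that $\mathfrak{m}_{s_o}\shd_{X\times S/S}$ is a two-sided ideal and
\[
i^*_{s_o}(\shd_{X\times S/S})=\shd_{X\times S/S}\otimes_{p_X^{-1}\sho_S}p_X^{-1}(\sho_S/\mathfrak{m}_{s_o})
\]
is again a sheaf of rings, which naturally acts on $Li^*_{s_o}\shm$ and $Li^*_{s_o}\shn$. The existence of the natural morphism then comes from formula (A.10) of \cite{Kashiwara03} applied to $\shd_{X\times S/S}$ in place of $p_X^{-1}\sho_S$: one picks a flat resolution $\shl^\sbullet\to\shm$ by $\shd_{X\times S/S}$-modules, represents $\Rhom_{\shd_{X\times S/S}}(\shm,\shn)$ by $\shh\!om^\sbullet_{\shd_{X\times S/S}}(\shl^\sbullet,\shn)$, and uses the evaluation pairing together with the derived base change along $p_X^{-1}\sho_S\to p_X^{-1}(\sho_S/\mathfrak{m}_{s_o})$ to build the desired map.

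To show that this morphism is an isomorphism, I would verify the coherence hypotheses required by the argument immediately following (A.10) in loc.\,cit. First, the sheaf of rings $\shd_{X\times S/S}$ is coherent; this is a standard consequence of the coherence of its associated graded ring under the relative order filtration (see \cite[\S2.2]{Sch-Sch94}). Second, $i^*_{s_o}(\shd_{X\times S/S})$ is $\shd_{X\times S/S}$-coherent, being the cokernel of a morphism $\shd_{X\times S/S}^{\dim S}\to\shd_{X\times S/S}$ given by multiplication by a regular system of parameters for $\mathfrak{m}_{s_o}$ (the image being a two-sided ideal by the centrality of $p_X^{-1}\sho_S$). With these coherence conditions in hand, the argument of Kashiwara applies word for word.

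The only real subtlety I foresee is the bookkeeping of left/right module structures imposed by the non-commutativity of $\shd_{X\times S/S}$: one must check that the base-change morphism constructed in the first paragraph is indeed defined over $i^*_{s_o}(\shd_{X\times S/S})$ and not merely over $p_X^{-1}(\sho_S/\mathfrak{m}_{s_o})$. This is once again handled by the centrality of $p_X^{-1}\sho_S$, which ensures that the flat resolution $\shl^\sbullet$ can be pushed down through $Li^*_{s_o}$ while retaining its $\shd_{X\times S/S}$-module structure, but it is what distinguishes the present argument from the purely commutative one of Proposition~\ref{C:CC}.
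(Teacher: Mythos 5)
Your proof is correct and follows the same route as the paper, which simply observes that $p_X^{-1}\sho_S$ is central in $\shd_{X\times S/S}$ and then says ``with the same proof as for Proposition~\ref{C:CC}.'' Your elaboration --- invoking (A.10) of Kashiwara for the existence of the morphism, checking the coherence of $\shd_{X\times S/S}$ and the $\shd_{X\times S/S}$-coherence of $i^*_{s_o}(\shd_{X\times S/S})$, and using centrality of $p_X^{-1}\sho_S$ to make $i^*_{s_o}(\shd_{X\times S/S})$ a quotient ring acting on both sides --- is exactly what that phrase unpacks to.
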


\subsection{Duality for coherent $\shd_{X\times S/S}$-modules}
We refer for instance to \cite[Appendix]{Kashiwara03} for the coherence properties of the ring $\shd_{X\times S/S}$. The classical methods used in the absolute case, i.e, for coherent $\mathcal{D}_X$-objects (see for instance \cite[Prop. 2.1.16]{Mebkhout89}, \cite[Prop. 2.7-3]{Mebkhout04}) apply here:

\begin{proposition}\label{P:4}
Let $\shm$ be a coherent $\shd_{X\times S/S}$-module. Then $\shm$ locally admits a resolution of length at most $2n+\ell$ by free $\shd_{X\times S/S}$-modules of finite rank.
\end{proposition}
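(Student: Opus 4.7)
The strategy is to transcribe the classical argument for $\shd_X$, replacing the base ring $\sho_X$ by $\sho_{X\times S}$ and exploiting the fact that $p_X^{-1}\sho_S$ sits in the centre of $\shd_{X\times S/S}$. The only new input compared with the absolute case is that the relevant associated graded ring now has Krull dimension $2n+\ell$ instead of $2n$, reflecting the extra parameter directions in $S$.

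Concretely, I would fix a point $(x_o,s_o)\in X\times S$, choose local coordinates $(x_1,\dots,x_n)$ on $X$ near $x_o$, and present $\shd_{X\times S/S}$ locally as $\sho_{X\times S}\langle\partial_{x_1},\dots,\partial_{x_n}\rangle$. Equip it with the standard filtration $F_\bullet$ by order of relative differential operators; the associated graded sheaf identifies with $\sho_{X\times S}[\xi_1,\dots,\xi_n]\simeq\mathrm{Sym}_{\sho_{X\times S}}(p_X^{-1}\Theta_X)$, whose stalk at $(x_o,s_o)$ is the polynomial ring in $n$ indeterminates over the regular analytic local ring $\sho_{X\times S,(x_o,s_o)}$ of Krull dimension $n+\ell$. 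Hence this graded stalk has global homological dimension exactly $2n+\ell$. Choosing a local good filtration on a coherent $\shm$, the graded module $\mathrm{gr}^F\shm$ is coherent over $\mathrm{gr}^F\shd_{X\times S/S}$ and therefore admits, locally, a resolution of length at most $2n+\ell$ by free graded modules of finite rank. A standard lifting argument — using that a filtered morphism whose associated graded is surjective (resp.\ bijective) is itself surjective (resp.\ bijective), once the filtrations are positive, exhaustive and separated — then produces a resolution of $\shm$ of the same length by free $\shd_{X\times S/S}$-modules of finite rank.

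The main, and in fact essentially the only, nontrivial point in this transcription is the computation that the graded stalk has global dimension $2n+\ell$; this reduces to the classical formula $\mathrm{gldim}\,R[\xi_1,\dots,\xi_n]=\mathrm{gldim}\,R+n$ for a regular Noetherian local ring $R$, applied to $R=\sho_{X\times S,(x_o,s_o)}$, combined with the fact that this analytic local ring is regular of dimension $n+\ell$. Granting this, the construction of the resolution and the control of its length are a word-for-word repetition of \cite[Prop.\,2.1.16]{Mebkhout89} or \cite[Prop.\,2.7-3]{Mebkhout04}, with $n$ replaced by $n+\ell$ in the Krull dimension of the base; no new difficulty specific to the parameter space $S$ appears.
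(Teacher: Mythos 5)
Your proposal is correct and follows exactly the route the paper has in mind: the paper's ``proof'' is a one-line citation of the classical argument for coherent $\shd_X$-modules (Mebkhout, Prop.~2.1.16 resp.\ Prop.~2.7-3), and you have reconstructed that argument faithfully, with the only genuinely new computation being that $\mathrm{gr}^F\shd_{X\times S/S}$ has stalks $\sho_{X\times S,(x_o,s_o)}[\xi_1,\dots,\xi_n]$ of global dimension $(n+\ell)+n=2n+\ell$, which is precisely where the bound $2n+\ell$ comes from.
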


Proposition \ref{P:4} and \cite[Prop.\,13.2.2(ii)]{K-S06} (for the opposite category) imply:
\begin{corollary}\label{C:8}
Let $\shm\in \rD^\rb_\coh(\shd_{X\times S/S})$. Let us assume that $\shm$ is concentrated in degrees $[a,b]$. Then, in a neighborhhod of each $(x,z)\in X\times S$, there exist a complex $\shl^\sbullet$ of free $\shd_{X\times S/S}$-modules of finite rank concentrated in degrees $[a-2n-\ell, b]$ and a quasi-isomorphism $ \shl^\sbullet\to \shm$.
\end{corollary}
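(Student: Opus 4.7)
The plan is to deduce the corollary from Proposition~\ref{P:4} by the standard homological-algebra argument that over a coherent sheaf of rings of finite global dimension, every bounded complex of coherent modules admits, locally, a bounded resolution by free modules of finite rank with controlled amplitude. This is essentially the content of \cite[Prop.\,13.2.2(ii)]{K-S06} applied to the opposite category, but it can also be reconstructed directly by induction on the cohomological amplitude $b-a$ of $\shm$, which is the route I would take for clarity.

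The base case $b=a$ is immediate: up to a shift, $\shm$ is quasi-isomorphic to a single coherent $\shd_{X\times S/S}$-module, which by Proposition~\ref{P:4} locally admits a free resolution of length at most $2n+\ell$, giving a complex concentrated in degrees $[a-2n-\ell,a]$. For the inductive step with $b>a$, I would exploit the canonical truncation triangle
\[
\tau^{\leq b-1}\shm \longrightarrow \shm \longrightarrow \shh^b(\shm)[-b]\xrightarrow{+1}.
\]
By induction, $\tau^{\leq b-1}\shm$, concentrated in $[a,b-1]$, admits locally a free resolution $\shl_1^\sbullet$ in degrees $[a-2n-\ell,b-1]$, while the base case applied to the single coherent module $\shh^b(\shm)$ yields a free resolution $\shl_2^\sbullet\to\shh^b(\shm)[-b]$ in degrees $[b-2n-\ell,b]$.

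It then remains to assemble these two pieces into a single resolution of $\shm$. Because $\shl_2^\sbullet[-1]$ is a bounded complex of free (hence projective) $\shd_{X\times S/S}$-modules, the connecting morphism $\shh^b(\shm)[-b-1]\to\tau^{\leq b-1}\shm$ in $\rD^\rb(\shd_{X\times S/S})$ can be represented by a genuine chain map $\varphi\colon\shl_2^\sbullet[-1]\to\shl_1^\sbullet$. The mapping cone of $\varphi$ then has terms $\shl_2^j\oplus\shl_1^j$, hence is a complex of free $\shd_{X\times S/S}$-modules of finite rank, concentrated in degrees $[a-2n-\ell,b]$, and quasi-isomorphic to $\shm$ by the defining property of the connecting morphism. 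The main point requiring care is precisely this lifting of the connecting morphism to an actual chain map, which rests on the standard fact that morphisms in the derived category out of a bounded complex of projectives are computed at the chain level up to homotopy; once that is granted, the rest is a routine verification of the degree bounds and of freeness.
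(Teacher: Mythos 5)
Your argument is correct and is in substance the same as the paper's: the paper simply combines Proposition~\ref{P:4} with \cite[Prop.\,13.2.2(ii)]{K-S06} (for the opposite category), which is exactly the general statement you re-derive by induction on cohomological amplitude via the truncation triangle and the mapping-cone construction. The only small points to make explicit are that each application of Proposition~\ref{P:4} may require shrinking the neighbourhood of $(x,z)$ (harmless, since the induction is finite), and that the resulting isomorphism $\mathrm{Cone}(\varphi)\simeq\shm$ in the derived category is realized by a genuine chain map because $\mathrm{Cone}(\varphi)$ is a bounded complex of projectives.
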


We set $\Omega_{X\times S/S}=\Omega^{n}_{X\times S/S}$, where $\Omega^{n}_{X\times S/S}$ denotes the sheaf of relative differential forms of degree $n=\dim X$.

\begin{definition}\label{D:23} The duality functor
$\bD(\cdot): \rD^\rb(\shd_{X\times S/S}) \to \rD^\rb(\shd_{X\times S/S})$ is defined as:
\[
\shm\mapsto\bD\shm=\Rhom_{\shd_{X\times S/S}}(\shm, \shd_{X\times S/S}\otimes_{\sho_{X\times S}}\Omega_{X\times S/S}^{\otimes^{-1}})[n].
\]
We also set $\bD'\shm:=R\shh om_{\shd_{X\times S/S}}(\shm, \shd_{X\times S/S})\in \rD^{\rb}(\shd_{X\times S/S}^\opp)$.
\end{definition}

By Proposition \ref{P:4}, $\shd_{X\times S/S}$ has finite cohomological dimension, so \cite[(A.11)]{Kashiwara03} gives a natural morphism in $\rD^\rb(\shd_{X\times S/S})$:
\begin{equation}\label{E:D}
\shm\to \bD'\bD'\shm\simeq\bD\bD\shm.
\end{equation}
Moreover, in view of Corollary \ref{C:8}, if $\shm\in \rD^\rb_\coh(\shd_{X\times S/S})$, then $\bD'\shm \in \rD^\rb_\coh(\shd_{X\times S/S}^\opp).$
Indeed, we may choose a local free finite resolution $\shl^\sbullet$ of~$\shm$, so that $\bD'\shm$ is quasi isomorphic to the transposed complex $(\shl^\sbullet)^t$ whose entries are free.

By the same argument we deduce that \eqref{E:D} is an isomorphism whenever $\shm\in \rD^\rb_\coh(\shd_{X\times S/S})$.

Again by Proposition \ref{P:4}, $\shd_{X\times S/S}$ has finite flat dimension so we are in conditions to apply \cite[(A.10)]{Kashiwara03}: given $\shm, \shn \in \rD^\rb(\shd_{X\times S/S})$ there is a natural morphism:
\begin{equation}\label{E:DDD}
\bD'\shm \overset{L}{\otimes}_{\shd_{X\times S/S}}\shn \to \Rhom_{\shd_{X\times S/S}}(\shm,\shn)
\end{equation} which an isomorphism provided that $\shm$ or $\shn$ belong to $\rD^\rb_\coh(\shd_{X\times S/S})$.
When $\shm, \shn \in \rD^\rb_\coh(\shd_{X\times S/S})$, composing \eqref{E:DDD} with the biduality isomorphism \eqref{E:D} gives a natural isomorphism
\begin{equation}\label{E:DD}
\Rhom_{\shd_{X\times S/S}}(\shm,\shn)\simeq \Rhom_{\shd_{X\times S/S}}(\bD\shn, \bD\shm).
\end{equation}

\subsection{Characteristic variety}
Recall (see \cite[\S III.1.3]{Schapira85}) that the characteristic variety $\Char\shm$ of a coherent $\shd_{X\times S/S}$-module $\shm$ is the support in $T^*X\times S$ of its graded module with respect to any (local) good filtration. One has (see \cite[Prop.\,III.1.3.2]{Schapira85})
\begin{equation}\label{E:CHAR}
\begin{split}
\Char(\shd_{X\times S}\otimes_{\shd_{X\times S/S}} \shm)&=\pi^{-1}\Char\shm,\\
\Char\shm&=\pi\big(\Char(\shd_{X\times S}\otimes_{\shd_{X\times S/S}} \shm)\big).
\end{split}
\end{equation}

One may as well define the characteristic variety of an object $\shm\in \rD^\rb_\coh(\shd_{X\times S/S})$ as the union of the characteristic varieties of its cohomology modules. By the flatness of $\shd_{X\times S}$ over $\shd_{X\times S/S}$, \eqref{E:CHAR} holds for any object of $\rD^\rb_\coh(\shd_{X\times S/S})$.

\begin{proposition}[{\cite[Prop.\,2.5]{Sch-Sch94}}]\label{E:D5}
For $\shm\in \rD^\rb_\coh(\shd_{X\times S/S})$ we have
\[
\Char(\shm)=\Char (\bD \shm).
\]
\end{proposition}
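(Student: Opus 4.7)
The plan is to reduce the assertion to the absolute setting over $\shd_{X\times S}$ via extension of scalars, where the corresponding statement is classical. Using the second identity in \eqref{E:CHAR}, we have $\Char\shm = \pi\big(\Char(\shm^{\mathrm{abs}})\big)$ with $\shm^{\mathrm{abs}} := \shd_{X\times S}\otimes^L_{\shd_{X\times S/S}}\shm$, and similarly for $\bD\shm$. Hence the claim reduces to two substeps: (a) a compatibility $\bD(\shm^{\mathrm{abs}}) \simeq (\bD\shm)^{\mathrm{abs}}$ up to operations that do not affect the characteristic variety, and (b) the classical absolute version $\Char\shn = \Char(\bD\shn)$ for $\shn\in\rD^\rb_\coh(\shd_{X\times S})$.

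For (a), I would take a local finite free resolution $\shl^\sbullet\to\shm$ provided by Proposition \ref{P:4}. Because $\shd_{X\times S}$ is locally free as a right $\shd_{X\times S/S}$-module, generated (after choosing local coordinates $s^1,\dots,s^\ell$ on $S$) by the monomials $\partial_{s^1}^{\alpha_1}\cdots\partial_{s^\ell}^{\alpha_\ell}$, the functor $(-)\otimes_{\shd_{X\times S/S}}\shd_{X\times S}$ is exact; thus $\shl^\sbullet\otimes_{\shd_{X\times S/S}}\shd_{X\times S}$ is a free resolution of $\shm^{\mathrm{abs}}$. Computing $\bD'$ on both sides via the transposed complexes yields the identification $\bD'(\shm^{\mathrm{abs}}) \simeq \bD'(\shm)\otimes_{\shd_{X\times S/S}}\shd_{X\times S}$ of right $\shd_{X\times S}$-modules. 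Passing from $\bD'$ to $\bD$ on each side multiplies by $\Omega_{X\times S}^{\otimes -1}$ and $\Omega_{X\times S/S}^{\otimes -1}$ respectively; since $\Omega_{X\times S}\simeq \Omega_{X\times S/S}\otimes p_S^{-1}\Omega_S$ and the shifts differ by $\ell=\dim S$, the two sides agree up to a shift and a tensor with the invertible sheaf $p_S^{-1}\Omega_S^{\otimes\pm1}$, neither of which alters~$\Char$.

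For (b), choose (locally) a good filtration of $\shn$ with respect to the order filtration on $\shd_{X\times S}$. Transporting it along a free resolution and passing to the graded produces a complex over the commutative Noetherian regular ring $\gr\shd_{X\times S}\simeq\pi_*\sho_{T^*(X\times S)}$ whose cohomologies have the same support, by biduality on this regular ring (equivalently, $\supp\mathcal{E}xt^j(N,\gr\shd_{X\times S})\subseteq\supp N$ with equality achieved in the union thanks to the biduality isomorphism). This is the content of the classical absolute statement and may be quoted directly from \cite{Sch-Sch94}. Combining (a) and (b) and applying the projection $\pi$ gives $\Char\shm = \pi(\Char\shm^{\mathrm{abs}}) = \pi(\Char\bD(\shm^{\mathrm{abs}})) = \pi(\Char(\bD\shm)^{\mathrm{abs}}) = \Char(\bD\shm)$.

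The main obstacle is the bookkeeping in (a): precisely matching the left/right conversions performed by the canonical twists against the differing homological shifts ($n$ in $\bD_{\mathrm{rel}}$ versus $n+\ell$ in $\bD_{\mathrm{abs}}$), so that the two identifications differ only by operations invisible to $\Char$. Once this is settled, step (b) is the standard commutative-algebra computation on $T^*(X\times S)$ and the conclusion is formal.
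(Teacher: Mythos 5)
The paper gives no argument for this proposition at all---it is quoted verbatim from Schapira--Schneiders \cite[Prop.\,2.5]{Sch-Sch94}---so there is no ``paper proof'' to compare against. Your reduction to the absolute case is sound and is in essence the argument at the source. Formula \eqref{E:CHAR} (together with its right-module analogue, obtained via the side-change equivalence) reduces the claim to the equality of characteristic varieties under $\bD$ over $\shd_{X\times S}$; the adjunction
\[
\Rhom_{\shd_{X\times S}}\bigl(\shd_{X\times S}\otimes_{\shd_{X\times S/S}}\shm,\shd_{X\times S}\bigr)\simeq \Rhom_{\shd_{X\times S/S}}(\shm,\shd_{X\times S}),
\]
applied to a local finite free resolution (Proposition \ref{P:4}) and combined with the flatness of $\shd_{X\times S}$ over $\shd_{X\times S/S}$, gives the identification $\bD'(\shd_{X\times S}\otimes_{\shd_{X\times S/S}}\shm)\simeq \bD'\shm\otimes_{\shd_{X\times S/S}}\shd_{X\times S}$ of complexes of right $\shd_{X\times S}$-modules; and the absolute case is classical (Auslander regularity, equivalently pure codimensionality, of the graded ring $\pi_*\sho_{T^*(X\times S)}$). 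Applying $\pi$ then closes the loop.

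Two remarks on the execution. First, the ``main obstacle'' you flag at the end is not one: the characteristic variety is invariant under shifts and under the left/right side-change equivalence (twisting by an invertible $\sho$-module does not change the graded support), so you may run the whole computation with $\bD'$ and never open the bookkeeping involving $\Omega_{X\times S/S}$, $\Omega_{X\times S}$ and the shift $\ell=\dim S$; the identification above is already at the level of $\bD'$, where it is a clean adjunction, and $\Char(\bD\shm)=\Char(\bD'\shm)$ on the nose. Second, a small notational slip: since $\shd_{X\times S}\otimes_{\shd_{X\times S/S}}(-)$ tensors $\shd_{X\times S}$ on the \emph{left}, the exactness you need is that $\shd_{X\times S}$ is locally free as a \emph{right} $\shd_{X\times S/S}$-module (which you correctly state, with basis the $\partial_s$-monomials), but the functor should be written $\shd_{X\times S}\otimes_{\shd_{X\times S/S}}(-)$, not $(-)\otimes_{\shd_{X\times S/S}}\shd_{X\times S}$; this does not affect the argument.
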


\subsection{The de Rham and solution complexes}
For an object $\shm$ of $\rD^\rb(\shd_{X\times S/S})$ we define the functors
\begin{align*}
\DR\shm&:=\Rhom_{\shd_{X\times S/S}}(\sho_{X\times S},\shm),\\
\Sol\shm&:=\Rhom_{\shd_{X\times S/S}}(\shm,\sho_{X\times S})
\end{align*}
which take values in $\rD^{\rb}(p_X^{-1}\sho_S)$. If $\shm$ is a $\shd_{X\times S/S}$-module, that is, a $\sho_{X\times S}$-module equipped with an integrable relative connection $\nabla:\shm\to\Omega^1_{X\times S/S}\otimes\nobreak\shm$, the object $\DR\shm$ is represented by the complex $(\Omega^\sbullet_{X\times S/S}\otimes_{\sho_{X\times S}}\shm,\nabla)$.

Noting that $\Rhom_{\shd_{X\times S/S}}(\sho_{X\times S},\shd_{X\times S/S})\simeq \Omega_{X\times S/S}[-\dim X]$ we get
\[
\bD\sho_{X\times S}\simeq \sho_{X\times S}.
\]
For $\shn=\sho_{X\times S}$, \eqref{E:DD} implies a natural isomorphism, for $\shm\in \rD^\rb_\coh(\shd_{X\times S/S})$:
\begin{equation}\label{E:D3}
\Sol\shm\simeq \DR\bD\shm.
\end{equation}

\subsection{Holonomic $\shd_{X\times S/S}$-modules}
Let $\shm$ be a coherent $\shd_{X\times S/S}$-module. We say that it is \emph{holonomic} if its characteristic variety $\Char\shm\subset T^*X\times S$ is contained in $\Lambda\times S$ for some closed conic Lagrangian complex analytic subset of $T^*X$. We will say that a complex $\mu$-stratification $(X_\alpha)$ is adapted to $\shm$ if $\Lambda\subset\bigcup_\alpha T^*_{X_\alpha}X$. Similar definitions hold for objects of $\rD^\rb_\hol(\shd_{X\times S/S})$.

An object $\shm\in \rD^\rb_\coh(\shd_{X\times S/S})$ is said to be holonomic if its cohomology modules are holonomic. We denote the full triangulated category of holonomic complexes by $\rD^\rb_\hol(\shd_{X\times S/S})$.

\begin{corollary}[of Prop.\,\ref{E:D5}]\label{Cor:C3.6}
If $\shm$ is an object of $\rD^\rb_\hol(\shd_{X\times S/S})$, then so is $\bD\shm$.
\end{corollary}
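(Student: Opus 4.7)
The plan is to reduce the statement to Proposition~\ref{E:D5} together with the coherence already established for the duality functor. First I would verify that $\bD\shm$ remains an object of $\rD^\rb_\coh(\shd_{X\times S/S})$: this is essentially contained in the discussion following Definition~\ref{D:23}. Namely, Corollary~\ref{C:8} supplies, locally, a bounded resolution $\shl^\sbullet\to\shm$ by free $\shd_{X\times S/S}$-modules of finite rank; applying $\Rhom_{\shd_{X\times S/S}}(\cdot,\shd_{X\times S/S}\otimes_{\sho_{X\times S}}\Omega_{X\times S/S}^{\otimes^{-1}})[n]$ yields a bounded complex of free right $\shd_{X\times S/S}$-modules (up to the side-changing twist), whose cohomology modules are therefore $\shd_{X\times S/S}$-coherent. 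Hence $\bD\shm\in\rD^\rb_\coh(\shd_{X\times S/S})$.

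Next I would invoke Proposition~\ref{E:D5}, which yields the equality of characteristic varieties
\[
\Char(\bD\shm)=\Char(\shm).
\]
By the definition recalled in Section~3.2, each side is the finite union, over $j$, of the characteristic varieties of the cohomology modules $\shh^j(\bD\shm)$ and $\shh^j(\shm)$, respectively. Since $\shm$ is holonomic, for each $j$ there is a closed conic Lagrangian analytic subset $\Lambda_j\subset T^*X$ with $\Char(\shh^j(\shm))\subset\Lambda_j\times S$, so setting $\Lambda=\bigcup_j\Lambda_j$ (a finite union, hence again a closed conic Lagrangian analytic subset of $T^*X$), one gets $\Char(\shm)\subset\Lambda\times S$.

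Combining these two steps, $\Char(\shh^j(\bD\shm))\subset\Char(\bD\shm)=\Char(\shm)\subset\Lambda\times S$ for every $j$, so every cohomology module of $\bD\shm$ is holonomic in the sense of Section~3.4. This establishes $\bD\shm\in\rD^\rb_\hol(\shd_{X\times S/S})$.

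There is essentially no obstacle here beyond the careful bookkeeping between the characteristic variety of a complex and those of its cohomology modules; all the substantive content sits in Proposition~\ref{E:D5}, which in turn rests on the finite free resolutions provided by Proposition~\ref{P:4} and the classical argument of \cite{Sch-Sch94}.
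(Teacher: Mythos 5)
Your proof is correct and follows exactly the route the paper intends: the Corollary is stated as a consequence of Proposition~\ref{E:D5}, and your argument (coherence of $\bD\shm$ via Corollary~\ref{C:8}, the equality $\Char(\bD\shm)=\Char(\shm)$, and the passage from the characteristic variety of the complex to those of its cohomology modules) is precisely the bookkeeping the authors leave implicit.
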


\begin{theorem}\label{T:1}
Let $\shm$ be an object of $\rD^\rb_\hol(\shd_{X\times S/S})$. Then $\DR(\shm)$ and $\Sol\shm$ belong to $\rD^\rb_\cc(p^{-1}_X\sho_S)$.
\end{theorem}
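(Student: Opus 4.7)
The plan is to reduce to Kashiwara's classical (absolute) theorem pointwise via $Li^*_{s_o}$, combined with a relative micro-support estimate. By \eqref{E:D3} together with Corollary \ref{Cor:C3.6}, it suffices to treat $\DR\shm$; and by truncation inside the triangulated category $\rD^\rb_\cc(p_X^{-1}\sho_S)$, one may further assume that $\shm$ is a single holonomic $\shd_{X\times S/S}$-module concentrated in degree~$0$.

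The heart of the argument is to establish the micro-support bound
\[
SS(\DR\shm)\subset \Lambda\times T^*S,
\]
where $\Lambda\subset T^*X$ is a closed conic complex analytic Lagrangian with $\Char\shm\subset \Lambda\times S$. Locally I would take a finite free $\shd_{X\times S/S}$-resolution $\shl^\sbullet\to\shm$ (Corollary \ref{C:8}) and combine it with the relative Spencer resolution of $\sho_{X\times S}$ to realise $\DR\shm$ as the relative de Rham complex $(\Omega^\sbullet_{X\times S/S}\otimes_{\sho_{X\times S}}\shm,\nabla)$. The bound then follows from a relative version of Kashiwara's involutivity argument (in the spirit of \cite{Sch-Sch94}), the key point being that only relative differential operators appear in $\nabla$, so propagation in the $T^*S$-direction is unconstrained. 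Combined with the $\C^*$-conicity of $\Lambda$, Proposition \ref{P:6} and Properties \ref{Properties} then give $\DR\shm\in \rD^\rb_\wcc(p_X^{-1}\sho_S)$, adapted to any complex $\mu$-stratification of~$X$ subordinate to~$\Lambda$. \emph{This is the main obstacle}, being the relative analogue of Kashiwara's classical theorem; once in hand, the remaining work is either formal or pointwise.

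Finally, to promote from $\rD^\rb_\wcc$ to $\rD^\rb_\cc(p_X^{-1}\sho_S)$, by Remark \ref{R:0} and Properties \ref{Properties}\eqref{Properties2} it remains to verify that the stalks $(\DR\shm)|_{\{x\}\times S}$ belong to $\rD^\rb_\coh(\sho_S)$ for every $x\in X$. By Proposition \ref{P:5} applied with $\shn=\sho_{X\times S}$ one has $Li^*_{s_o}\DR\shm\simeq \DR(Li^*_{s_o}\shm)$ for every $s_o\in S$; since $\Char(Li^*_{s_o}\shm)\subset \Lambda$, the complex $Li^*_{s_o}\shm$ is $\shd_X$-holonomic, so Kashiwara's classical theorem yields $\DR(Li^*_{s_o}\shm)\in \rD^\rb_\cc(\C_X)$, and in particular its stalks at~$x$ have finite $\C$-dimensional cohomology in each degree. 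Combining this pointwise finiteness with the $S$-locally constant structure on strata from the second step, and using the local finite free resolution of $\shm$ together with a Nakayama-type argument in the spirit of Proposition \ref{P:91}, one lifts the pointwise $\C$-finiteness to $\sho_S$-coherence of $(\DR\shm)|_{\{x\}\times S}$, completing the proof.
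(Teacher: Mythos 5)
Your first two steps (reduction to $\DR$ of a single holonomic module in degree~$0$, and the micro-support estimate $SS(\DR\shm)\subset\Lambda\times T^*S$) match the paper, though you treat the micro-support bound as the main obstacle while the paper simply invokes \cite[Prop.\,11.3.3]{K-S90} together with \cite[p.\,11 \& Th.\,2.13]{Sch-Sch94}; that bound is available and needs no new proof. The genuine gap is in your final coherence step.

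You propose to prove that $(\DR\shm)|_{\{x\}\times S}\in\rD^\rb_\coh(\sho_S)$ by combining the pointwise $\C$-finiteness of $Li^*_{s_o}\DR\shm$ (which is correct, via Proposition \ref{P:5} and the absolute Kashiwara theorem) with the $S$-locally constant structure on strata, and then ``lift'' via a Nakayama-type argument in the spirit of Proposition \ref{P:91}. But Proposition \ref{P:91} \emph{assumes} that the stalks $\shh^j(F)_{(x,s)}$ are of finite type over $\sho_{S,s}$; it is a uniqueness device, not a finiteness-producing one. Pointwise $\C$-finiteness of the fibers $Li^*_{s_o}G$ does not imply $\sho_S$-coherence of a complex $G$, and neither does $S$-local constancy on the strata $X_\alpha$ with $\dim X_\alpha>0$ tell you anything coherence-wise about the stratum $\{x\}$ where the stalk lives. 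The finite free $\shd_{X\times S/S}$-resolution of $\shm$ does not help either: it exhibits $\DR\shm$ as a bounded complex of sheaves of the shape $\Omega^j_{X\times S/S}\otimes\shm$, which are far from $p^{-1}\sho_S$-coherent.

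What actually supplies the missing finiteness in the paper is an analytic compactness input that your proposal does not use. One first identifies, for a point-stratum $x$ of an adapted stratification and $\epsilon$ small, $i_x^{-1}\DR\shm\simeq Rp_{\ov\epsilon,*}(\C_{\ov B_\epsilon\times S}\otimes_\C\DR\shm)$ via Lemma \ref{L:2.7} (a proper direct image over a closed ball), and then observes that $(\C_{\ov B_\epsilon\times S},\shm)$ is a relative elliptic pair in the sense of \cite{Sch-Sch94}, whose Proposition~4.1 gives $\sho_S$-coherence of the direct image. This relative-ellipticity/finiteness theorem (the analogue of Cartan--Serre finiteness in this setting) is the essential ingredient your argument lacks; without it, there is no way to get from fiberwise $\C$-finiteness to coherence over $\sho_S$.
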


\begin{proof}
Firstly, it follows \cite[Prop.\,11.3.3]{K-S90}, that $\Sol(\shm)$ and $\DR(\shm)$ have their micro-support contained in $\Lambda\times T^*S$ (see \cite[p.\,11 \& Th.\,2.13]{Sch-Sch94}) and, according to Proposition \ref{P:6}, these complexes are objects of $\rD^\rb_\wcc(p^{-1}_X\sho_S)$.

Let $x\in X$. In order to prove that $i_x^{-1}\DR\shm$ has $\sho_S$-coherent cohomology, we can assume that $x$ is a stratum of a stratification adapted to $\DR\shm$ and we use Lemma \ref{L:2.7} to get $i_x^{-1}\DR\shm\simeq Rp_{\ov\epsilon,*}(\C_{\ov B_\epsilon\times S}\otimes_\C\DR\shm)$ for $\epsilon$ small enough, where $\ov B_\epsilon$ is a closed ball of radius $\epsilon$ centered at $x$. One then remarks that $(\C_{\ov B_\epsilon\times S},\shm)$ forms a relative elliptic pair in the sense of \cite{Sch-Sch94}, and Proposition 4.1 of loc.\,cit.\ gives the desired coherence.

The statement for $\Sol\shm$ is proved similarly.
\end{proof}

\begin{lemma}[{see \cite[Prop.\,1.2.5]{Sabbah05}}]\label{L:istar}
For $\shm$ in $\rD^\rb_\hol(\shd_{X\times S/S})$ with adapted stratification $(X_\alpha)$ and for any $s_o\in S$, $Li^*_{s_o}\shm$ is $\shd_X$-holonomic and $(X_\alpha)$ is adapted to it.
\end{lemma}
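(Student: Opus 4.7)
The plan is to reduce to a single holonomic module, to compute $Li^*_{s_o}\shm$ locally by a Koszul complex, and finally to transfer the characteristic-variety bound from $\shd_{X\times S/S}$ to $\shd_X$. The reduction step is straightforward: since the characteristic variety of an object of $\rD^\rb_\hol(\shd_{X\times S/S})$ is by definition the union of those of its cohomology modules, and similarly for $\shd_X$, applying $Li^*_{s_o}$ to the truncation triangles $\tau^{\leq k}\shm\to\shm\to\tau^{>k}\shm\xrightarrow{+1}$ and taking the resulting long exact sequences reduces the statement to the case where $\shm$ is a single holonomic $\shd_{X\times S/S}$-module, for which the adapted stratification $(X_\alpha)$ is inherited.

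Working locally on $S$ with holomorphic coordinates $(z^1,\dots,z^\ell)$ vanishing at $s_o$, $Li^*_{s_o}\shm$ is then computed by the Koszul complex $K_\sbullet(z;\shm)$, whose terms are finite direct sums of copies of $\shm$. As $\shd_{X\times S/S}$ is coherent, each cohomology module $\shn_k:=\shh^{-k}(Li^*_{s_o}\shm)$ is $\shd_{X\times S/S}$-coherent as a subquotient of $K_{-k}$, and the standard good-filtration argument yields $\Char_{\shd_{X\times S/S}}(\shn_k)\subset\Char\shm\subset\Lambda\times S$. Moreover $\shn_k$ is annihilated by the ideal generated by the $z^i$, since these lie in the centre $p_X^{-1}\sho_S$ of $\shd_{X\times S/S}$ and kill $p_X^{-1}(\sho_S/\mathfrak{m}_{s_o})$. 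Identifying the quotient $\shd_{X\times S/S}/(z^1,\dots,z^\ell)$ with $\shd_X$ in this chart, each $\shn_k$ thus becomes a coherent $\shd_X$-module.

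The main technical point, which I expect to be the crux, is the comparison between the $\shd_{X\times S/S}$- and $\shd_X$-characteristic varieties of $\shn_k$. A good $\shd_X$-filtration of $\shn_k$ is automatically a good $\shd_{X\times S/S}$-filtration, because the central elements $z^i$ act by zero and hence trivially preserve every filtration step; the two associated gradeds then coincide as a single sheaf supported on $T^*X\times\{s_o\}$, viewed either over $\mathrm{gr}\,\shd_X$ or over $\mathrm{gr}\,\shd_{X\times S/S}$. This identifies $\Char_{\shd_X}(\shn_k)$ with the image of $\Char_{\shd_{X\times S/S}}(\shn_k)\cap(T^*X\times\{s_o\})$ under the canonical identification $T^*X\times\{s_o\}\simeq T^*X$, whence $\Char_{\shd_X}(\shn_k)\subset\Lambda$. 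This proves simultaneously the $\shd_X$-holonomicity of $Li^*_{s_o}\shm$ and the fact that $(X_\alpha)$ is adapted to it.
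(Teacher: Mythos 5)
The paper gives no proof of this lemma: it simply refers the reader to \cite[Prop.\,1.2.5]{Sabbah05}, which treats the analogous statement for a single $\shr_X$-module. Your argument is a correct and self-contained reconstruction, and it also supplies the two small supplements that the bare citation leaves implicit, namely the d\'evissage from a complex to a single module via truncation triangles, and the treatment of the higher Koszul homology that arises because $\shm$ is not assumed to be $p^{-1}\sho_S$-flat.

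Two points in your write-up are stated a bit loosely but lead to correct conclusions, and you may want to tighten them. First, the assertion that each $\shn_k=\shh^{-k}(Li^*_{s_o}\shm)$ is killed by $\mathfrak{m}_{s_o}$ is correct, but the justification you give (``the $z^i$ kill $p^{-1}(\sho_S/\mathfrak{m}_{s_o})$'') is not quite a proof; the clean reason is either that multiplication by $z^i$ on $K_\sbullet(z;\shm)$ is null-homotopic by the standard Koszul contraction (which is $p^{-1}\sho_S$-linear, hence $\shd_{X\times S/S}$-linear), or simply that $Li^*_{s_o}$ lands in $\rD^\rb(\shd_X)$ because $\sho_S/\mathfrak{m}_{s_o}$ is an $\sho_S/\mathfrak{m}_{s_o}$-algebra. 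Second, to pass from ``$\shn_k$ is $\shd_{X\times S/S}$-coherent and $\mathfrak{m}_{s_o}$-torsion'' to ``$\shn_k$ is $\shd_X$-coherent'' it is cleanest to tensor a local finite presentation over $\shd_{X\times S/S}$ with $p^{-1}(\sho_S/\mathfrak{m}_{s_o})$, producing a finite presentation over $\shd_X$; once that is in place your comparison of good filtrations (a good $\shd_X$-filtration is a good $\shd_{X\times S/S}$-filtration because $\mathrm{gr}^F\shn_k$, being killed by the central $z^i$, is coherent over $\mathrm{gr}\,\shd_{X\times S/S}$ iff it is over $\mathrm{gr}\,\shd_X$) is exactly right and gives $\Char_{\shd_X}(\shn_k)\subset\Lambda$ as claimed. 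No gap.
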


\begin{corollary}\label{C:H1}
For $\shm\in \rD^\rb_\hol(\shd_{X\times S/S})$, there is a natural isomorphism $\bD'\Sol \shm\simeq\DR\shm$.
\end{corollary}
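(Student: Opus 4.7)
The strategy is to construct a natural morphism $\DR\shm\to\bD'\Sol\shm$ in $\rD^\rb(p_X^{-1}\sho_S)$ and then verify that it is an isomorphism by reduction, via the functors $Li^*_{s_o}$ for $s_o\in S$, to the classical (absolute) identification $\DR_XN\simeq\bD'_{\C_X}\Sol_XN$ known to hold for every $N\in\rD^\rb_\hol(\shd_X)$.

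To build the morphism, I plan to invoke the relative holomorphic Poincar\'e lemma in the form $\Rhom_{\shd_{X\times S/S}}(\sho_{X\times S},\sho_{X\times S})\simeq p_X^{-1}\sho_S$, and then compose morphisms to obtain the canonical pairing
\[
\DR\shm\overset{L}{\otimes}_{p_X^{-1}\sho_S}\Sol\shm\longrightarrow p_X^{-1}\sho_S,
\]
whose adjoint is the desired arrow $\DR\shm\to\bD'\Sol\shm$.

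Both terms lie in $\rD^\rb_\cc(p_X^{-1}\sho_S)$: the source by Theorem~\ref{T:1}, and the target because $\Sol\shm$ is there by the same theorem and $\bD'$ differs from the duality of Proposition~\ref{P:12} only by the shift built into $\omega_{X,S}=p_X^{-1}\sho_S[2\dim X]$, hence also preserves $\rD^\rb_\cc(p_X^{-1}\sho_S)$. In particular, the cohomology stalks of both sides are finitely generated over $\sho_{S,s}$ at every point, so Proposition~\ref{P:91} applies and the claim reduces to showing that $Li^*_{s_o}$ of the morphism is an isomorphism in $\rD^\rb(\C_X)$ for every $s_o\in S$.

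For that, Proposition~\ref{P:5}, combined with $Li^*_{s_o}\sho_{X\times S}\simeq\sho_X$, yields canonical identifications $Li^*_{s_o}\DR\shm\simeq\DR_X(Li^*_{s_o}\shm)$ and $Li^*_{s_o}\Sol\shm\simeq\Sol_X(Li^*_{s_o}\shm)$, while Proposition~\ref{C:CC} gives $Li^*_{s_o}\bD'\Sol\shm\simeq\bD'_{\C_X}\Sol_X(Li^*_{s_o}\shm)$. Lemma~\ref{L:istar} ensures that $Li^*_{s_o}\shm\in\rD^\rb_\hol(\shd_X)$, and the absolute biduality recalled at the outset then provides the desired isomorphism. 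The main technical point I foresee is to check that, once pulled back along $Li^*_{s_o}$, our natural pairing really reproduces the classical pairing underlying the absolute statement; this amounts to the functoriality of Yoneda composition together with the compatibility of the relative and absolute holomorphic Poincar\'e lemmas, both routine but in need of careful verification.
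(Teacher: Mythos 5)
Your proposal is correct and follows the same route as the paper: construct the natural morphism $\DR\shm\to\bD'\Sol\shm$ from the canonical pairing, then apply $Li^*_{s_o}$, use Propositions~\ref{P:5} and~\ref{C:CC} together with Lemma~\ref{L:istar} to reduce to the absolute biduality for holonomic $\shd_X$-modules, and conclude by Proposition~\ref{P:91}. Your extra care in justifying that both sides lie in $\rD^\rb_\cc(p_X^{-1}\sho_S)$ (so that the finiteness hypothesis of Proposition~\ref{P:91} holds) and in flagging the compatibility of the relative and absolute pairings under specialization are reasonable points which the paper leaves implicit.
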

\begin{proof}
We consider the canonical pairing
\[
\DR\shm\overset{L}{\otimes}_{p^{-1}_X\sho_S}\Sol \shm\to p_X^{-1}\sho_S
\]
which gives a natural morphism
\[
\DR\shm\to\bD'\Sol \shm
\]
in $\rD^\rb_{\cc}(p_X^{-1}\sho_S)$.
We have for each $s_o\in S$, by Proposition \ref{P:5}
\begin{align*}
Li^*_{s_o}(\DR\shm)&\simeq \DR Li^*_{s_o}(\shm),\\
Li^*_{s_o}(\Sol \shm)&\simeq \Sol Li^*_{s_o}(\shm) .
\end{align*}

Since $Li^*_{s_o}(\shm)\in \rD^\rb_\hol(\shd_X)$ by Lemma \ref{L:istar}, we have
\[
\DR Li^*_{s_o}(\shm)\simeq \bD'\Sol Li^*_{s_o}(\shm),
\]
so by Proposition \ref{P:5} and Proposition \ref{C:CC}
\[
\bD'\Sol Li^*_{s_o}(\shm)\simeq \bD'Li^*_{s_o}(\Sol \shm)\simeq Li^*_{s_o}(\bD'\Sol \shm).
\]
The assertion then follows by Proposition \ref{P:91}.
\end{proof}

In the following proposition, the main argument is that of strictness, which is essential. We will set $\pDR\shm:=\DR\shm[\dim X]$ and $\pSol\shm=\Sol\shm[\dim X]$.

\begin{proposition}\label{P:3.3}
Let $\shm$ be a holonomic $\shd_{X\times S/S}$-module which is strict, i.e., which is $p^{-1}\sho_S$-flat. Then $\pDR\shm$ satisfies the support condition \eqref{eq:support} with respect to a $\mu$-stratification adapted to~$\shm$.
\end{proposition}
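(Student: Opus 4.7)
Fix a $\mu$-stratification $(X_\alpha)$ adapted to $\shm$, a stratum $X_\alpha$, and a point $x\in X_\alpha$. Set $n=\dim X$ and $d_\alpha=\dim X_\alpha$. Condition \eqref{eq:support} for $\pDR\shm=\DR\shm[n]$ amounts to the vanishing
\[
\shh^k i_x^{-1}\DR\shm = 0 \quad\text{for all } k>n-d_\alpha.
\]
By Theorem \ref{T:1}, $G:=i_x^{-1}\DR\shm$ lies in $\rD^\rb_{\lccoh}(p_{\{x\}}^{-1}\sho_S)$, so its cohomology sheaves are $\sho_S$-coherent. The plan is to reduce to Kashiwara's classical perversity theorem by applying $Li^*_{s_o}$ for each $s_o\in S$, and then to propagate the resulting pointwise vanishing to $G$ itself via Nakayama's lemma, using this $\sho_S$-coherence.

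For the reduction, the strictness of $\shm$ gives $Li^*_{s_o}\shm=i^*_{s_o}\shm$ concentrated in degree $0$; since $\sho_{X\times S}$ is $p^{-1}\sho_S$-flat, Proposition \ref{P:5} applied to the pair $(\sho_{X\times S},\shm)$ yields an isomorphism
\[
Li^*_{s_o}\DR\shm \simeq \DR(i^*_{s_o}\shm).
\]
By Lemma \ref{L:istar}, $i^*_{s_o}\shm$ is a holonomic $\shd_X$-module to which $(X_\alpha)$ is still adapted, so Kashiwara's theorem asserts that $\DR(i^*_{s_o}\shm)[n]$ is $\C$-perverse; in particular $(\DR i^*_{s_o}\shm)_x$ is concentrated in degrees $\leq n-d_\alpha$. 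A direct stalk computation shows that the two restrictions commute, namely
\[
(Li^*_{s_o}\DR\shm)_x \;\simeq\; (\DR\shm)_{(x,s_o)}\otimes^L_{\sho_{S,s_o}}\C \;\simeq\; G_{s_o}\otimes^L_{\sho_{S,s_o}}\C.
\]
Consequently, for every $s_o\in S$ the fiber $G_{s_o}\otimes^L_{\sho_{S,s_o}}\C$ has cohomology vanishing in degrees $>n-d_\alpha$.

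Finally, suppose for contradiction that $k_0>n-d_\alpha$ is the largest index with $\shh^{k_0}G\ne0$. Then $\shh^{k_0}G$ is a nonzero coherent $\sho_S$-module, and choosing any point $s_o$ of its support, Nakayama's lemma gives $\shh^{k_0}(G)_{s_o}\otimes_{\sho_{S,s_o}}\C\ne0$. But because $k_0$ is the top cohomology degree of $G$, this module coincides with $\shh^{k_0}\bigl(G_{s_o}\otimes^L_{\sho_{S,s_o}}\C\bigr)$ (no higher $\mathrm{Tor}$ contributes to the top degree), contradicting the pointwise vanishing established above. Hence $\shh^k G=0$ for all $k>n-d_\alpha$, as required. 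The main subtlety of the argument lies in the use of strictness: it is precisely what allows one to identify $Li^*_{s_o}\shm$ with an honest holonomic $\shd_X$-module, so that the classical Kashiwara perversity theorem can be invoked fiberwise; without strictness, the left-derived restriction could produce cohomology in negative degrees and the reduction would fail.
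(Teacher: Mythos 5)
Your proof is correct, but it takes a genuinely different route from the paper's. The paper argues by induction on $\dim S$: locally on $S$ one picks a coordinate $s$ with zero locus $S'$; strictness yields a short exact sequence $0\to\shm\xrightarrow{\,s\,}\shm\to i_{S'}^*\shm\to 0$ with $i_{S'}^*\shm$ again strict holonomic relative to $S'$; applying $i_x^{-1}\pDR$ and passing to the long exact cohomology sequence shows, via the inductive vanishing for $i_{S'}^*\shm$, that multiplication by $s$ is onto on the top nonvanishing $\shh^k i_x^{-1}\pDR\shm$, and one concludes by Theorem \ref{T:1} (coherence) and Nakayama. You instead pass in one step from $S$ to a point $\{s_o\}$ via $Li^*_{s_o}$, using Proposition \ref{P:5} (together with $p^{-1}\sho_S$-flatness of $\sho_{X\times S}$ and strictness of $\shm$) to identify $Li^*_{s_o}\DR\shm\simeq\DR(i^*_{s_o}\shm)$, invoke Kashiwara's classical perversity theorem on the fiber $i^*_{s_o}\shm$ (holonomic with the same adapted stratification by Lemma \ref{L:istar}), and then propagate the resulting pointwise vanishing back to $G=i_x^{-1}\DR\shm$ using the fact that for a bounded complex with coherent cohomology the top cohomology commutes with $\otimes^L_{\sho_{S,s_o}}\C$, again followed by Nakayama. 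Both arguments share the same engine — strictness, $\sho_S$-coherence from Theorem \ref{T:1}, and Nakayama — and both ultimately rest on Kashiwara's theorem as the base case; the paper's induction only ever divides by a single coordinate and therefore never needs the observation that derived restriction preserves top cohomology, whereas your argument replaces the induction by that (correct) observation. Your version is a bit more direct and makes explicit exactly where the absolute perversity theorem enters.
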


\begin{proof}
We prove the result by induction on $\dim S$. Since it is local on $S$, we consider a local coordinate $s$ on $S$ and we set $S'=\{s=0\}$. The strictness property implies that we have an exact sequence
\[
0\to\shm\xrightarrow{~s~}\shm\to i_{S'}^*\shm\to0,
\]
and $i_{S'}^*\shm$ is $\shd_{X\times S'/S'}$-holonomic and $p^{-1}\sho_{S'}$-flat. We deduce an exact sequence of complexes $0\to\pDR\shm\xrightarrow{s}\pDR\shm\to\pDR i_{S'}^*\shm\to0$.

Let $X_\alpha$ be a stratum of a $\mu$-stratification of $X$ adapted to $\shm$ (hence to $i_{S'}^*\shm$, after Lemma \ref{L:istar}). For $x\in X_\alpha$, let $k$ be the maximum of the indices~$j$ such that $\shh^ji_x^{-1}\pDR\shm\neq0$. For any $S'$ as above, we have a long exact sequence
\[
\cdots\to\shh^ki_x^{-1}\pDR\shm\xrightarrow{~s~}\shh^ki_x^{-1}\pDR\shm\to\shh^ki_x^{-1}\pDR i_{S'}^*\shm\to0.
\]
If $k>-\dim X_\alpha$, we have $\shh^ki_x^{-1}\pDR i_{S'}^*\shm=0$, according to the support condition for $i_{S'}^*\shm$ (inductive assumption), since $(X_\alpha)$ is adapted to it. Therefore, $s:\shh^ki_x^{-1}\pDR\shm\to\shh^ki_x^{-1}\pDR\shm$ is onto. On the other hand, by Theorem~\ref{T:1}, $\shh^ki_x^{-1}\pDR\shm$ is $\sho_S$-coherent. Then Nakayama's lemma implies that $\shh^ki_x^{-1}\pDR\shm=0$ in some neighbourhood of $S'$. Since~$S'$ was arbitrary, this holds all over $S$, hence the assertion.
\end{proof}

\begin{proof}[Proof of Theorem \ref{th:main2}]
It is a direct consequence of the following.

\begin{theorem}\label{T:3.4}
Let $\shm$ be an object of $\rD^\rb_\hol(\shd_{X\times S/S})$ and let $\bD\shm$ be the dual object. Then there is an isomorphism $\pDR\bD\shm\simeq\bD\pDR\shm$.
\end{theorem}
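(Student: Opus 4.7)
The strategy is to identify both $\pDR\bD\shm$ and $\bD\pDR\shm$ with the same object $\bD'\DR\shm[n]$, where $n=\dim X$ and $\bD' F := \Rhom_{p_X^{-1}\sho_S}(F, p_X^{-1}\sho_S)$. Since $\omega_{X,S} = p_X^{-1}\sho_S[2n]$ and $\DR\shm$ belongs to $\rD^\rb_\cc(p_X^{-1}\sho_S)$ by Theorem~\ref{T:1}, a direct unwinding of the definitions gives
\[
\bD\pDR\shm = \Rhom_{p_X^{-1}\sho_S}\bigl(\DR\shm[n],\,p_X^{-1}\sho_S[2n]\bigr) \simeq \bD'\DR\shm[n].
\]

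To match the other side, I would chain two ingredients already at our disposal. First, \eqref{E:D3} applied to $\bD\shm$, combined with the $\shd$-module biduality isomorphism \eqref{E:D} (valid on $\rD^\rb_\coh$, hence on $\rD^\rb_\hol$), yields $\Sol\bD\shm \simeq \DR\bD\bD\shm \simeq \DR\shm$. Second, since $\bD\shm$ is again holonomic by Corollary~\ref{Cor:C3.6}, Corollary~\ref{C:H1} applied to $\bD\shm$ gives $\bD'\Sol\bD\shm \simeq \DR\bD\shm$. Substituting the first relation into the second produces the key identification $\bD'\DR\shm \simeq \DR\bD\shm$, and shifting by $[n]$ yields $\bD\pDR\shm \simeq \pDR\bD\shm$, as desired.

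The bulk of the real work is thus already hidden inside Corollary~\ref{C:H1}, whose own proof reduces the corresponding statement to the classical $\shd_X$-case via $Li^*_{s_o}$ and Proposition~\ref{P:91}. I do not expect any essential additional obstacle here beyond careful bookkeeping of shift conventions, noting that on the constructible side one has $\bD = \bD'[2n]$, whereas on the $\shd$-module side the shift $[n]$ is already built into Definition~\ref{D:23}. Should a canonical morphism (not merely an abstract isomorphism) be desired, one can instead construct it from the natural pairing $\DR\shm \overset{L}{\otimes}_{p_X^{-1}\sho_S} \Sol\shm \to p_X^{-1}\sho_S$ used in the proof of Corollary~\ref{C:H1}, and verify that it becomes an isomorphism after applying $Li^*_{s_o}$ for each $s_o\in S$, thanks to Propositions~\ref{P:5}, \ref{C:CC} and Lemma~\ref{L:istar}.
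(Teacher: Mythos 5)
Your proof is correct, and it follows a genuinely more economical route than the paper's. The paper constructs a fresh natural morphism $\pDR\bD\shm\to\bD\pDR\shm$ by combining \eqref{E:DD} with the adjunction-type formula of \cite[Ex.\,II.24(iv)]{K-S90}, and then shows this specific map becomes an isomorphism after applying $Li^*_{s_o}$, invoking the local duality theorem for holonomic $\shd_X$-modules together with Propositions~\ref{C:CC}, \ref{P:5} and \ref{P:91}. You instead observe that the entire reduction to the classical case is already packaged in Corollary~\ref{C:H1}: applying it to $\bD\shm$, and composing with \eqref{E:D3} (read backwards via $\shd$-module biduality \eqref{E:D}) and the unwinding $\bD\pDR\shm=\bD'\DR\shm[n]$, yields the desired isomorphism as a chain of previously established natural isomorphisms, with no new appeal to $Li^*_{s_o}$ or \cite[Ex.\,II.24(iv)]{K-S90}. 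What the paper's approach buys is an explicit single morphism of duality type whose isomorphy is then checked fibrewise; what your approach buys is brevity and modularity once Corollary~\ref{C:H1} is in hand. Since the theorem only asserts the existence of an isomorphism — and since later uses (notably Theorem~\ref{th:main2}) require nothing more — your shortcut is legitimate, and your closing remark correctly identifies how one would recover a canonical morphism should the need arise.
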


Indeed, with the assumptions of Theorem \ref{th:main2}, $\bD\shm$ is holonomic since~$\shm$ is so (see Corollary \ref{Cor:C3.6}), and both $\shm$ and $\bD\shm$ are strict. Then both $\pDR\shm$ and $\pDR\bD\shm$ satisfy the support condition, according to Proposition \ref{P:3.3}. Hence, according to Theorem \ref{T:3.4} and Proposition \ref{P:227}, $\pDR\shm$ satisfies the cosupport condition.

Similarly, $\pSol\shm\simeq\bD\pDR\shm$ and $\bD(\pSol\shm)\simeq\pDR\shm$ both satisfy the support condition, hence $\Sol\shm[\dim X]$ is a perverse object.
\end{proof}

\begin{proof}[Proof of Theorem \ref{T:3.4}]
Combining \eqref{E:DD} with \cite[Ex.\,II.24\,(iv)]{K-S90} (with $f=\id$, $\sha=\shd_{X\times S/S}$ and $\shb=p_X^{-1}\sho_S$) entails, for any $\shn\in \rD^\rb_\coh(\shd_{X\times S/S})$, a natural morphism $$\Rhom_{\shd_{X\times S/S}}(\shn,\shm)\to \Rhom_{p_X^{-1}\sho_S}(\DR\bD\shm, \DR\bD\shn).$$
When $\shn=\sho_{X\times S}$, we obtain a natural morphism
$$ \DR\shm\to \bD'\DR\bD\shm,\quad\text{that is,}\quad \pDR\shm\to \bD\pDR\bD\shm.$$
Suppose now that $\shm\in \rD^\rb_\hol(\shd_{X\times S/S})$. Recall that $\bD\shm\in \rD^\rb_\hol(\shd_{X\times S/S})$, so $\pDR\bD\shm\in \rD^\rb_\cc(p^{-1}_X\sho_S)$.

Hence, by biduality, we get a morphism
\begin{equation}\label{E:i}
\bD\pDR\shm\leftarrow\pDR\bD\shm.
\end{equation}

On the other hand, since $Li^*_{s_o}(\shm)\in \rD^\rb_\hol(\shd_X)$ for each $s_o\in S$, the morphisms above induce isomorphisms
$$Li^*_{s_o}(\bD\pDR\shm)\simeq \pDR\bD Li^*_{s_o}(\shm)$$
according to Proposition \ref{C:CC} and Proposition \ref{P:5}, where in the right hand side we consider the duality for holonomic $\shd_X$-modules. Thus \eqref{E:i} is an isomorphism by Proposition \ref{P:91} and the local duality theorem for holonomic $\shd_X$-modules (see \cite{Narvaez04} and the references given there).
\end{proof}

\begin{example}
Let $X$ be the open unit disc in $\C$ with coordinate $x$ and let~$S$ be a connected open set of $\C$ with coordinate $s$. Let $\varphi:S\to\nobreak\C$ be a non constant holomorphic function on $S$ and consider the holonomic $\shd_{X\times S/S}$-module $\shm=\shd_{X\times S/S}/\shd_{X\times S/S}\cdot P$, with $P=x\partial_x-\varphi(s)$. It is easy to check that $\shm$ has no $\sho_S$-torsion and admits the resolution $0\to\nobreak\shd_{X\times S/S}\xrightarrow{{}\cdot P}\nobreak\shd_{X\times S/S}\to\nobreak\shm\to\nobreak0$, so that the dual module $\bD\shm$ has a similar presentation and is also $\sho_S$-flat. The complex $\pSol\shm$ is represented by $0\to\sho_{X\times S}\xrightarrow{P\cdot{}}\sho_{X\times S}\to0$ (terms in degrees $-1$ and $0$). Consider the stratification $X_1=X\setminus\{0\}$ and $X_0=\{0\}$ of $X$. Then $\shh^{-1}\pSol\shm_{|X_1}$ is a locally constant sheaf of free $p^{-1}_X\sho_S$-modules generated by a local determination of~$x^{\varphi(s)}$, and $\shh^0\pSol\shm_{|X_1}=0$. On the other hand, $\shh^{-1}\pSol\shm_{|X_0}=0$ and $\shh^0\pSol\shm_{|X_0}$ is a skyscraper sheaf on $X_0\times S$ supported on $\{s\in S\mid\varphi(s)\in\Z\}$.

For each $x_0$ we have
\begin{multline*}
{{i_{x_0}^{!}}} (\pSol\shm)\\
\simeq i^{-1}_{\{x_0\}\times S}\Rhom_{\shd_{X\times S}}(\shd_{X\times S}\otimes_{\shd_{X\times S/S}}\shm, R\Gamma_{\{x_0\}\times S|X\times S}\sho_{X\times S})[\dim X]\\
\simeq i^{-1}_{\{x_0\}\times S}\Rhom_{\shd_{X\times S}}(\shd_{X\times S}\otimes_{\shd_{X\times S/S}}\shm, B_{\{x_0\}\times S|X\times S})
\end{multline*}
where $B_{\{x_0\}\times S|X\times S}:=\shh^1_{[\{x_0\}\times S]}(\sho_{X\times S})$ denotes the sheaf of holomorphic hyperfunctions (of finite order) along $x=x_0$ (cf.~\cite{S-K-K73}). The second isomorphism follows from the fact that $\shd_{X\times S}\otimes_{\shd_{X\times S/S}}\shm$ is regular specializable along the submanifold $x=x_0$ (cf.~\cite{L-MF88}). 

Recall that the sheaves $B_{\{x_0\}\times S|X\times S}$ are flat over $p_X^{-1}\sho_S$ because locally they are inductive limits of free $p_X^{-1}\sho_S$-modules of finite rank.
 
Since ${i_{x_0}^{!}} (\pSol\shm)$ is quasi isomorphic to the complex 
$$0\to B_{\{x_0\}\times S|X\times S}|_{\{x_0\}\times S}\xrightarrow{~P\cdot{}~}B_{\{x_0\}\times S|X\times S}|_{\{x_0\}\times S}\to 0$$ it follows that the flat dimension over $\sho_S$ of ${{i_{x_0}^{!}}} (\pSol\shm)$ in the sense of \cite[\S4]{Kashiwara04} is $\leq 0$ for any $x_0$. Moreover, $\shh^0{i_{x_0}^{!}} (\pSol\shm)=0$ and, if $x_0\neq\nobreak0$, $\shh^1{i_{x_0}^{!}} (\pSol\shm)$ is a locally free $\sho_S$-module of rank $1$. Hence the flat dimension of ${i_{x_0}^{!}} (\pSol\shm)$ is $\leq 1$. This shows explicitly that $\pSol\shm$ satisfies the condition \eqref{eq:cosupportplus} of Corollary \ref {C:tmf2}.
\end{example}

\section{Application to mixed twistor $\shd$-modules}
Let $\shr_{X\times\C}$ be the sheaf on $X\times\C$ of $z$-differential operators, locally generated by $\sho_{X\times\C}$ and the $z$-vector fields $z\partial_{x_i}$ in local coordinates $(x_1,\dots,x_n)$ on~$X$. When restricted to $X\times\C^*$, the sheaf $\shr_{X\times\C^*}$ is isomorphic to $\shd_{X\times\C^*/\C^*}$.

A mixed twistor $\shd$-module on $X$ (see \cite{Mochizuki11}) is a triple $\sht=(\shm',\shm'',C)$, where $\shm',\shm''$ are holonomic $\shr_{X\times\C}$-modules and $C$ is a certain pairing with values in distributions, that we will not need to make precise here. Such a triple is subject to various conditions. We say that a $\shd_{X\times\C^*/\C^*}$-module $\shm$ underlies a mixed twistor $\shd$-module $\sht$ if $\shm$ is the restriction to $X\times\C^*$ of~$\shm'$ or $\shm''$.

Theorem \ref{th:main} is now a direct consequence of the following properties of mixed twistor $\shd$-modules, since they imply that $\shm$ satisfies the assumptions of Theorem \ref{th:main2}. If $\shm$ underlies a mixed twistor $\shd$-module, then
\begin{itemize}
\item
there exists a locally finite filtration $W_\sbullet\shm$ indexed by $\Z$ by $\shr_{X\times\C}$-submodules such that each graded module underlies a pure polarizable twistor $\shd$-module; then each $\mathrm{gr}_\ell^W\shm$ is strict and holonomic (see \cite[Prop.\,4.1.3]{Sabbah05} and \cite[\S17.1.1]{Mochizuki08}), and thus so is $\shm$;
\item
the dual of $\shm$ as a $\shr_{X\times\C^*}$-module also underlies a mixed twistor $\shd$-module, hence is also strict holonomic (see \cite[Th.\,12.9]{Mochizuki11}); using the isomorphism $\shr_{X\times\C^*}\simeq\shd_{X\times\C^*/\C^*}$, we see that the dual $\bD\shm$ as a $\shd_{X\times\C^*/\C^*}$-module is strict and holonomic.\qed
\end{itemize}

\newcommand{\SortNoop}[1]{}\def\cprime{$'$}
\providecommand{\bysame}{\leavevmode\hbox to3em{\hrulefill}\thinspace}
\providecommand{\MR}{\relax\ifhmode\unskip\space\fi MR }
\providecommand{\MRhref}[2]{%
  \href{http://www.ams.org/mathscinet-getitem?mr=#1}{#2}
}
\providecommand{\href}[2]{#2}

\end{document}